\documentclass[12pt, a4paper, parskip=half, abstracton]{scrartcl}

\usepackage{array}
\usepackage{marginnote}
\usepackage{xcolor}
\usepackage{amscd,amssymb,amsfonts,amsmath,latexsym,amsthm}
\usepackage[all,cmtip]{xy}
\usepackage{mathrsfs}
\usepackage{graphicx}
\usepackage{bm} 

\usepackage{etoolbox}

\def\hB{\hspace*{\fill}$\qed$}

\usepackage[nottoc]{tocbibind} 
\usepackage{thmtools}
\usepackage{defs_pp1}
\usepackage{slashed}
\usepackage[utf8]{inputenc}
\usepackage{microtype}
\usepackage[english]{babel}
\usepackage{mathtools}
\usepackage{esvect}

\usepackage{hyperref}

\usepackage[nameinlink, capitalise]{cleveref}

\title{Products in $KK$- and $E$-theory }
\author{
Ulrich Bunke\thanks{Fakult{\"a}t f{\"u}r Mathematik,
Universit{\"a}t Regensburg,
93040 Regensburg,
ulrich.bunke@mathematik.uni-regensburg.de}  and 
Benjamin Dünzinger  \thanks{Fakult{\"a}t f{\"u}r Mathematik,
Universit{\"a}t Regensburg,
93040 Regensburg,
benjamin\-.duenzinger\-@mathe\-matik\-.uni-regensburg.de}
}

\numberwithin{equation}{section}
\newtheorem{theorem}{Theorem}[section] 
\newtheorem{prop}[theorem]{Proposition}
\newtheorem{lem}[theorem]{Lemma}

\newtheorem{ddd}[theorem]{Definition}
\newtheorem{kor}[theorem]{Corollary}

\theoremstyle{remark}
\theoremstyle{definition}

\newtheorem{ex}[theorem]{Example}
\newtheorem{rem}[theorem]{Remark}

\newcommand{\ee}{\mathrm{e}}

\newcommand{\fin}{\mathrm{fin}}

\newcommand{\EE}{\mathrm{E}}
\newcommand{\sepa}{\mathrm{sep}}

\newcommand{\nCalg}{C^{*}\mathbf{Alg}^{\mathrm{nu}}}

\newcommand{\Fib}{{\mathrm{Fib}}}

\newcommand{\cK}{\mathcal{K}}

\newcommand{\bA}{{\mathbf{A}}}

 \newcommand{\Cat}{{\mathbf{Cat}}}

\renewcommand{\hom}{\mathtt{hom}}

\renewcommand{\Pr}{\mathbf{Pr}}

\newcommand{\perf}{\mathrm{perf}}

\newcommand{\kk}{\mathrm{kk}}
\newcommand{\KK}{\mathrm{KK}}

\newcommand{\st}{\mathrm{st}}

 \newcommand{\homol}{\mathrm{homol}}
\newcommand{\lex}{\mathrm{lex}}

 \renewcommand{\Mod}{\mathrm{Mod}}
 
 \newcommand{\Nuc}{\mathbf{Nuc}}
 \newcommand{\bbK}{\mathbb{K}}
 \newcommand{\restr}{\mathrm{restr}}

\begin{document}  \maketitle 
  
   \begin{abstract} 
   {In this  note, we give an explicit description of countable products in $KK$- and $E$-theory and  provide several applications.}
   \end{abstract} 
  \tableofcontents

  \section{Introduction}
  
  The classical bivariant abelian group-valued $KK$-theory functor for separable {$C^{*}$-}algebras  was introduced in   \cite{kasparovinvent}; an alternative picture was provided in  \cite{MR899916}. 
  In  \cite{higsondiss} {and}   \cite{MR2193334} $KK$-theory   was interpreted and  characterized as
  a functor taking values in an    additive or a triangulated category, respectively. 
  The 
  additive category-valued 
  $E$-theory functor for separable $C^{*}$-algebras was introduced in  \cite{MR1068250}. Alternative models were given in  \cite{zbMATH04182148} and \cite{Guentner_2000}.
 {Recently, 
 these classical  $E$- and $KK$-theory functors  were refined to  functors   with values in stable $\infty$-categories   \cite{LN}, \cite{KKG}, \cite{bunke:2023aa}, \cite{budu}, \cite{Datta:2025aa}.} 

{In the present paper, we propose a new definition of $E$- and $KK$-theory. To this end, in   \cref{koptrhertgertget},  we introduce  the notions of $E$- and $KK$-homological functors, and in 
  \cref{hrerhrgterhet}  we define $E$- or $KK$-theory equivalences as morphisms between $C^{*}$-algebras that are inverted by all  $E$- or $KK$-homological functors, respectively.} 
 {\begin{ddd}[\cref{hrteoptrhertg2}]\label{okhperthtrege} We define the $E$- and $KK$-theory functors
  \begin{equation}\label{vdfsvsdfvsdfvsdf}\ee:\nCalg\to \EE\ , \quad \kk:\nCalg\to \KK
\end{equation}  as the    Dwyer-Kan localizations that  invert
  all $E$-   or $KK$-theory equivalences, respectively.  \end{ddd} }
 {The precise relation with previous definitions will be explained in \cref{jtzrhzrthrh} below.}
  
 {Besides the obvious universal properties formulated in \cref{zjpoztjzhrtzh}, we show in \cref{erherzhgrt9} that
  the functors in \eqref{vdfsvsdfvsdfvsdf} are themselfes $E$- or $KK$-homological, and that they satisfy the universal property \begin{align*}
 \ee^{*}&:\Fun^{\colim}(\EE,\cC)\stackrel{\simeq}{\to} \Fun^{E-\homol}(\nCalg,\cC)\ ,\\
  \kk^{*}&:\Fun^{\colim}(\KK,\cC)\stackrel{\simeq}{\to} \Fun^{KK-\homol}(\nCalg,\cC)\end{align*}
  for any cocmplete stable $\infty$-category $\cC$; see \cref{iogjoegwerfwerfwerf}. 
  In view of \cref{okpherthertgertg}, after going to homotopy categories and restricting to separable $C^{*}$-algebras, the
  functors defined in \cref{okhperthtrege} become equivalent to the classical, triangulated category-valued $E$- or $KK$-theory functors.}
  
      As a consequence of {the definition of $\ee$ and $\kk$ as Dwyer-Kan localizations}, every object in the categor{ies} $\EE$ or $\KK$ is represented by some $C^{*}$-algebra. 
Previously, it was only  known that  $\aleph_{1}$-compact objects  can be represented by separable $C^{*}$-algebras.
Since the Dwyer-Kan localizations {in} \cref{okhperthtrege} are governed by   calculi of fractions, we can further conclude that also every morphism in $\EE$ or $\KK$   is represented by some morphism between $C^{*}$-algebras.

Infinite products of non-trivial $C^{*}$-algebras are never separable. Taking advantage of the fact that
the $E$- and $KK$-theory functors are now defined on all $C^{*}$-algebras, we can study the question
to which extend the   $E$- and $KK$-theory functors preserve infinite products.
We show by example, that $\ee$ and $\kk$ do not preserve all  infinite products.
In contrast, note that $\ee$ and $\kk$ preserve arbitrary coproducts; see \cref{jigowrefwerfgw}.
On the positive side,  an immediate consequence of \cref{okphjzhzjrtjzthr} is:

\begin{theorem}\label{hrrthregrt}
If $(A_{i})_{i\in I}$ is a countable family of $\bbK$-stable $C^{*}$-algebras, then the comparison maps
$$\ee(\prod_{i\in I}A_{i})\to \prod_{i\in I}\ee(A_{i})\ , \qquad  \kk(\prod_{i\in I}A_{i})\to \prod_{i\in I}\kk(A_{i}) $$
are equivalences.
\end{theorem}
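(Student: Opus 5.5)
The plan is to reduce the statement to a representability computation. Since $\EE$ and $\KK$ are stable and every object is represented by a $C^{*}$-algebra, it suffices to show that the comparison map induces an equivalence on mapping spectra out of an arbitrary object $\ee(B)$ (resp.\ $\kk(B)$). Concretely, we must show that
$$\map_{\EE}\bigl(\ee(B),\ee(\textstyle\prod_{i}A_{i})\bigr)\to \prod_{i}\map_{\EE}\bigl(\ee(B),\ee(A_{i})\bigr)$$
is an equivalence for all $B$, and similarly for $\kk$. Equivalently, the functor $A\mapsto \map(\ee(B),\ee(A))$, restricted to $\bbK$-stable algebras, should send countable products to products.

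The main input should be the description of mapping spaces via the calculus of fractions, which I expect is the content of \cref{okphjzhzjrtjzthr}. For $\bbK$-stable targets, I would aim to identify $\pi_{0}\map(\ee(B),\ee(A))$ with homotopy classes of asymptotic morphisms $S^{2}B\otimes\bbK \to A$ (or a suspended variant). For $\kk$, I would instead use Cuntz pairs or quasihomomorphisms $qB\to A$ up to homotopy, and analogously for the higher homotopy groups after replacing $A$ by $C_{0}(\mathbb{R}^{n},A)$.

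With such a model in hand, the product statement becomes essentially formal on the level of representatives. A family of asymptotic morphisms or quasihomomorphisms into the $A_{i}$ assembles into one into $\prod_{i}A_{i}$, and the same holds for homotopies. Here $\bbK$-stability is used so that no further stabilization of the target is needed. That stabilization would otherwise be the obstruction, since $(\prod A_{i})\otimes\bbK\neq\prod(A_{i}\otimes\bbK)$. We also use $C_{0}(\mathbb{R}^{n},\prod A_{i})\cong\prod C_{0}(\mathbb{R}^{n},A_{i})$ only in the sense that maps from separable $B$ factor appropriately. For the asymptotic version, uniform continuity conditions must be handled; I would choose a reparametrisation of each component to make the family equicontinuous, in the spirit of the standard argument that asymptotic morphisms can be reparametrised. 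Countability of $I$ enters exactly at this step, for choosing such reparametrisations or a common approximate unit, and in the Milnor $\lim^{1}$-type bookkeeping passing from $\pi_{0}$ to the whole spectrum.

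The key steps, in order, are:
\begin{enumerate}
\item Reduce to mapping spectra from representable objects.
\item Invoke the concrete description of the mapping spectra into $\bbK$-stable algebras.
\item Show that assembling componentwise representatives gives surjectivity on homotopy groups.
\item Show that componentwise homotopies assemble, giving injectivity.
\end{enumerate}

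I expect the main obstacle to be the third and fourth steps in the $E$-theory case. There, componentwise asymptotic morphisms need not assemble into a single asymptotic morphism without uniformity in $i$. I would handle this by a diagonal reparametrisation argument using countability of $I$. If $B$ is non-separable, I would first use that $\ee(B)$ is a filtered colimit of separable pieces, which reduces everything to separable $B$.
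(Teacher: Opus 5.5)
\textbf{There is a genuine gap.} Your reduction to mapping spectra out of $\ee(B)$ with $B$ separable is fine. The gap is your claim that $\bbK$-stability of the factors makes a further stabilisation of the target unnecessary.

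The classical models you want to use (asymptotic morphisms, Cuntz's $qB$-picture) compute $\pi_{0}\map(\ee(B),\ee(A))$ via maps into $A\otimes\bbK$. Dropping the $\otimes\bbK$ is justified only for $\bbK$-stable $A$, and a countable product of $\bbK$-stable algebras need not be $\bbK$-stable; for instance $\prod_{\nat}\bbK$ is not. To see this: if $D$ is $\bbK$-stable, then $M(D)$ contains pairwise orthogonal, mutually equivalent projections $E_{k}$ with $\sum_{k}E_{k}=1$ strictly. In $M(\prod_{\nat}\bbK)=\prod_{\nat}B(L^{2}(\nat))$ all components of each $E_{k}$ are then non-zero. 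Choose a unit vector $\xi_{n}$ in the range of the $n$-th component of $E_{n}$; then $d=(|\xi_{n}\rangle\langle\xi_{n}|)_{n}$ satisfies $\|(1-\sum_{k\le m}E_{k})d\|=1$ for all $m$.

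So for $\prod_{i}A_{i}$ the correct target is $(\prod_{i}A_{i})\otimes\bbK$. This is a proper subalgebra of $\prod_{i}(A_{i}\otimes\bbK)\cong\prod_{i}A_{i}$, which is where your assembled representatives and homotopies live. You must therefore control the product of the corner inclusions $\prod_{i}A_{i}\to\prod_{i}(A_{i}\otimes\bbK)$. That is precisely the non-formal point of the theorem, and your proposal assumes it away.

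Three further problems:
\begin{enumerate}
\item Step 4 fails as stated. Homotopies into the individual $A_{i}$ assemble to a homotopy into $\prod_{i}A_{i}$ only if they are equicontinuous in the homotopy parameter, uniformly in $i$. Reparametrising the asymptotic variable $t$ does nothing about this, and in the Cuntz picture there is no $t$ at all.
\item Homomorphisms from separable algebras into $\prod_{i}C_{0}(\mathbb{R}^{n},A_{i})$ need not factor through $C_{0}(\mathbb{R}^{n},\prod_{i}A_{i})$. You should put the suspensions on the source instead, after which no $\lim^{1}$ issue arises.
\item Already for $B=\C$ your statement is \cref{giwoerpgwerferfw}, a non-trivial analytic theorem whose proof has to supply exactly such uniform control. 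For general separable $B$ you would need a bivariant version of it, which the proposal does not provide. Relatedly, your reparametrisations could be chosen factor by factor for any index set, so countability plays no real role in your argument. This indicates the real difficulty has not been located.
\end{enumerate}

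The paper handles the missing point formally rather than analytically.
\begin{itemize}
\item \cref{kopherthertgergertg} shows that for countable $I$ the functor $\prod_{i\in I}(-\otimes\bbK)$ preserves $E$- and $KK$-equivalences. It preserves homotopies by using the same homotopy in every factor, which sidesteps your equicontinuity problem. It sends corner inclusions to homotopy equivalences, preserves (semi-split) exact sequences, and, by countability, preserves $\aleph_{1}$-filtered colimits. Hence its composite with any $E$-homological functor is restricted $E$-homological. By \cref{zjoptzjrthzth} transported along \cref{okpherthertgertg}, such a composite inverts $E$-equivalences.
\item A retract argument through $\prod_{j}\bigoplus_{i}(A_{i}\otimes\bbK)$ extends this to families (\cref{hertgergetrbert}). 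So the stable product descends to $\EE$ and $\KK$.
\item An explicit unit and counit with the diagonal then show that the descended functor is the categorical product (\cref{okphjzhzjrtjzthr}). The triangle identities reduce to the stable product of corner inclusions being an equivalence.
\end{itemize}
Since $\prod_{i}A_{i}\cong\prod_{i}(A_{i}\otimes\bbK)$ for $\bbK$-stable $A_{i}$, the statement follows. The only input from the classical analytic models is, via \cref{okpherthertgertg}, the automatic countable sum theorem \cref{kopherthertgertg}.
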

Recall{,} that $A$ in $\nCalg$ is $\bbK$-stable if there exists an isomorphism $A\otimes \bbK\cong A$.

We have  right Bousfield localizations \begin{equation}\label{gwerfwerrewgwergerwgwe}
b^{E}:{\Mod_{KU}(\Sp)}\rightleftarrows \EE:K^{E}\ , \qquad    b^{KK}:{\Mod_{KU}(\Sp)}\rightleftarrows \KK:K^{KK}\ ,
\end{equation}
where $K^{E}$ and $K^{KK}$ are the functors corepresented by $\ee(\C)$ or $\kk(\C)$, respectively.
The essential images of $b^{E}$ and $b^{KK}$ are called the bootstrap classes in $\EE$ and $\KK$.
They are  the localizing subcategories  generated by $\ee(\C)$ or $\kk(\C)$, respectively. It is known that the bootstrap classes do not exhaust $\EE$ or $\KK$ (this follows, e.g., from \cite[Prop. 3.85 (1) and (2)]{budu}). This fact raises the question of understanding the Verdier quotients
$$\EE/\Mod_{KU}(\Sp)\ , \qquad \KK/\Mod_{KU}(\Sp)\ .$$ 
By identifying the bootstrap classes with  $\Mod_{KU}(\Sp)$, we can consider them as understood.  The following
result then allows to construct interesting objects in these quotients by forming infinite products of objects in the bootstrap classes.
\begin{theorem}[\cref{hopkethretgertg}.\ref{hopkethretgertg1}]
The $E$- and $KK$-theoretic bootstrap classes are not closed under forming infinite products.
\end{theorem}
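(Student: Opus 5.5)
Our plan is to produce a countable family of bootstrap objects whose product in $\EE$ (resp. $\KK$) lies outside the bootstrap class. By \cref{hrrthregrt} this product is represented by the $C^{*}$-algebra $\prod_{n}A_{n}$ for $\bbK$-stable $A_{n}$. We take $A_{n}=\bbK$ for all $n\in\nat$, so that $\prod_{n}\ee(\bbK)\simeq\prod_{n}\ee(\C)$ is represented by $\ell^{\infty}(\nat,\bbK)$; similarly in $\KK$. The bootstrap class is the essential image of the fully faithful left adjoint $b^{E}$ of the right Bousfield localization \eqref{gwerfwerrewgwergerwgwe}. Hence an object $X$ lies in it if and only if the counit $b^{E}K^{E}(X)\to X$ is an equivalence. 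Equivalently, $X$ lies in it if and only if $\map(X,Y)\simeq 0$ for every $Y$ with $K^{E}(Y)\simeq 0$, i.e.\ for every $Y$ with vanishing topological $K$-theory.

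We write $X=\prod_{n}\ee(\C)$. Then $K^{E}(X)\simeq\prod_{n}KU$, because $K^{E}$ is a right adjoint and commutes with products. So we must show that the canonical map $b^{E}(\prod_{n}KU)\to\prod_{n}b^{E}(KU)$ is not an equivalence. Equivalently, $b^{E}$ does not preserve this product. To detect the failure we look for a test functor $F:\EE\to\cC$ that is colimit preserving (in the sense of \cref{iogjoegwerfwerfwerf}, i.e.\ coming from an $E$-homological functor) and for which the comparison maps differ. The simplest candidate is $F=K^{E}(-\otimes D)$ for a suitable $C^{*}$-algebra $D$. On the bootstrap side, $F(b^{E}M)\simeq M\otimes_{KU}K(D)$, which is computed by tensoring. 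On the other side, $F(X)=K(\ell^{\infty}(\nat,\bbK)\otimes D)$.

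We choose $D$ so that these differ. We take $D$ to be a (non-nuclear, or at least non-separable) algebra for which the tensor product does not commute with the infinite product at the level of $K$-theory. Alternatively, and more robustly, we take $D=\bigoplus_{k}\bbK$ or a mapping telescope like the UHF-type colimit computing $K(-)\otimes\Q$. Then $F(b^{E}\prod KU)=(\prod_{n}KU)\otimes\Q$, whereas $F(X)=K(\ell^{\infty}(\bbK)\otimes M_{\Q})$. We compute the latter using continuity of $K$-theory: it is $\colim_{k}K(\ell^{\infty}(\bbK))$, which agrees. So rationalization does not distinguish the two. Instead we use a coefficient $C^{*}$-algebra with a non-compact dual, e.g.\ $D=\prod_{m}M_{m}$ or $C^{*}_{r}$ of a property (T)/non-exact group, exploiting that the minimal tensor product $\ell^{\infty}\otimes_{\min}D$ is strictly smaller than $\ell^{\infty}(D)$. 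The key computation is then to exhibit a $K$-theory class of $\ell^{\infty}(\bbK\otimes D)$ (e.g.\ a sequence of projections of unbounded rank in $\prod M_{m}$) that is not in the image from the algebraic tensor product. This would show that $\map(X,Y)\neq 0$ for some $K$-acyclic $Y$ built as a fiber.

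An alternative route, if the above gets messy, is to find directly a nonzero $Y$ with $K^{E}(Y)=0$ (these exist by \cite[Prop. 3.85]{budu}, e.g.\ a cone on a non-exactness defect $C^{*}_{\max}\to C^{*}_{r}$) and a countable family $Y_{n}$ of suspensions or copies of such building blocks. We would then show $\map(\prod_{n}\ee(\C),Y)\neq 0$ by producing a morphism from $\ell^{\infty}(\bbK)$ assembled from infinitely many maps $\bbK\to Y$-representatives. This uses that morphisms in $\EE$ are represented by $C^{*}$-morphisms via the calculus of fractions. The main obstacle in either route is the same: a concrete nonvanishing computation distinguishing $\prod_{n}KU$ tensored with a $K$-acyclic datum from the corresponding $C^{*}$-algebraic product. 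We expect the approach via projections of growing rank in $\prod_m M_m$ to be the most tractable first attempt. The $KK$ case runs verbatim, using $b^{KK}$, $K^{KK}$ and the $KK$ part of \cref{hrrthregrt}.
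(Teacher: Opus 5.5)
Your reduction is sound and is in substance the paper's. Testing with $F=K^{E}(-\otimes_{\max}D)$ turns the question into whether the K\"unneth map $K^{E}(X)\otimes_{KU}K(D)\to K(\prod_{\nat}\bbK\otimes_{\max}D)$ for $X=\prod_{\nat}\ee(\C)\simeq\ee(\prod_{\nat}\bbK)$ is an equivalence. The paper argues exactly this way, via the inclusion of the bootstrap class into the max-K\"unneth class, with $D=\prod_{\nat}\bbK$. (In $E$-theory you must use $\otimes_{\max}$: for non-exact $D$ the functor $K(-\otimes_{\min}D)$ is not exact.)

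However, your proposal stops where the real content begins. You never produce a class outside the image of the K\"unneth map, nor an invariant that would certify this, and your hints point the wrong way:
\begin{itemize}
\item The K\"unneth map is an equivalence whenever $D$ lies in the bootstrap class, which is why your rational test had to fail. So $D$ must itself be outside the bootstrap class.
\item The class has to live in $K$ of the completed tensor product $\prod_{\nat}\bbK\otimes_{\max}D$. Classes in $K(\ell^{\infty}(\nat,\bbK\otimes D))$ are irrelevant. That the tensor product is smaller than $\ell^{\infty}(\nat,\bbK\otimes D)$ is the obstacle, not a tool: ``diagonal'' sequences such as $(e_{n}\otimes f_{n})_{n}$ of rank-one projections are typically not in the tensor product.
\item Projections of unbounded rank in $\prod_{m}M_{m}$ only give classes in $K_{0}(D)$. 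External products of these with classes of $\prod_{\nat}\bbK$ lie in the K\"unneth image by construction.
\item Your alternative route has no mechanism either. If $K^{E}(Y)\simeq 0$, every map $\ee(\C)\to Y$ is zero, so a nonzero map $\prod_{\nat}\ee(\C)\to Y$ would have to vanish on $\bigoplus_{\nat}\ee(\C)$. It cannot be assembled from factorwise data.
\end{itemize}

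The missing idea is \cref{jzlptzrjtzhztrhr}, going back to Ozawa. Choose a finitely generated property (T) group $G$ with pairwise non-isomorphic finite-dimensional irreducible unitary representations $(V_{n},\rho_{n})_{n\in\nat}$. Push the Kazhdan projection $P\in C^{*}_{\max}(G)$ along $g\mapsto\bar\pi(g)\otimes\pi(g)$ into $\prod_{m}\bbK(\bar V_{m})\otimes_{\max}\prod_{n}\bbK(V_{n})\to\prod_{\nat}\bbK\otimes_{\max}\prod_{\nat}\bbK$. By Schur's lemma, the $(m,n)$-component is the projection onto the $G$-invariants of $\bar V_{m}\otimes V_{n}$, which has rank one for $m=n$ and is zero otherwise.

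Detection is via the maps $p_{m}\otimes_{\max}p_{n}$, which assemble to a homomorphism $\pi_{0}K(\prod_{\nat}\bbK\otimes_{\max}\prod_{\nat}\bbK)\to\prod_{\nat\times\nat}\Z$. Since $\pi_{1}\prod_{\nat}KU=0$, the Tor terms vanish and $\pi_{0}$ of the K\"unneth domain is $\prod_{\nat}\Z\otimes\prod_{\nat}\Z$. Its image consists of finite-rank integer matrices, while the Kazhdan class maps to the identity matrix. Without this, or some equivalent construction of a genuinely non-K\"unneth class, your argument does not prove the statement.
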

More concretely, we will show in \cref{okhpertertger} that 
 $\prod_{\nat} \ee(\C)$ and
$\prod_{\nat} \kk(\C)$ do not belong to the respective bootstrap class.
To this end, we use the symmetric monoidal structure associated with the maximal tensor product $\otimes_{\max}$ and the failure of the Künneth formula. 
{The constructions going into the proof of \cref{okhpertertger}  lead to   \cref{opjwefgioherjgkljsdfg} asserting  that the functor
 $$K^{E}:=\map_{\EE}(\ee(\C),-):\EE\to \Mod_{KU}(\Sp)$$
does not preserve all compact maps. {For a recollection of the definition of compact maps and their relevance, we refer to \cref{koprtzehrthrteeh}.}
}

{In the context of localizing motives,  tools and results analogous to the ones in the present note were previously developed by Ramzi--Sosnilo--Winges  \cite{RSW} and Efimov \cite{Efimov_2025}.  We recall that $ \Cat^{\perf} $ is the category of idempotent complete stable categories, and that the category of localizing motives $\mathrm{Mot}_{\mathrm{loc}}$ is the codomain of the initial finitary and bifiber sequence preserving functor with a stable and presentable codomain
$$ U_{\mathrm{loc}} \colon \Cat^{\perf} \to \mathrm{Mot}_{\mathrm{loc}} \ .$$
 The  formal arguments used to show
\cite[Theorem 1.9]{RSW}, stating that the functor $  U_{\mathrm{loc}}  $ preserves countable products, are essentially the same as for  verifying \cref{hrrthregrt} of the present note.} 

 We now describe the contents of this note in order.
 In \cref{khopeertgrtgertge}, we recall various definitions of properties of functors on $C^{*}$-algebras.
 In \cref{khopeertgrtgertge1}, we introduce the notions of $E$- and $KK$-homological functors, the notions of $E$- and $KK$-theory equivalences, and we define the functors in  \eqref{vdfsvsdfvsdfvsdf} by Dwyer-Kan localization; see \cref{hrteoptrhertg2}.
In \cref{jokgkpherthrtgrtgerg}, we recall the constructions of the $E${-} and $KK$-theory functors from 
\cite{bunke:2023aa} which we temporarily call classical; see \cref{hrteoptrhertg1}. In \cref{hrtegertghthrerth}, we
observe that they are examples of $E$- or $KK$-homological functors, respectively. 
In \cref{hkoppertrgertger}, we verify  that
the functors in \eqref{vdfsvsdfvsdfvsdf}  are  $E$- or $KK$-homological functors, respectively; see \cref{erherzhgrt9}. 
In \cref{okpherthertgertg}, we then show, by comparing universal properties, that the functors in  \eqref{vdfsvsdfvsdfvsdf} 
coincide with their classical versions  from \cref{hrteoptrhertg1}. Note that 
this comparison depends on the automatic countable sum theorem \cref{kopherthertgertg}, {which} is
(the only) non-formal result in this note involving the (very) classical analytic models for $E$- or $KK$-theory.
In \cref{kopnerthrtghrgertg} we first show that $\ee$ and $\kk$ do not preserve all
products; see \cref{gwjiogergwerfwerff}. We then introduce the stabilized
product functor \eqref{gewrgwfrewf}, show that it descends to $E$- or $KK$-theory, respectively, and
that the descended functors represent the respective categorical products; see \cref{okphjzhzjrtjzthr}.
The argument employs two different universal properties of the $E$- and $KK$-theory functors. On the one hand, we use that they are Dwyer-Kan localizations at $E${-} or $KK$-theory equivalences admitting calculi of fractions.
On the other hand{,} we use, via  their equivalence with the classical versions, the universal property
\cref{zjoptzjrthzth}.
In Section \cref{zlpherhtrhrgert}, we show that the bootstrap class of $\KK$ is not closed under products; see \cref{hopkethretgertg}{, and  finally in  \cref{koprtzehrthrteeh}, we show \cref{opjwefgioherjgkljsdfg}  asserting that the $K$-theory functor
on $E$-theory does not preserve all compact maps.}

%
%
%

  {\em Acknowledgement: A coversation with Gemini helped to find the present form of the statement in \cref{jzlptzrjtzhztrhr},
  using the maximal tensor product instead of the minimal one. U.B. further thanks Andreas Thom of a discussion about this topic.
 }

\section{Properties of functors on $C^{*}$-algebras}\label{khopeertgrtgertge}

In this section, we recall well-known properties of functors defined on the category of $C^{*}$-algebras.

The category $\nCalg$ of $C^{*}$-algebras is $\aleph_{1}$-presentable.  Its
  full subcategory $\nCalg_{\sepa}$ of separable $C^{*}$-algebras coincides with  its
 subcategory of $\aleph_{1}$-compact objects.  In particular, we have an equivalence
 $$\colim:\Ind_{\aleph_{1}}(\nCalg_{\sepa})\stackrel{\simeq}{\to} \nCalg\ .$$ The   inverse sends $A$ in $\nCalg$ to
 its $\aleph_{1}$-filtered system $(A'\subseteq_{s} A)$ of separable algebras.

The category $\nCalg$ is pointed by the zero algebra $0$.
Let $\cC$ be any $\infty$-category.  
\begin{ddd}
A functor $F:\nCalg\to \cC$ is called reduced if $F(0)$ is a zero object.
\end{ddd}

The category $\nCalg$ is tensored over the category $\Nuc_{\sepa}$ of nuclear separable $C^{*}$-algebras.
We will denote this tensor structure  by $$-\otimes - :\nCalg\times \Nuc_{\sepa} \to \nCalg\ .$$
For every second-countable compact Hausdorff space $W$ we have $C(W)$ in $\Nuc_{\sepa}$ and a map
$\epsilon_{W}:\C\to C(W)$ sending $\lambda$ in $\C$ to the constant function with value $\lambda$.  

\begin{ddd}
A functor $F:\nCalg\to \cC$ is homotopy invariant if{,} for every $A$ in $\nCalg${,} the map $F(A)\simeq F(A\otimes \C)\xrightarrow{F(\id_{A}\otimes \epsilon_{[0,1]})} F(A\otimes C([0,1]))$ is an equivalence.
\end{ddd}

The algebra of compact operators $\bbK$  on the Hilbert space $L^{2}(\nat)$  belongs to $\Nuc_{\sepa}$.
A minimal projection $p$ in $\bbK$ induces, for every  $A$ in $\nCalg$, a left upper coner inclusion
$$A\to A\otimes \bbK\ , \quad a\mapsto a\otimes p\ .$$

\begin{ddd}
A functor $F:\nCalg\to \cC$ is called $\bbK$-stable if it sends left upper corner inclusions to equivalences.
\end{ddd}

  An exact sequence of $C^{*}$-algebras   is a bifibre sequence 
 $$ \xymatrix{A\ar[r]\ar[d] &B \ar[d]^{p} \\0 \ar[r] &C } $$ in $\nCalg$, {i.e.,  {the} square  is simultaneously a pushout and a pullback.
 It is called  semi-split exact, if $p$  admits a completely positive contractive (cpc) linear right-inverse $C\to B$.

 \begin{ddd}
 A functor
 $F:\nCalg\to \cC$ is called exact   (semi-exact) if it is reduced and sends exact   (semi-split exact) sequences
 to bifibre sequences.
 \end{ddd}

 If $(A_{i})_{i\in I}$ is a  family in $\nCalg$, then we define its sum by 
 $$\bigoplus_{I}A_{i}:=\colim_{F\subseteq I} \prod_{i\in F} A_{i}\ ,$$
 where the colimit ranges over the finite subsets $F$ of $I$ and the structure maps are given by the canonical inclusions given by extension of families by  zero.
  
All these notions have versions for separable algebras. In this case,  we only consider   countable sums.

\begin{ddd}
A functor
$F:\nCalg\to \cC$ is called $s$-finitary if it is equivalent to the left-Kan extension of its restriction
to $\nCalg_{\sepa}$ along the inclusion $\nCalg_{\sepa}\to \nCalg$.
\end{ddd}

 {This entails that a functor $F:\nCalg\to \cC$ is $s$-finitary if and only if it preserves all $\aleph_1$-filtered colimits{, or equivalently, if} the canonical map
\[ 
	\colim_{A^\prime \subseteq_s A } F(A^\prime)  \to F(A)
\]
is an equivalence for all $C^\ast$-algebras $A$.}

\section{$E${-} and $KK$-homological functors}\label{khopeertgrtgertge1}

In this section we introduce the notions of homological functors and equivalences for $E${-} and $KK$-theory.

We consider a cocomplete stable $\infty$-category  $\cC$.

\begin{ddd} \label{koptrhertgertget}\begin{enumerate}
\item
 A functor $F:\nCalg\to \cC$ is called $E$-homological if
it is homotopy invariant, $\bbK$-stable, exact and preserves countable sums and $\aleph_{1}$-filtered colimits.
 \item A functor $F:\nCalg\to \cC$ is called $KK$-homological if
it is homotopy invariant, $\bbK$-stable, semi-exact and preserves  countable sums and $\aleph_{1}$-filtered colimits.
 \end{enumerate}
\end{ddd}

\begin{rem}
The only difference between $E${-} and $KK$-theory {lies} in the exactness property. \hB
\end{rem}

\begin{rem}
One can show that an $E$-homological functor preserves all filtered colimits. In fact, exactness and countable sum preservation implies preservation of $\nat$-indexed colimits \cite[Prop. 2.6]{MR2193334}, \cite[Prop. 3.17]{budu}. {Furthermore, it is a general fact that a functor that preserves  $\nat$-indexed and $\aleph_1$-indexed filtered colimits, also preserves all filtered colimits \cite[\href{https://kerodon.net/tag/06EF}{Corollary 06EF}]{kerodon}.}
\hB
\end{rem}

Let $f$ be a morphism in $\nCalg$.
\begin{ddd} \label{hrerhrgterhet}\mbox{}
\begin{enumerate}
\item The map $f$ is an $E$-theory equivalence if it is   inverted by all $E$-homological functors.
\item  The map $f$ is a  $KK$-theory equivalence if it is  inverted by all $KK$-homological functors.
\end{enumerate}
\end{ddd}

\begin{ddd}\label{hrteoptrhertg2} \mbox{}
\begin{enumerate}
\item We define the $E$-theory functor $$\ee:\nCalg\to \EE$$ as the Dwyer-Kan localization at the $E$-theory equivalences. 
\item  We define the $KK$-theory functor $$\kk:\nCalg\to \KK$$ as the Dwyer-Kan localization at the $KK$-theory equivalences. 
\end{enumerate}
\end{ddd}

We record the 
 universal property of these functors:

\begin{kor} \label{zjpoztjzhrtzh}For any $\infty$-category $\cC${,} we have    equivalences
$$\ee^{*}:\Fun(\EE,\cC)\stackrel{\simeq}{\to} \Fun^{W_{E}}(\nCalg,\cC)\ , \quad 
\kk^{*}:\Fun(\KK,\cC)\stackrel{\simeq}{\to} \Fun^{W_{KK}}(\nCalg,\cC)\ .$$
\end{kor}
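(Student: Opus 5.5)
This corollary is an instance of the defining universal property of a Dwyer--Kan localization. Write $W_{E}$ and $W_{KK}$ for the classes of $E$- and $KK$-theory equivalences from \cref{hrerhrgterhet}. Let $\Fun^{W_{E}}(\nCalg,\cC)$ and $\Fun^{W_{KK}}(\nCalg,\cC)$ be the full subcategories of $\Fun(\nCalg,\cC)$ spanned by the functors sending every morphism of $W_{E}$, respectively $W_{KK}$, to an equivalence.

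By \cref{hrteoptrhertg2}, $\ee:\nCalg\to \EE$ is the Dwyer--Kan localization $\nCalg\to \nCalg[W_{E}^{-1}]$. This means it is the initial functor inverting $W_{E}$ in the $\infty$-categorical sense. Precisely, for every $\infty$-category $\cC$, restriction along $\ee$ gives a fully faithful functor $\Fun(\EE,\cC)\to \Fun(\nCalg,\cC)$ whose essential image is $\Fun^{W_{E}}(\nCalg,\cC)$. I would first check that the image lands there: for $f\in W_{E}$, $\ee(f)$ is an equivalence by construction, so $G\circ \ee$ inverts $f$ for any $G$. Full faithfulness together with the description of the essential image is then the standard characterization of the localization, e.g.\ as in Cisinski's or Lurie's treatment, where $\nCalg[W^{-1}]$ is defined as (or shown to be) the pushout of $\nCalg\leftarrow W\to W[W^{-1}]=|W|$ in $\Cat_{\infty}$.

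The $KK$ case is word-for-word the same, with $W_{KK}$ and $\kk$ in place of $W_{E}$ and $\ee$.

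There is no real obstacle here. The only point to watch is size: $\nCalg$ is a large category, so the localization should be formed in a larger universe, or one notes that $W_{E}$ is saturated and the localization exists as a possibly large $\infty$-category. The universal property holds regardless of size. Local smallness of $\EE$ and $\KK$ is not needed for this corollary; it is addressed later via the calculus of fractions.
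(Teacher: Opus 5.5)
Your proposal is correct and matches the paper, which records this corollary without proof as the defining universal property of the Dwyer--Kan localizations in \cref{hrteoptrhertg2}. In your size remark, forming the localization in a larger universe is enough. The appeal to saturation of $W_{E}$ is unnecessary and does not help with existence. Saturation is in fact a consequence of this corollary; see \cref{jerhrthgergert}.
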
 Here  
 the superscript $W_{E}$ and $W_{KK}$ stand for the full subcategories of functors which invert $E$-theory equivalences or $KK$-theory equivalences, respectively.
Further below we will state different universal properties.

\section{The classical $E$ and $\KK$-theory functors}\label{jokgkpherthrtgrtgerg}

In this section, we recall the construction of   $E${-} and $KK$-theory functors as {considered} in \cite{LN}, \cite{bunke:2023aa}. To distinguish them from the functors introduced in \cref{hrteoptrhertg2}, we use the adjective ``classical''. This is justified by the fact that their homotopy categories indeed coincide with the classical $E$- and $KK$-theory functors introduced in \cite{kasparovinvent}, \cite{higson}, \cite{zbMATH04182148}, \cite{Guentner_2000}.

We start with a version of the constructions from \cref{khopeertgrtgertge} for separable $C^{*}$-algebras.
Let $\cC$ be a stable $\infty$-category.
\begin{ddd}   \mbox{}
\begin{enumerate}
\item
 A functor $F:\nCalg_{\sepa}\to \cC$ is called $E_{\sepa}$-homological  if
it is homotopy invariant, $\bbK$-stable and exact.
\item A functor $F:\nCalg_{\sepa}\to \cC$ is called   $KK_{\sepa}$-homological  if
it is homotopy invariant, $\bbK$-stable and  semi-exact.
\end{enumerate}
\end{ddd}
The difference between the $E$- and $KK$-theory cases {again} lies in the exactness conditions.

 Let $f$ be a morphism in $\nCalg_{\sepa}$.
\begin{ddd}  \mbox{}
\begin{enumerate}
\item
The map $f$ is an $E_{\sepa}$-theory equivalence if it is   inverted by all $E_{\sepa}$-homological functors. 
\item The map $f$ is a  $KK_{\sepa}$-theory equivalence if it is   inverted by all   $KK_{\sepa}$-homological functors.  \end{enumerate}
 \end{ddd}

\begin{ddd}\label{hrteoptrhertg1} \mbox{}
\begin{enumerate}
\item We define the $E_{\sepa}$-theory functor $$\ee_{\sepa}:\nCalg_{\sepa}\to \EE_{\sepa}$$ as the Dwyer-Kan localization at the $E_{\sepa}$-theory equivalences. 
\item  We define the $KK_{\sepa}$-theory functor $$\kk_{\sepa}:\nCalg_{\sepa}\to \KK_{\sepa}$$ as the Dwyer-Kan localization at the $KK_{{\sepa}}$-theory equivalences. 
\end{enumerate}
\end{ddd}

The following statements are condensed forms of statements shown in \cite{bunke:2023aa}. 
\begin{theorem} \label{kophertgretgertg9}
The categories $\EE_{\sepa}$ and $\KK_{\sepa}$ are stable and the functors
$\ee_{\sepa}$ and $\kk_{\sepa}$ are $E_{\sepa}$ and $KK_{\sepa}$-homological, respectively.
Furthermore, for every stable $\infty$-category $\cC$ we have equivalences 
\begin{eqnarray*}
\ee^{*}_{\sepa}:\Fun^{\lex}(\EE_{\sepa},\cC)&\stackrel{\simeq}{\to}& \Fun^{E_{\sepa}-\homol}(\nCalg_{\sepa},\cC)\ , \\ \kk^{*}_{\sepa}:\Fun^{\lex}(\KK_{\sepa},\cC)&\stackrel{\simeq}{\to}& \Fun^{KK_{\sepa}-\homol}(\nCalg_{\sepa},\cC)\ .
\end{eqnarray*}
\end{theorem}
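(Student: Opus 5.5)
The plan is to construct models of $\EE_{\sepa}$ and $\KK_{\sepa}$ that are visibly stable and receive homological functors. Then we identify these models with the Dwyer--Kan localizations of \cref{hrteoptrhertg1}. We treat the $KK$-case; the $E$-case is identical after replacing semi-split exact sequences by all exact sequences.

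\textbf{Step 1: a stable model and its universal property.} Following \cite{bunke:2023aa}, first Dwyer--Kan localize $\nCalg_{\sepa}$ at the homotopy equivalences and at the left upper corner inclusions. One checks that the resulting $\infty$-category is pointed and has finite colimits, modelled by mapping cones. Then enforce semi-exactness by a further left-exact localization, namely by inverting the comparison maps from kernels to mapping cones of semi-split surjections.

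Stability then comes from Bott periodicity, or more cheaply from the fact that the loop functor $A\mapsto S A:=C_{0}((0,1))\otimes A$ becomes invertible. The key inputs are:
\begin{enumerate}
\item the Puppe sequence;
\item the equivalence $S^{2}A\simeq A$ after $\bbK$-stabilization and homotopy invariance (Cuntz's argument via the Toeplitz extension, which is semi-split).
\end{enumerate}
So $\Omega$ is an equivalence on the localized category, and the localization is stable. The composite functor $\nCalg_{\sepa}\to \KK_{\sepa}$ is $KK_{\sepa}$-homological by construction: semi-split exact sequences go to fibre sequences, since their kernels have been identified with the fibres.

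\textbf{Step 2: the universal property.} Let $F:\nCalg_{\sepa}\to\cC$ be $KK_{\sepa}$-homological with $\cC$ stable. Then $F$ inverts every map we localized at, so it factors essentially uniquely through the localization. The factorization is exact because finite colimits in the localization are computed by mapping cones, and $F$ sends cone sequences to cofibre sequences by semi-exactness.

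Conversely, precomposing an exact functor with the localization functor yields a homological functor. This gives the claimed equivalence with $\Fun^{\lex}$, noting that for stable targets left exact functors are the same as exact functors. In particular, the model inverts precisely the maps inverted by all $KK_{\sepa}$-homological functors: apply the universal property with $\cC$ the model itself, together with the Yoneda embedding into spectrum-valued presheaves. Hence it agrees with the Dwyer--Kan localization of \cref{hrteoptrhertg1}.

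\textbf{Main obstacle.} The hard step is showing that the localized category is actually stable, and in particular that it has finite limits compatible with colimits. This needs a calculus of fractions, or at least a computation of mapping spaces. I would try first to show that semi-split exact sequences give both fibre and cofibre squares, via the mapping-cone comparison $\ker(p)\to C(p)$ and the homotopy equivalences of cones. Since the statement is quoted from \cite{bunke:2023aa}, in the paper one would simply cite the relevant results there rather than redo this.
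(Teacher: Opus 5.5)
Your strategy is essentially the homotopy-theoretic construction of \cite{bunke:2023aa}, and the paper simply cites that reference for this statement without giving an argument of its own. The proofs of \cref{koephethgeteh} and \cref{koopherthertgertg} run the same scheme for non-separable algebras: a category of fibrant objects as in \cite{Uuye:2010aa}, \cite{Cisinski:2017} gives left-exactness, and the Toeplitz extension gives $\Omega^{2}\simeq\mathrm{id}$.

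One point needs correcting. Before stability is established, your localized category is left exact, not finitely cocomplete.

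\begin{itemize}
\item The fibrant-object structure provides finite limits, computed by pullbacks along fibrations (e.g.\ semi-split surjections).
\item The mapping cone $C(f)=A\times_{B}C_{0}((0,1],B)$ models the \emph{fibre} of $f\colon A\to B$, so $SB\to C(f)\to A\to B$ is a fibre (Puppe) sequence, not a colimit construction.
\item Finite colimits are not available a priori. They come for free only once $\Omega\simeq S$ is shown to be invertible, because a pointed $\infty$-category with finite limits and invertible loop functor is stable.
\end{itemize}

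Accordingly, in Step 2 you should argue the exactness of the factorization $\hat F$ via limits, not via ``finite colimits computed by mapping cones''. A semi-exact functor with stable target sends $KK$-fibrant cartesian squares to cartesian squares (\cite[Lem. 2.14]{KKG}), and such squares compute all finite limits in the localization.

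With this change your outline is sound. The genuine work is the left-exactness of the localization, which you rightly single out as the main obstacle and which is exactly what is imported from \cite{bunke:2023aa}.
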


\begin{ddd}\label{hrteoptrhertg1}\mbox{}\begin{enumerate} 
\item 
We define the classical $E$-theory functor by
$$\hat \ee:\nCalg \simeq \Ind_{\aleph_{1}}(\nCalg_{\sepa})\xrightarrow{\Ind_{\aleph_{1}}(\ee_{\sepa})} \Ind_{\aleph_{1}}(\EE_{\sepa})=:\hat \EE\ .$$
\item  We define the classical $KK$-theory functor by $$\hat \kk:\nCalg \simeq \Ind_{\aleph_{1}}(\nCalg_{\sepa})\xrightarrow{\Ind_{\aleph_{1}}(\kk_{\sepa})}   \Ind_{\aleph_{1}}(\KK_{\sepa})=:\hat \KK\ .$$
\end{enumerate}
\end{ddd}

The following notions are obtained by dropping the countable sum-preservation in \cref{koptrhertgertget}.
 \begin{ddd}\mbox{}\begin{enumerate}
\item
 A functor $F:\nCalg\to \cC$ is called restricted $E$-homological if
it is homotopy invariant, $\bbK$-stable, exact, and preserves  $\aleph_{1}$-filtered colimits.
 \item A functor $F:\nCalg\to \cC$ is called restricted $KK$-homological if
it is homotopy invariant, $\bbK$-stable, semi-exact,   and preserves $\aleph_{1}$-filtered colimits.
 \end{enumerate}
\end{ddd}

\begin{lem}\mbox{}\begin{enumerate} 
\item 
A functor is restricted $E$ -homological if and only if it preserves $\aleph_{1}$-filtered colimits and its restriction to $\nCalg_{\sepa}$ is
$E_{\sepa}$-homological
\item A functor is restricted $KK$-homological  if and only if it preserves $\aleph_{1}$-filtered colimits and its restriction to $\nCalg_{\sepa}$ is
  $KK_{\sepa}$-homological. \end{enumerate}
\end{lem}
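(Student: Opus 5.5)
One direction is immediate and the other reduces the global conditions to separable ones via $\aleph_{1}$-filtered colimits. I treat the $E$-case; the $KK$-case is verbatim the same with exact replaced by semi-exact, except for one point about cpc splittings noted at the end.

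\emph{Forward direction.} Suppose $F$ is restricted $E$-homological. Homotopy invariance and $\bbK$-stability are conditions on individual maps $A\to A\otimes C([0,1])$ and $A\to A\otimes \bbK$. If $A$ is separable, then so are $A\otimes C([0,1])$ and $A\otimes \bbK$, since $C([0,1])$ and $\bbK$ lie in $\Nuc_{\sepa}$. An exact sequence of separable algebras is also exact in $\nCalg$. Hence the restriction $F_{|\nCalg_{\sepa}}$ is $E_{\sepa}$-homological, and $F$ preserves $\aleph_{1}$-filtered colimits by assumption.

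\emph{Converse.} Assume $F$ preserves $\aleph_{1}$-filtered colimits and $F_{|\nCalg_{\sepa}}$ is $E_{\sepa}$-homological. For general $A$ I write $A\cong \colim_{A'\subseteq_{s}A}A'$. The functors $-\otimes C([0,1])$ and $-\otimes\bbK$ preserve filtered colimits, since for nuclear $N$ the tensor product $-\otimes N$ is a left adjoint-like construction commuting with colimits of $C^{*}$-algebras (filtered colimits are completions of algebraic unions, and the norm on $A\otimes N$ is determined by that on $A$). Therefore
\[ F(A)\to F(A\otimes C([0,1])) \]
is the colimit over $A'\subseteq_{s}A$ of the equivalences $F(A')\to F(A'\otimes C([0,1]))$, and is itself an equivalence. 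The same argument gives $\bbK$-stability, and reducedness is clear.

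\emph{Exactness.} Given an exact sequence $0\to I\to B\to C\to 0$ in $\nCalg$, I write it as an $\aleph_{1}$-filtered colimit of exact sequences of separable algebras $0\to B'\cap I\to B'\to p(B')\to 0$, indexed by $B'\subseteq_{s}B$. This indexing is cofinal in the separable subalgebras of $I$ and of $C$, because every separable subalgebra of $C$ lifts to a separable subalgebra of $B$, and every separable subalgebra of $I$ is contained in some $B'$. Applying $F$ gives an $\aleph_{1}$-filtered colimit of fibre sequences in the stable category $\cC$. Filtered colimits of fibre sequences are fibre sequences, since colimits commute with cofibres.

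\emph{$KK$-case.} Here the main obstacle is semi-split sequences: the separable subsequences must themselves be semi-split. I would first fix a cpc section $s:C\to B$ and restrict to those $B'$ with $s(p(B'))\subseteq B'$. This family is still $\aleph_{1}$-filtered and cofinal, by an iterative closure argument: alternately enlarge $B'$ by adjoining $s$ of a countable dense subset of $p(B')$, repeat countably often, and take the closure. On this family the restriction $s_{|p(B')}$ is a cpc section of $B'\to p(B')$, so each separable subsequence is semi-split and the colimit argument above applies unchanged.
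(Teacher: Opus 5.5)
Your proposal is correct and follows the paper's route. The forward direction is immediate. For the converse you use that $F$ is the $\aleph_{1}$-filtered colimit of its values on separable subalgebras (i.e.\ the left Kan extension of its restriction), and you check that homotopy invariance, $\bbK$-stability and (semi-)exactness pass to such colimits. This is exactly the inheritance argument the paper only cites, and you also supply the cofinal family of $s\circ p$-invariant separable subalgebras that makes the separable subsequences semi-split.

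The only slip is the heuristic that $-\otimes N$ is ``left adjoint-like''. It does not commute with all colimits (e.g.\ not with free products). However, for the system of inclusions $A'\subseteq_{s}A$, the identification $A\otimes N\cong\colim_{A'}A'\otimes N$ follows from injectivity of $-\otimes N$ on inclusions and density of $\bigcup_{A'}A'\otimes N$, which is all your argument uses.
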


\begin{proof}
We consider the case of $KK$-theory. The case of $E$-theory is analogous.
It is clear that the restriction of a restricted $KK$-homological functor to separable algebras is $KK_{\sepa}$-homological. 
For the converse{,} we use that a  restricted $KK$-homological functor 
is the left-Kan extension of its restriction to separable algebras since it preserves $\aleph_{1}$-filtered colimits.  We then 
  argue, as in \cite{KKG}, \cite{bunke:2023aa}, that  this left Kan-extension inherits the following properties: homotopy invariance, $\bbK$-stability, and semi-exactness.\end{proof}

\begin{kor}\label{zjoptzjrthzth}
For every cocomplete  stable $\infty$-category $\cC$, we have equivalences 
\begin{eqnarray*}
\hat \ee^{*}:\Fun^{\lex, \aleph_{1}\fin}(\hat \EE,\cC)&\stackrel{\simeq}{\to}& \Fun^{\restr-E -\homol}(\nCalg ,\cC)\ , \\  \hat \kk^{*}:\Fun^{\lex,\aleph_{1}\fin}(\hat \KK,\cC)&\stackrel{\simeq}{\to}& \Fun^{\restr-KK -\homol}(\nCalg ,\cC)
\end{eqnarray*}
\end{kor}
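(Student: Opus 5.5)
The corollary is obtained by composing two universal properties: that of $\Ind_{\aleph_{1}}$-completion and that of \cref{kophertgretgertg9}. We treat the $KK$-case; the $E$-case is identical after replacing semi-exactness by exactness.

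\emph{Reduction to the separable case.} We read $\Fun^{\lex,\aleph_{1}\fin}(\hat\KK,\cC)$ as the full subcategory of functors $\hat\KK\to\cC$ that are exact and preserve $\aleph_{1}$-filtered colimits. Restriction along the Yoneda inclusion $y:\KK_{\sepa}\to \Ind_{\aleph_{1}}(\KK_{\sepa})=\hat\KK$ gives an equivalence
$$y^{*}:\Fun^{\aleph_{1}\fin}(\hat\KK,\cC)\stackrel{\simeq}{\to}\Fun(\KK_{\sepa},\cC)\ ,$$
because $\cC$ is cocomplete and so admits $\aleph_{1}$-filtered colimits. Its inverse is left Kan extension. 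We must check that exactness corresponds on both sides under this equivalence. First, $\hat\KK$ is stable, being the $\Ind_{\aleph_{1}}$-completion of a stable category, and $y$ is exact. Hence an exact $\aleph_{1}$-finitary functor restricts to an exact functor. Conversely, let $G:\KK_{\sepa}\to\cC$ be exact and let $\hat G$ be its extension. Finite limits in $\hat\KK$ are computed levelwise on $\aleph_{1}$-filtered diagrams. In $\cC$, $\aleph_{1}$-filtered colimits commute with finite limits, since $\cC$ is stable and finite limits are finite colimits there. So $\hat G$ is exact. This yields
$$\Fun^{\lex,\aleph_{1}\fin}(\hat\KK,\cC)\simeq \Fun^{\lex}(\KK_{\sepa},\cC)\ .$$

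\emph{Passage to $\nCalg$.} By \cref{kophertgretgertg9}, $\kk_{\sepa}^{*}$ identifies $\Fun^{\lex}(\KK_{\sepa},\cC)$ with $\Fun^{KK_{\sepa}-\homol}(\nCalg_{\sepa},\cC)$. Next, restriction along $\nCalg_{\sepa}\to\nCalg\simeq\Ind_{\aleph_{1}}(\nCalg_{\sepa})$ identifies $\Fun^{\aleph_{1}\fin}(\nCalg,\cC)$ with $\Fun(\nCalg_{\sepa},\cC)$. By the preceding lemma, under this identification the restricted $KK$-homological functors correspond exactly to the $KK_{\sepa}$-homological ones. Composing the three equivalences gives an equivalence
$$\Fun^{\lex,\aleph_{1}\fin}(\hat\KK,\cC)\simeq\Fun^{\restr-KK-\homol}(\nCalg,\cC)\ .$$

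\emph{Identification with $\hat\kk^{*}$.} It remains to see that this composite equivalence is $\hat\kk^{*}$. Take $F$ on $\hat\KK$. By construction $\hat\kk\circ\iota\simeq y\circ\kk_{\sepa}$, where $\iota:\nCalg_{\sepa}\to\nCalg$ is the inclusion, and $\hat\kk$ preserves $\aleph_{1}$-filtered colimits. Therefore $F\circ\hat\kk$ preserves $\aleph_{1}$-filtered colimits and restricts on separable algebras to $F\circ y\circ\kk_{\sepa}$. So $F\circ\hat\kk$ is the $\aleph_{1}$-finitary extension of that restriction, which is precisely what the composite equivalence produces.

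The main point needing care is the exactness of left Kan extensions in the reduction step, which rests on finite limits commuting with filtered colimits in stable $\cC$. Everything else is formal bookkeeping of compatible identifications.
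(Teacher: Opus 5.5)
Your proposal is correct and takes the same route the paper intends. The paper gives no written proof and presents the statement as an immediate consequence of \cref{kophertgretgertg9}, the preceding lemma on restricted homological functors, and the universal property of $\Ind_{\aleph_{1}}$-completion; you have spelled out the two details it leaves implicit, namely that left Kan extension along the Yoneda embedding preserves exactness and that the composite equivalence is indeed $\hat\kk^{*}$.
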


Here the superscripts on the right-hand side indicate restricted $E$ or $KK$-homological functors, and $\aleph_{1}\fin$ on the left-hand sides stands for $\aleph_{1}$-filtered colimit preserving.

\begin{theorem}[Automatic countable sums]\label{kopherthertgertg}\mbox{}
\begin{enumerate}
\item The category $\EE_{\sepa}$ admits countable sums and $\ee_{\sepa}$ preserves them.
\item The category $\KK_{\sepa}$ admits countable sums and $\kk_{\sepa}$ preserves them.
\end{enumerate}
\end{theorem}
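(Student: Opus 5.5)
The plan is to show that for a countable family $(A_{n})_{n\in\nat}$ of separable $C^{*}$-algebras, the morphisms $\ee_{\sepa}(A_{m})\to \ee_{\sepa}(\bigoplus_{n}A_{n})$ induced by the canonical inclusions exhibit $\ee_{\sepa}(\bigoplus_{n}A_{n})$ as a coproduct in $\EE_{\sepa}$, and likewise for $\kk_{\sepa}$. Once this holds for all countable families, $\EE_{\sepa}$ and $\KK_{\sepa}$ admit countable sums: every object is of the form $\ee_{\sepa}(A)$, since $\ee_{\sepa}$ is a Dwyer--Kan localization, so every countable family of objects is realized by algebras and the statement is exactly that $\ee_{\sepa}$ preserves the sums. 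By stability and \cref{kophertgertg9}, it suffices to check that for every object $D$ the induced map
$$\map(\ee_{\sepa}(\bigoplus_{n}A_{n}),\ee_{\sepa}(D))\to \prod_{n}\map(\ee_{\sepa}(A_{n}),\ee_{\sepa}(D))$$
is an equivalence. On homotopy groups this map is $E_{*}(\bigoplus_{n}A_{n},D)\to\prod_{n}E_{*}(A_{n},D)$, so it is enough to prove this isomorphism for the classical groups.

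First I identify the homotopy categories of $\EE_{\sepa}$ and $\KK_{\sepa}$ with the classical Connes--Higson $E$-theory and Kasparov $KK$-theory categories. The classical functors are homotopy invariant, $\bbK$-stable and (semi-)exact, so by the universal properties of \cref{kophertgertg9} we obtain comparison functors. Conversely, Higson's and Cuntz's characterizations \cite{higsondiss}, \cite{MR2193334} say the classical categories are universal among homotopy invariant, stable, (split-/semi-)exact functors to additive categories. I expect the Dwyer--Kan localization to factor through these universal functors, which yields the identification. This is the part I would take from \cite{bunke:2023aa}.

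Then I invoke the classical countable additivity results in the first variable. For $KK$, Kasparov showed $KK(\bigoplus_{n}A_{n},D)\cong\prod_{n}KK(A_{n},D)$. The idea is that a countable family of Kasparov $(A_{n},D)$-modules can be assembled into a single module over $\bigoplus_{n}A_{n}$ by direct sum, using $\bbK$-stability of the target Hilbert module; operator homotopies are assembled in the same way. For $E$-theory, the analogous statement $E(\bigoplus_{n}A_{n},D)\cong\prod_{n}E(A_{n},D)$ is in \cite{zbMATH04182148}, \cite{Guentner_2000}: asymptotic morphisms $A_{n}\to D\otimes\bbK$ are combined blockwise into an asymptotic morphism on the sum, after reparametrizing so that the asymptotic conditions hold uniformly on finite partial sums. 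Applying these isomorphisms in every degree (replacing $D$ by its suspensions) gives the required equivalence of mapping spectra.

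The main obstacle is this analytic assembly step. For $E$-theory, one must make the infinite block sum of asymptotic morphisms well defined and continuous. For homotopies, one must check both surjectivity and injectivity of the comparison map, which needs uniform control over countably many homotopies at once. I would handle this with a diagonal reparametrization argument, in the style of Dadarlat's approach to asymptotic morphisms on separable algebras.
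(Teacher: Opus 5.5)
Your proposal is correct and follows the paper's route. Like the paper, you identify the homotopy categories of $\EE_{\sepa}$ and $\KK_{\sepa}$ with the classical group-valued theories via \cite{bunke:2023aa}, invoke classical countable additivity in the first variable (\cite[Prop. 7.1]{Guentner_2000} for $E$-theory, \cite{kasparovinvent} for $KK$-theory), and pass to mapping spectra by suspending the second variable. One small correction: your aside that \cref{kophertgertg9} directly yields comparison functors to the classical categories is not literally right, since that universal property only applies to stable $\infty$-categorical targets; the identification of homotopy categories is a genuine input that has to be imported from \cite{bunke:2023aa}, which is what you and the paper ultimately do.
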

\begin{proof}

For $E_{\sepa}$ we use the comparison of the homotopy category of $\EE_{\sepa}$ with the group-valued $E$-theory and
\cite[Prop. 7.1]{Guentner_2000} stating the  existence of countable {coproducts}   in the  homotopy category.   
For $\KK_{\sepa}$ this is \cite[Kor. 12.3]{bunke:2023aa}. Alternatively, 
one can use, as above,  the existence of countable coproducts in the classical group-valued $KK$-theory shown in \cite{kasparovinvent}.
 \end{proof}
 
 \begin{kor}\label{hrtegertghthrerth}
 \mbox{}
\begin{enumerate}
\item The functor $\hat \ee$ is $E$-homological. 
\item The functor $\hat \kk$ is $KK$-homological. 
 \end{enumerate}
 In addition, for every cocomplete stable $\infty$-category $\cC$, we have equivalences
 \begin{eqnarray}
\hat \ee^{*}:\Fun^{\colim}(\hat \EE,\cC)&\stackrel{\simeq}{\to}& \Fun^{E-\homol}(\nCalg ,\cC)\ ,   \nonumber\\  \hat \kk^{*}:\Fun^{\colim}(\hat \KK,\cC)&\stackrel{\simeq}{\to}& \Fun^{KK -\homol}(\nCalg ,\cC)\label{jgwegrefwf}
\end{eqnarray}

\end{kor}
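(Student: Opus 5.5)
We prove both parts in parallel; we write the argument for $\hat\kk$, and the $E$-theory case is identical with \cref{kopherthertgertg}(1) in place of (2).

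\textbf{$\hat\kk$ is $KK$-homological.} By construction $\hat\kk$ is the $\aleph_{1}$-Ind extension of $\kk_{\sepa}$. Hence it preserves $\aleph_{1}$-filtered colimits, and its restriction to $\nCalg_{\sepa}$ is $\kk_{\sepa}$ followed by the Yoneda inclusion $\KK_{\sepa}\to\Ind_{\aleph_{1}}(\KK_{\sepa})$. That inclusion is exact, so this restriction is $KK_{\sepa}$-homological by \cref{kophertgretgertg9}. The preceding lemma then shows that $\hat\kk$ is restricted $KK$-homological. It remains to show that $\hat\kk$ preserves countable sums.

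\textbf{Countable sums.} Let $(A_{i})_{i\in\nat}$ be a family in $\nCalg$. For each $i$ write $A_{i}=\colim_{A_{i}'\subseteq_{s}A_{i}}A_{i}'$ as an $\aleph_{1}$-filtered colimit. The countable product of these index posets is again $\aleph_{1}$-filtered. Moreover, $\bigoplus_{i}A_{i}$ is the $\aleph_{1}$-filtered colimit of the separable algebras $\bigoplus_{i}A_{i}'$: every separable subalgebra of the sum lies in such a sum, and the sum of $C^{*}$-algebras commutes with these colimits.

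Since $\hat\kk$ preserves $\aleph_{1}$-filtered colimits, and countable sums in $\hat\KK$ commute with filtered colimits, we are reduced to separable families. For separable families, \cref{kopherthertgertg}(2) says that $\kk_{\sepa}$ sends $\bigoplus_{i}A_{i}'$ to a countable coproduct in $\KK_{\sepa}$. We must also know that the Yoneda inclusion into $\Ind_{\aleph_{1}}(\KK_{\sepa})$ preserves countable coproducts. It does, because the inclusion of $\aleph_{1}$-compact objects into $\Ind_{\aleph_{1}}$ preserves $\aleph_{1}$-small colimits.

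The main obstacle is the cofinality claim that $\bigoplus_{i}A_{i}$ is the $\aleph_{1}$-filtered colimit of the $\bigoplus_{i}A_{i}'$, together with the interchange of the countable sum with this product-indexed colimit. We will handle it by proving directly that the comparison map from the colimit is an isomorphism. Injectivity holds because the maps in the system are injective $*$-homomorphisms, hence isometric. For surjectivity, note that any countable subset of $\bigoplus_{i}A_{i}$ has countably many coordinates, each contained in some separable $A_{i}'$.

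\textbf{Universal property.} First, $\hat\KK=\Ind_{\aleph_{1}}(\KK_{\sepa})$ is presentable and stable, and its countable coproducts are computed levelwise via \cref{kopherthertgertg}. Under \cref{zjoptzjrthzth}, the left side of \eqref{jgwegrefwf} is the full subcategory of exact, $\aleph_{1}$-finitary functors $G$ that also preserve countable sums. Such functors preserve all small colimits: all colimits in $\hat\KK$ are generated by finite colimits, countable sums and $\aleph_{1}$-filtered colimits, since any coproduct is an $\aleph_{1}$-filtered colimit of countable subcoproducts.

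Second, $G\circ\hat\kk$ preserves countable sums whenever $G$ does, because $\hat\kk$ does. Conversely, suppose $G\circ\hat\kk$ preserves countable sums. Every countable family of $\aleph_{1}$-compact objects of $\hat\KK$ is of the form $(\hat\kk(A_{i}))_{i}$ with $A_{i}$ separable, and $\hat\kk$ preserves its sum. So $G$ preserves countable sums of $\aleph_{1}$-compact objects, and hence all countable sums by $\aleph_{1}$-filtered approximation.

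Therefore the equivalence of \cref{zjoptzjrthzth} restricts to the equivalences in \eqref{jgwegrefwf}.
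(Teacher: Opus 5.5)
Your proposal is correct and follows the same route as the paper. You show that $\hat\kk$ is restricted $KK$-homological, obtain countable-sum preservation by reducing through $\aleph_{1}$-filtered colimits to the automatic countable sum theorem \cref{kopherthertgertg}, and restrict the equivalence of \cref{zjoptzjrthzth} to sum-preserving functors, which on the left are exactly the colimit-preserving ones; you supply the details the paper leaves implicit, except for one small unjustified claim. That claim is that every $\aleph_{1}$-compact object of $\hat\KK$ has the form $\hat\kk(A)$ with $A$ separable, which silently uses idempotent completeness of $\KK_{\sepa}$ (true, since it is stable with countable coproducts); you can avoid it by approximating with the representables $y(\kk_{\sepa}(A))$ instead.
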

\begin{proof} We argue for $KK$-theory. The argument for $E$-theory is the same. 

By \cref{zjoptzjrthzth},
the  functor $\hat \kk$ is restricted $KK$-homological. Since $\kk_{\sepa}$ preserves countable sums by \cref{kopherthertgertg}, the functor
$\hat\kk$ preserves all sums. It  therefore preserves  countable ones. 
The second statement follows from \cref{zjoptzjrthzth} by adding the condition of countable sum preservation on both sides of the equivalence and observing that we get colimit-preserving functors on the left-hand side.
\end{proof}

 \section{Comparison with the classical definitions}\label{hkoppertrgertger}
 
In this section, we show that the functors introduced in \cref{hrteoptrhertg2} coincide with classical
$\infty$-categorical versions of $E$- and $KK$-theory from \cref{hrteoptrhertg1}. 

{Let us call a cartesian square $$\xymatrix{A\ar[r]\ar[d]^{q} & B\ar[d]^{p} \\C \ar[r] & D} $$ in $\nCalg$  $E$-fibrant if $p$   and $q$ are surjections,   and $KK$-fibrant, if they are semi-split surjections. A functor will be called
$h^{E}$- or $h^{KK}$-left exact if it sends $E$-fibrant or $KK$-fibrant cartesian squares to cartesian squares. An exact or semi-exact functor with a stable target is $h^{E}$- or $h^{KK}$-left exact, respectively; see  \cite[Lem. 2.14]{KKG}.}

 \begin{lem}\label{koephethgeteh}
The categories  
$\EE$ and $\KK$ {(from  \cref{hrteoptrhertg2})} are pointed and left-exact. The functor $\ee$ is exact and  $\kk$   is semi-exact. {In addition, we have equivalences
\begin{align}
\ee^{*}&:\Fun^{\lex}(\EE,\cC)\stackrel{\simeq}{\to} \Fun^{W_{E}, h^{E}-\lex}(\nCalg,\cC)\ ,\nonumber\\
\kk^{*}&:\Fun^{\lex}(\KK,\cC)\stackrel{\simeq}{\to} \Fun^{W_{KK},  h^{KK}-\lex}(\nCalg,\cC)\ . \label{kophertgertgetrg}
\end{align} for any pointed left-exact $\infty$-category $\cC$, where $h^{E}-\lex$ or $h^{KK}-\lex$ indicate $h^{E}$- or $h^{KK}$-left exact functors.} 
\end{lem}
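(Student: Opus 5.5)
We follow the approach of \cite{KKG}, \cite{bunke:2023aa}, where the analogous statements for the separable theories were shown: a Dwyer--Kan localization of a category of fibrant objects is pointed and left-exact, and it has the stated universal property. We treat $KK$; the case of $E$ is the same with surjections in place of semi-split surjections.

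\textbf{Step 1: a category of fibrant objects.} We equip $\nCalg$ with the following data. The weak equivalences $W_{KK}$ are the $KK$-theory equivalences, and the fibrations are the semi-split surjections. Every object is fibrant, since $A\to 0$ is semi-split. Pullbacks along fibrations exist in $\nCalg$, and semi-split surjections are stable under pullback: a cpc section pulls back to a cpc section. For path objects we take $A\otimes C([0,1])$ with evaluation at both endpoints. The map $A\otimes C([0,1])\to A\oplus A$ is split by the cpc map $(a,b)\mapsto (1-t)a+tb$. The map $A\to A\otimes C([0,1])$ lies in $W_{KK}$ by homotopy invariance of every $KK$-homological functor.

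It remains to check that trivial fibrations are stable under pullback. Given a $KK$-fibrant cartesian square with $p$ in $W_{KK}$, every $KK$-homological $F$ is semi-exact with stable target. Hence $F$ sends the square to a cartesian square \cite[Lem. 2.14]{KKG}, so $F(q)$ is an equivalence. The $2$-out-of-$3$ property (indeed $2$-out-of-$6$) holds because $W_{KK}$ is defined as the class of maps inverted by a family of functors.

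\textbf{Step 2: apply the general theory.} By the general results on Dwyer--Kan localizations of categories of fibrant objects (Cisinski, or the version used in \cite[Sec. 2]{KKG}), $\KK=\nCalg[W_{KK}^{-1}]$ is left-exact. Moreover $\kk$ sends fibrant cartesian squares to cartesian squares, and $\kk$ preserves the terminal object. Composition with $\kk$ identifies left-exact functors $\KK\to\cC$ with functors that invert $W_{KK}$ and send fibrant cartesian squares to cartesian squares; this is exactly \eqref{kophertgertgetrg}. Pointedness holds because $0$ is both initial and terminal in $\nCalg$. We must check that $\kk(0)$ remains initial; this follows from the calculus of fractions, since mapping spaces out of $\kk(0)$ are computed by spans $0\leftarrow A'\to B$ with $A'\to 0$ in $W$.

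\textbf{Step 3: semi-exactness of $\kk$.} A semi-split exact sequence $A\to B\to C$ is a $KK$-fibrant cartesian square with corner $0$. So $\kk$ sends it to a fibre sequence in the pointed left-exact category $\KK$, which is the sense of (semi-)exactness available before stability is established. Reducedness is pointedness.

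\textbf{Main obstacle.} The delicate point is the precise citation and verification of the fibration-category axioms, in particular the factorization of the diagonal and the pullback stability of trivial fibrations. The latter uses that all $KK$-homological functors send fibrant squares to cartesian ones. A second subtlety is the pointedness of $\KK$, i.e.\ that $\kk(0)$ is initial in the localization. We expect to settle it using the explicit description of mapping spaces by fractions, possibly combined with the observation that $A\otimes C_0((0,1])\to 0$ is inverted by every homotopy invariant functor.
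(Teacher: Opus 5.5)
Your proposal is correct and follows essentially the paper's route: you equip $\nCalg$ with a category-of-fibrant-objects structure (weak equivalences $W_{KK}$, fibrations the semi-split surjections) and invoke Cisinski's results on its Dwyer--Kan localization, where the paper simply cites Uuye for the axioms you verify by hand. You also give arguments for pointedness and for semi-exactness (in the sense of fibre sequences) that the paper leaves implicit; the pointedness argument is complete once you note that, because $0$ is initial in $\nCalg$, the fraction $0\leftarrow 0\to B$ is terminal in the indexing category of fractions, so the mapping space out of $\kk(0)$ is contractible.
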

\begin{proof} The argument is the same for $\EE$ and $\KK$-theory. We give the argument for $\KK$.
The category $\nCalg$ admits the structure of a category of fibrant objects in the sense of  \cite[Def. 7.4.12 and Def. 7.5.7]{Cisinski:2017},   whose weak equivalences are the $KK$-theory equivalences and whose  fibrations are surjections admitting a cpc split, see    \cite{Uuye:2010aa}.  This implies left-exactness of $\KK$  by
 \cite[Prop. 7.5.6]{Cisinski:2017}.  
 {  By a mapping cylinder construction, any cartesian square in $\nCalg$ is homotopy equivalent to 
  a  $KK$-fibrant cartesian square. 
   A  functor to a pointed left-exact $\infty$-category  that inverts $KK$-theory equivalences (so in particular homotopy equivalences) and  is $h^{KK}$-left exact{,}  sends
 such a square to a  cartesian square in $\cC$.  
 This applies to $\kk$ and
  implies
 the equivalence in \eqref{kophertgertgetrg}.}

 In the case of $E$ theory, for the  fibration{,}  we take all surjections.
\end{proof}

\begin{lem}\label{okphetrtgertg}
The $E$- and $KK$-theory functors (from  \cref{hrteoptrhertg2}) admit symmetric monoidal refinements for the maximal tensor product on $\nCalg$. {The induced tensor structures on $\EE$ and $\KK$ are left-exact in each argument.}
\end{lem}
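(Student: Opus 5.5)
The plan is to deduce the statement from the general machinery for symmetric monoidal Dwyer-Kan localizations, using the universal property in \cref{zjpoztjzhrtzh}. The maximal tensor product makes $\nCalg$ a symmetric monoidal category. By the standard criterion (e.g. \cite[Prop. 4.1.7.4]{HA}, or Hinich's results), the Dwyer-Kan localization at a class $W$ has a symmetric monoidal refinement, with the localization functor symmetric monoidal, provided $W$ is closed under tensoring with objects. Concretely: if $f:A\to B$ is in $W$ and $C$ is any $C^{*}$-algebra, then $f\otimes_{\max}\id_{C}$ must again be in $W$. Then $\otimes$ on $\EE$ (or $\KK$) is characterized by $\ee(A)\otimes\ee(C)\simeq \ee(A\otimes_{\max}C)$.

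The main step is therefore this: for every $C$, the functor $-\otimes_{\max}C$ preserves $E$-theory (resp. $KK$-theory) equivalences. I will show that for every $E$-homological (resp. $KK$-homological) functor $F:\nCalg\to \cC$ and every $C$, the composite $F(-\otimes_{\max}C)$ is again $E$- (resp. $KK$-) homological. This immediately gives the closure property. I will check the defining conditions of \cref{koptrhertgertget} one by one:
\begin{enumerate}
\item Homotopy invariance and $\bbK$-stability follow from associativity isomorphisms such as $(A\otimes C([0,1]))\otimes_{\max}C\cong (A\otimes_{\max}C)\otimes C([0,1])$, and likewise for $\bbK$. These hold because $C([0,1])$ and $\bbK$ are nuclear.
\item Countable sums and $\aleph_{1}$-filtered colimits are preserved because $-\otimes_{\max}C$ is a left adjoint-like functor. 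It commutes with finite products, hence with sums, and with filtered colimits of $C^{*}$-algebras; the latter is the standard continuity of the maximal tensor product under inductive limits.
\item Exactness is the key analytic input, and the reason for using $\otimes_{\max}$. The maximal tensor product preserves all short exact sequences, which gives the $E$-case. It also preserves semi-split ones: a cpc section $s$ of $p$ induces the cpc section $s\otimes_{\max}\id_{C}$, since cpc maps tensor with the identity in the maximal tensor product. This gives the $KK$-case.
\end{enumerate}
The hardest parts are the continuity and the cpc-tensoring facts; I will cite them from the literature (Brown--Ozawa).

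For the second assertion, left-exactness in each variable, I use \cref{koephethgeteh}. Fix $\ee(C)$. The functor $\ee(-)\otimes\ee(C)\simeq \ee(-\otimes_{\max}C)$ inverts $W_{E}$. It is $h^{E}$-left exact because $-\otimes_{\max}C$ sends $E$-fibrant cartesian squares to cartesian squares with surjective legs, and $\ee$ is exact, hence $h^{E}$-left exact by \cite[Lem. 2.14]{KKG}. The same holds for $KK$-fibrant squares with semi-split legs, via the cpc section argument. By \eqref{kophertgertgetrg} the induced endofunctor of $\EE$ (resp. $\KK$) is then left exact. Every object of $\EE$ is of the form $\ee(C)$, since the localization is essentially surjective, so this covers all objects; symmetry gives left-exactness in the other variable.
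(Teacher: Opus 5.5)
Your proposal follows the paper's proof: you get closure of $E$- and $KK$-theory equivalences under $-\otimes_{\max}C$ by checking that precomposition with $-\otimes_{\max}C$ preserves homological functors, which gives the symmetric monoidal Dwyer--Kan localization. Left-exactness in each variable then comes from descending $\ee(-\otimes_{\max}C)$ along \eqref{kophertgertgetrg}.

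One justification has to be replaced. You cannot deduce the $h^{E}$-left exactness of $\ee$ from \cite[Lem.~2.14]{KKG}, because that lemma needs a stable target. At this stage $\EE$ is only known to be pointed and left exact (\cref{koephethgeteh}). Stability of $\EE$ is proved later, in \cref{koopherthertgertg}, using exactly the left-exactness of $\otimes_{\max}$ you are establishing, so this route is circular. Moreover, in a non-stable target a map of fibre sequences that is an equivalence on fibres need not give a cartesian square.

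The correct reason is the one behind \cref{koephethgeteh}. The functor $\ee$ (resp.\ $\kk$) is the localization of a category of fibrant objects whose fibrations are the surjections (resp.\ semi-split surjections), and such a localization sends pullbacks along fibrations to pullbacks \cite[Sec.~7.5]{Cisinski:2017}.

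You should also justify that $-\otimes_{\max}C$ keeps fibrant squares cartesian. This holds because $\otimes_{\max}$ is exact, and a morphism of extensions inducing an isomorphism on the common ideal is a pullback. With these two points supplied, your argument is complete.
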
\begin{proof}
For $A$ in $\nCalg$, the endofunctor
$$A\otimes_{\max}-:\nCalg\to \nCalg$$
preserves homotopies, left-upper corner inclusions, exact and  semi-split-exact sequences, sums and filtered colimits. It therefore preserves $E$- and $KK$-theory equivalences. We conclude that the Dwyer-Kan localizations defining $\EE$ and $\KK$ are symmetric monoidal.  {For $D$ in $\nCalg$, the functor 
$D\otimes_{\max} -$ preserves $KK$-fibrant cartesian squares. Consequently, 
$\kk(D\otimes_{\max} -)$ inverts    $KK$-theory equivalences and is $h^{KK}$-left exact .  It therefore descends to a left-exact functor on $\KK$ by  \eqref{kophertgertgetrg}. This implies the second statement for $\KK$.}
\end{proof}

\begin{lem}\label{koopherthertgertg}
The categories $\EE$ and $\KK$ are stable. 
\end{lem}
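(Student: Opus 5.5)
We know from \cref{koephethgeteh} that $\EE$ and $\KK$ are pointed and left-exact. A pointed left-exact $\infty$-category is stable exactly when its loop functor $\Omega$ is an equivalence. So the plan is to construct an inverse of $\Omega$ on $\EE$ and $\KK$ by descending a suitable endofunctor of $\nCalg$. We give the argument for $\KK$; the $E$-theory case is verbatim the same.

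\emph{Step 1: identify $\Omega$.} Let $S:=C_{0}((0,1))$, which is nuclear and separable, and consider the path sequence
$$0\to A\otimes S\to A\otimes C_{0}([0,1))\to A\to 0\ .$$
It is semi-split exact, and its middle term is contractible. Since $\kk$ is semi-exact and homotopy invariant, the induced cartesian square identifies
$$\kk(A\otimes S)\simeq \Omega\kk(A)\ .$$
Moreover, $-\otimes S$ preserves the relevant structures and therefore descends to an endofunctor of $\KK$. One way to see this is as in \cref{okphetrtgertg}, using $S\otimes_{\max}-$. The descended functor is left exact by \eqref{kophertgertgetrg}, and it is equivalent to $\Omega$ naturally in $A$.

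\emph{Step 2: show $\Omega$ is an equivalence.} We use Bott periodicity in the form of a natural zig-zag of $KK$-theory equivalences relating $A\otimes S^{2}$ and $A\otimes\bbK$, for instance via the Toeplitz extension
$$0\to \bbK\to \mathcal{T}_{0}\to S\to 0\ ,$$
where $\mathcal{T}_{0}$ is the reduced Toeplitz algebra. This extension is semi-split because $S$ is nuclear. The key input is that $\mathcal{T}_{0}$ is $KK$-contractible. Given that, tensoring the extension with $A\otimes S$ and applying semi-exactness yields $\kk(A\otimes S\otimes S)\simeq \kk(A\otimes \bbK)\simeq \kk(A)$, which gives $\Omega^{2}\simeq \id$. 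Hence $\Omega$ is an equivalence, with inverse $\Omega$. To make this natural at the $\infty$-categorical level, note that all the maps involved are natural maps of $C^{*}$-algebras, tensored with $A$. The inverse can therefore be written as the endofunctor induced by $-\otimes S$.

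\emph{Main obstacle.} The hard step is showing $\kk(\mathcal{T}_{0})\simeq 0$, or equivalently some form of Bott periodicity, for our newly defined $\kk$. Instead of reproving it, we transfer it from the classical theory. By \cref{hrtegertghthrerth}, $\hat\kk$ is $KK$-homological, so it factors through $\kk$; in the classical theory $\hat\kk(\mathcal{T}_{0})\simeq 0$ is known. This does not by itself give vanishing in $\KK$.

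The better route is to argue directly that every $KK$-homological functor $F$ sends $0\to\mathcal{T}_{0}\otimes A$ to an equivalence. Cuntz's proof of Bott periodicity uses only homotopy invariance, $\bbK$-stability and split exactness. Those properties hold for any $KK$-homological $F$, including ones valued in $\mathrm{Sp}$ after composing with mapping spectra. From this, $0\to \mathcal{T}_{0}\otimes A$ is a $KK$-theory equivalence, so $\kk(\mathcal{T}_{0}\otimes A)\simeq 0$.

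For $E$-theory the same reasoning applies, since exactness implies semi-exactness.
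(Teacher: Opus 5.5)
Your proposal is correct and follows the paper's proof. The cone extension identifies $\kk(-\otimes C_{0}((0,1)))$ with $\Omega\kk(-)$. The Toeplitz extension, together with the vanishing of $\mathcal{T}_{0}$ under every $KK$-homological functor (Cuntz's argument, which the paper cites as \cite[Cor. 6.6]{bunke:2023aa}) and $\bbK$-stability, then gives $\Omega^{2}\simeq\mathrm{id}$.

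The only real difference is how you pass from $\C$ to all objects. You tensor the extensions with $A$ and descend the resulting natural equivalence along the localization. The paper instead proves $\Omega^{2}\kk(\C)\simeq\kk(\C)$ once and then uses that $\otimes_{\max}$ on $\KK$ is left exact in each variable.

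There is one small slip. To get $\kk(A\otimes S\otimes S)\simeq\kk(A\otimes\bbK)$, tensor the Toeplitz extension with $A$, not with $A\otimes S$, and then apply Step 1 to $A\otimes S$.
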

\begin{proof}

 The proofs are the same for $\EE$ and $\KK$ and similar to the arguments in \cite[Sec. 6 and 7]{bunke:2023aa}. We give the argument for $\KK$.
 The inclusion $0\to C_{0}([0,1))$ is a homotopy equivalence and therefore inverted by all $KK$-homological functors.
 The  semi-split exact sequence $$\xymatrix{C_{0}((0,1)) \ar[r]\ar[d] &C([0,1))  \ar[d]^{\ev_{1}} \\ 0\ar[r] &\C } $$
 is sent by $\kk$  to a fibre sequence which implies that the canonical map $\Omega \kk(\C)\to  \kk(C_{0}((0,1)))$ is an equivalence.  We now consider the semi-split  exact Toeplitz extension
 $$\xymatrix{\bbK\ar[r]\ar[d] &\cT_{0} \ar[d] \\ 0\ar[r] & C_{0}((0,1))} \ .$$ The canonical map $0\to \cT_{0}$ is sent to an equivalence by every $\kk$-homological functor (see \cite[Cor. 6.6]{bunke:2023aa}). It follows that
$\kk(\cT_{0})\simeq 0$ and the canonical map
$\Omega \kk(C_{0}((0,1)))\to \kk(\bbK)$ is an  equivalence. Combining these two equivalences with $\bbK$-stability we {obtain} an equivalence
$$\Omega^{2}\kk(\C)\simeq \kk(\C)\ .$$ {Using that $\otimes_{\max}$ on $\KK$ is left-exact in both arguments,}
this implies that the endofunctor  $\Omega^{2}:\KK\to \KK$ is invertible and therefore the stability of $\KK$.
  \end{proof}
  

\begin{lem}\label{jigowrefwerfgw}
The categories $\EE$ and $\KK$ are cocomplete and $\ee$ and $\kk$ preserve coproducts and send sums to coproducts.
\end{lem}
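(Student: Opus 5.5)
We have to prove that $\EE$ and $\KK$ are cocomplete, that $\kk$ and $\ee$ preserve coproducts, and that they send sums to coproducts. We treat $\KK$; the case of $\EE$ is identical.

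\textbf{Step 1: finite coproducts and sums.}
By \cref{koopherthertgertg}, $\KK$ is stable, so it has finite coproducts and these agree with finite products. By \cref{koephethgeteh}, $\kk$ is left-exact, so it preserves the finite product $A\times B$, which is a cartesian square over $0$ with split surjections. Hence $\kk(A\oplus B)\simeq \kk(A)\times\kk(B)\simeq \kk(A)\sqcup \kk(B)$. The maps $A\to A\oplus B$ induce exactly these coproduct inclusions, so finite sums go to coproducts.

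\textbf{Step 2: arbitrary sums.}
Next we show that $\kk$ sends arbitrary sums to coproducts that exist in $\KK$. We write $\bigoplus_I A_i=\colim_{F}\bigoplus_{F}A_i$ and reduce the countable case to $\aleph_1$-filtered colimits of countable ones.

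The key point is that $\kk$ is itself $KK$-homological. In particular it preserves countable sums and $\aleph_1$-filtered colimits. This is the statement announced as \cref{erherzhgrt9}, but it is not yet available at this point of the paper, so I would not use it. Instead I would argue through the classical functor $\hat\kk$ as follows.

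Since $\hat\kk$ is $KK$-homological (\cref{hrtegertghthrerth}), it inverts $KK$-theory equivalences. By \cref{zjpoztjzhrtzh} it therefore factors as $\hat\kk\simeq \Phi\circ\kk$. Conversely, for every cocomplete stable $\cC$ and every colimit-preserving $G:\hat\KK\to\cC$, the composite $G\hat\kk$ is $KK$-homological. This makes $\Phi^*$ from colimit-preserving functors on $\hat\KK$ to suitable functors on $\KK$ look like an equivalence.

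To upgrade this to $\Phi$ being an equivalence, I would construct an inverse $\Psi:\hat\KK\to\KK$. First, $\kk|_{\sepa}$ is $KK_{\sepa}$-homological. By \cref{kophertgretgertg9}, it therefore factors through $\KK_{\sepa}$.

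The main obstacle is extending this along $\Ind_{\aleph_1}$. That extension requires $\KK$ to admit $\aleph_1$-filtered colimits, and requires these to be compatible with $\kk$.

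I would try to get these colimits from the calculus of fractions and the fibrant-object structure used in \cref{koephethgeteh}. Concretely, I would show that for an $\aleph_1$-filtered system $(A'\subseteq_s A)$, the object $\kk(A)$ is the colimit of the $\kk(A')$. The mapping spaces $\map_{\KK}(\kk(A),\kk(B))$ are computed by fractions $A\leftarrow A''\to B$. Separability considerations should then give the identification
$$\map_{\KK}(\kk(A),X)\simeq\lim_{A'}\map_{\KK}(\kk(A'),X).$$
This step is where analytic input might be hidden, and it is where I expect trouble.

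\textbf{Step 3: conclusion.}
Once $\Phi:\KK\simeq\hat\KK$ is established, the remaining claims follow from the corresponding properties of $\hat\kk$. The category $\hat\KK=\Ind_{\aleph_1}(\KK_{\sepa})$ is cocomplete, because $\KK_{\sepa}$ has countable sums (\cref{kopherthertgertg}) and finite colimits. Moreover, $\hat\kk$ preserves coproducts and sends sums to coproducts (\cref{hrtegertghthrerth}). Transporting these properties along $\Phi$ gives cocompleteness of $\KK$ and the assertions about coproducts and sums for $\kk$.
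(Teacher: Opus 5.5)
There is a genuine gap, and it sits exactly where you expect trouble. Your whole argument is routed through an equivalence $\Phi:\KK\simeq\hat\KK$. The step that is supposed to produce it is that $\KK$ admits $\aleph_1$-filtered colimits and that $\kk(A)$ is the colimit of the $\kk(A')$ for $A'\subseteq_{s} A$. You only assert this (``separability considerations should then give \ldots''). Nothing in the proposal explains how the description of mapping spaces in $\KK$ by fractions would be identified with a limit over separable subalgebras; this is essentially the whole content of the comparison with the classical theory.

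Worse, in the logical order of the paper the facts you need are \emph{consequences} of the present lemma. These are that $\kk$ preserves $\aleph_1$-filtered colimits (\cref{erherzhgrt9}) and that $\kk\simeq\hat\kk$ (\cref{okpherthertgertg}). The colimit-preservation of the factorizations $\hat F$ in \cref{kohetrrtgertg} uses that $\kk$ preserves sums and that $\KK$ is cocomplete. Your heuristic that $\Phi^{*}$ ``looks like an equivalence'' presupposes the same facts, so as it stands the route is circular.

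Independently, you never address coproducts in $\nCalg$ (free products). \cref{hrtegertghthrerth} only gives that $\hat\kk$ preserves sums, so even after transport you would still need to compare $\bigsqcup_n A_n$ with $\bigoplus_n A_n$.

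The missing idea is that no property of $\kk$ itself is needed. It suffices to show that the $C^{*}$-algebraic coproduct functor $\bigsqcup:\prod_I\nCalg\to\nCalg$ sends families of $KK$-theory equivalences to $KK$-theory equivalences. This can be tested on an arbitrary $KK$-homological functor $F$, which preserves countable sums and $\aleph_1$-filtered colimits \emph{by definition}.

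\begin{itemize}
\item For countable $I$, the canonical map $\bigsqcup_n A_n\to\bigoplus_n A_n$ becomes a homotopy equivalence after $-\otimes\bbK$ \cite[Prop. 7.5]{bunke:2023aa}, so $F$ inverts it. Moreover $F(\bigoplus_n f_n)$ is an equivalence since $F$ preserves countable sums.
\item For general $I$, write both sides as $\aleph_1$-filtered colimits over countable subfamilies.
\item Since the localization $\kk$ is controlled by a category of fibrant objects, $\prod_I\kk$ is the Dwyer--Kan localization at families of $KK$-theory equivalences \cite[7.7.1]{Cisinski:2017}. Hence $\bigsqcup$ and the $(\bigsqcup,\mathrm{diag})$-adjunction descend to $\KK$, so $\KK$ has coproducts and $\kk$ preserves them.
\end{itemize}

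Cocompleteness then follows from stability (\cref{koopherthertgertg}). Sums are sent to coproducts because the comparison $\bigsqcup_I A_i\to\bigoplus_I A_i$ is a $KK$-theory equivalence by the same argument.
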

\begin{proof}
We argue for $KK$-theory. The argument for $E$-theory is the same. 
 
 We first argue that a $KK$-homological functor $F$ preserves coproducts of $KK$-homological equivalences.
 For every   family $(A_{n})_{n\in \nat}$, the canonical map
$$F(\bigsqcup_{\nat} A_{n})\to F(\bigoplus_{\nat} A_n)$$ is an equivalence
{since the canonical map  $\bigsqcup_{\nat} A_{n} \to  \bigoplus_{\nat} A_n$ becomes a homotopy equivalence
after applying $-\otimes_{\max}\bbK$;
see for example \cite[Prop. 7.5]{bunke:2023aa}}. If $(f_{n}:A_{n}\to B_{n})$ is a family of $KK$-homological equivalences, then
$F(\bigoplus_{n\in \nat} f_{n})$ is a  $KK$-homological equivalence since $F$ preserves countable sums.
 It follows that $F(\bigsqcup_{n\in \nat} f_{n})$ is a $KK$-homological equivalence.
 
 We now consider a general index set $I$. If $(f_{i}:A_{i}\to B_{i})_{i\in I}$ is a family of $KK$-homological equivalences, then for every $KK$-homological functor $F${,} we have a commutative square
$$ \xymatrix{\colim_{C\subseteq I} F(\coprod_{C}A_{i})\ar[r]^{\simeq}\ar[d]^{\simeq} &\colim_{C\subseteq I} F(\coprod_{C}B_{i}) \ar[d]^{\simeq} \\  F(\coprod_{i\in I}A_{i})\ar[r] & F(\coprod_{i\in I}B_{i})}\ , $$
where the colimits run over the $\aleph_{1}$-filtered poset of    countable subsets $C$ of $I$.
The upper equivalence follows from the case of countable index sets shown above, and the vertical maps are equivalences  since $F$ preserves $\aleph_{1}$-filtered colimits.  It follows that
the lower horizontal map is an equivalence. This finishes the proof of the fact that $F$ preserves coproducts of $KK$-homological equivalences.

 Since the Dwyer-Kan localization $\nCalg\to \KK$  at the $KK$-theory equivalences is controlled by a structure of a category of fibrant objects (see the proof of \cref{koephethgeteh}), by \cite[7.7.1]{Cisinski:2017}
 the left vertical map in the commutative {square} $$\xymatrix{\prod_{i\in I}\nCalg\ar[r]^{\sqcup}\ar[d]^{\prod_{i\in I}\kk} & \nCalg\ar[d]^{\kk} \\  \prod_{I} \KK\ar@{-->}[r]^{\sqcup} & \KK } $$   {is} the Dwyer-Kan localization at the families of $KK$-theory equivalences.
   Since the right-down composition inverts families of $KK$-theory equivalences, we obtain the dashed factorization. Again, since the vertical maps are localizations, the $(\coprod,\diag)$-adjunction on the level of $C^{*}$-algebras 
 descends to an adjunction on the level of $KK$-categories.  Therefore the lower horizontal  functor  represents the coproduct.  We conclude that $\KK$ admits  coproducts and that $\kk$ preserves them. 
 
 The argument shows that  $\kk$   preserves all sums. 
 \end{proof}

An $E$- or $KK$-homological functor $F$ to a cocomplete stable $\infty$-category $\cC$  inverts $E$-homological equivalences or $KK$-homological equivalences by 
definition. By  \cref{zjpoztjzhrtzh}, we  obtain  unique factorizations
$$\xymatrix{\nCalg\ar[rr]^{F}\ar[dr]_{\ee}&&\cC\\&\EE\ar@{-->}[ur]_{\hat F}&}\ , \quad \xymatrix{\nCalg\ar[rr]^{F}\ar[dr]_{\ee}&&\cC\\&\KK\ar@{-->}[ur]_{\hat F}&}\ ,$$
respectively.

\begin{lem}\label{kohetrrtgertg}
In both cases{,} the functor $\hat F$ preserves colimits. 
\end{lem}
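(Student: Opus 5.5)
The plan is to show that $\hat F$ is exact and preserves arbitrary coproducts. Since $\EE$ and $\KK$ are stable by \cref{koopherthertgertg} and admit coproducts by \cref{jigowrefwerfgw}, these two properties together imply that $\hat F$ preserves all colimits. I argue for $KK$-theory; the $E$-theory case is identical, with semi-split exact sequences replaced by all exact sequences.

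\emph{Exactness.} The functor $F$ is semi-exact with stable target, so it is $h^{KK}$-left exact by \cite[Lem. 2.14]{KKG}. It also inverts $KK$-theory equivalences. By the equivalence \eqref{kophertgertgetrg} in \cref{koephethgeteh}, the factorization $\hat F$ is therefore left exact. A left exact functor between stable $\infty$-categories is exact, so $\hat F$ preserves finite colimits.

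\emph{Coproducts.} Let $(X_i)_{i\in I}$ be a family in $\KK$. Every object of $\KK$ is of the form $\kk(A)$, since $\kk$ is a Dwyer-Kan localization that is essentially surjective. Choose $A_i$ with $X_i\simeq \kk(A_i)$. By the proof of \cref{jigowrefwerfgw}, the coproduct in $\KK$ is induced by the coproduct of $C^{*}$-algebras, so
$$\coprod_{i} X_i\simeq \kk\Bigl(\coprod_{i\in I} A_i\Bigr).$$
Hence $\hat F(\coprod_i X_i)\simeq F(\coprod_i A_i)$, and the problem reduces to showing that the comparison map $\coprod_i F(A_i)\to F(\coprod_i A_i)$ is an equivalence.

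For this I use the argument from the proof of \cref{jigowrefwerfgw}. First reduce to countable subfamilies: write $\coprod_{I}A_i$ as the $\aleph_1$-filtered colimit of $\coprod_{C}A_i$ over countable $C\subseteq I$, and use that $F$ preserves $\aleph_1$-filtered colimits. On the target side, $\coprod_I$ in $\cC$ is likewise the filtered colimit of the countable coproducts. For countable $I$, the canonical map $\coprod_{\nat}A_n\to\bigoplus_{\nat}A_n$ becomes a homotopy equivalence after applying $-\otimes_{\max}\bbK$. Since $F$ is homotopy invariant and $\bbK$-stable, $F$ inverts this map, so $F(\coprod_{\nat}A_n)\simeq F(\bigoplus_{\nat}A_n)$. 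Because $F$ preserves countable sums, the latter is $\coprod_n F(A_n)$.

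\emph{Main obstacle.} The key step is checking that these identifications are compatible with the canonical comparison map. In particular, the map $F(\bigoplus A_n)\simeq \coprod F(A_n)$ supplied by sum preservation must agree with the one induced by the structure inclusions $A_m\to\coprod A_n$. This holds because composing with the homotopy equivalence above, the inclusions $A_m\to\coprod_{\nat} A_n$ map to the inclusions $A_m\to\bigoplus_{\nat} A_n$. Once the canonical comparison map is identified this way, $\hat F$ preserves coproducts, and together with exactness it preserves all colimits.
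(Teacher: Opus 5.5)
Your proof is correct and takes the same route as the paper: you get left exactness of $\hat F$ from the equivalence \eqref{kophertgertgetrg}, you show that $\hat F$ preserves coproducts, and you use stability of $\KK$ to conclude that $\hat F$ preserves all colimits. The only difference is that the paper gets coproduct preservation in one line, from $\kk$ sending sums to coproducts and $F$ preserving sums, whereas you go through $\kk(\coprod_i A_i)$ and re-run the argument of \cref{jigowrefwerfgw}; this makes explicit the $\aleph_{1}$-filtered reduction from arbitrary to countable families, which the paper leaves implicit.
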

\begin{proof}
We consider the case of $KK$-theory. The case of $E$-theory is analogous. 
Since $\kk$ and $F$ preserves sums, the functor $\hat F$ preserves sums.
{Furthermore, since $\KK$ is stable and $F$ is semi-exact it belongs to the codomain of the equivalence in \eqref{kophertgertgetrg}. Therefore, the functor $\hat F$ is left exact.}
By stability, it preserves all colimits. 
 \end{proof}
 
 \begin{lem}\label{zkopjtzjrtzhrtzhr}\mbox{}
In both cases{,} the collection of  functors $\hat F$ for all $E$- or $KK$-homological functors $F$, respectively, is jointly
conservative. 
 \end{lem}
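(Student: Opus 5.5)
We argue for $\KK$; the case of $\EE$ is the same. We have to show the following: if a morphism $\phi:X\to Y$ in $\KK$ is sent to an equivalence by $\hat F$ for every $KK$-homological functor $F$, then $\phi$ is an equivalence. The key point is that every morphism in $\KK$ is represented by morphisms of $C^{*}$-algebras. The proof of \cref{koephethgeteh} provides a structure of a category of fibrant objects whose weak equivalences are the $KK$-theory equivalences. Consequently the Dwyer--Kan localization $\kk$ admits a calculus of (right) fractions. So we may write $X\simeq \kk(A)$ and $Y\simeq \kk(B)$, and represent $\phi$ as a zig-zag
$$\kk(A)\xleftarrow{\kk(w),\ \simeq}\kk(A')\xrightarrow{\kk(f)}\kk(B)$$
with $w:A'\to A$ a $KK$-theory equivalence and $f:A'\to B$ a morphism in $\nCalg$.

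Since $\kk(w)$ is an equivalence, $\phi$ is an equivalence if and only if $\kk(f)$ is. Similarly, for every $KK$-homological $F$ the map $\hat F(\kk(w))=F(w)$ is an equivalence, because $w$ is a $KK$-theory equivalence. Hence $\hat F(\phi)$ is an equivalence if and only if $\hat F(\kk(f))\simeq F(f)$ is. The hypothesis therefore says that $F(f)$ is an equivalence for every $KK$-homological functor $F$. By \cref{hrerhrgterhet} this means exactly that $f$ is a $KK$-theory equivalence. Since $\kk$ inverts $KK$-theory equivalences by \cref{hrteoptrhertg2}, $\kk(f)$ is an equivalence, and hence so is $\phi$.

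The main step to verify is the claim that every object and every morphism of $\KK$ lies in the image of $\kk$ up to such a zig-zag. For objects this is immediate: a Dwyer--Kan localization is essentially surjective. For morphisms it is the fraction calculus for categories of fibrant objects, as in \cite[Sec. 7.5]{Cisinski:2017}: mapping spaces in the localization are computed by spans whose backward leg is a weak equivalence. Applied on $\pi_{0}$, this gives the required representation. We only need this for the single map $\phi$, so no further coherence issues arise.

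An alternative route avoids fractions. One could instead use \cref{erherzhgrt9}-type reasoning, or directly the functor $\kk$ itself, if it were already known to be $KK$-homological, since then $\hat\kk\simeq\id$ would be conservative. However, that fact is not yet available at this point, so we rely on the fraction argument above.
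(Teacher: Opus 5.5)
Your proof is correct and follows the paper's argument. The paper likewise uses the category-of-fibrant-objects structure to write the given morphism as $\kk(f)$ for some morphism $f$ of $C^{*}$-algebras, and then concludes from $\hat F(\kk(f))\simeq F(f)$ and the definition of $KK$-theory equivalences; your span with backward leg $\kk(w)$ just makes explicit that the paper's ``$\kk(f)\simeq\hat f$'' is an equivalence in the arrow category.
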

 \begin{proof}
 We consider the case of $KK$-theory. The case of $E$-theory is analogous. Let $\hat f$ be a morphism in $\KK$.
Since the localization   $\kk$ is controlled by a category of fibrant objects (see the proof of \cref{koephethgeteh}),
there exists a morphism $f$ in $\nCalg$ such that $\kk(f)\simeq \hat f$.
Assume that $\hat F(\hat f)$ is an equivalence for all $KK$-homological functors $F$. Then
$\hat F(\hat f)\simeq F(f)$ is an equivalence for all such functors. Therefore $f$ is a $KK$-homological equivalence and   $  \hat f$ is an equivalence.
 \end{proof}
 
 We now finally show:
 \begin{lem}\label{erherzhgrt9}
 \mbox{}
 \begin{enumerate}
 \item The functor $\ee:\nCalg\to \EE$ is an $E$-homological functor.
 \item The functor $\kk:\nCalg \to \KK$ is a $KK$-homological functor.
  \end{enumerate}
\end{lem}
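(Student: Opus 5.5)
We treat the case of $\kk$; the case of $\ee$ is identical, with semi-split exact sequences replaced by all exact sequences. We have to check four things: homotopy invariance, $\bbK$-stability, semi-exactness, and preservation of countable sums and $\aleph_{1}$-filtered colimits. Several of these are already available. By \cref{koephethgeteh} and \cref{koopherthertgertg}, $\KK$ is stable and $\kk$ is semi-exact. By \cref{jigowrefwerfgw}, $\kk$ sends sums to coproducts, so in particular it preserves countable sums.

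Homotopy invariance and $\bbK$-stability are immediate from the definition as a Dwyer--Kan localization. The maps $\id_{A}\otimes\epsilon_{[0,1]}$ and the left upper corner inclusions are inverted by every $KK$-homological functor, because each such functor is by definition homotopy invariant and $\bbK$-stable. Hence these maps are $KK$-theory equivalences, and $\kk$ inverts them.

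The remaining point, and the main one, is preservation of $\aleph_{1}$-filtered colimits. Let $(A_{j})_{j\in J}$ be an $\aleph_{1}$-filtered diagram in $\nCalg$ with colimit $A$. Since $\KK$ is cocomplete by \cref{jigowrefwerfgw}, we may form the comparison map
$$c:\colim_{J}\kk(A_{j})\to \kk(A)\ .$$
By \cref{zkopjtzjrtzhrtzhr}, it suffices to show that $\hat F(c)$ is an equivalence for every $KK$-homological functor $F$. By \cref{kohetrrtgertg}, $\hat F$ preserves colimits, so $\hat F(c)$ is identified with the canonical map
$$\colim_{J}F(A_{j})\simeq \colim_{J}\hat F(\kk(A_{j}))\to \hat F(\kk(A))\simeq F(A)\ .$$
This map is an equivalence because $F$ preserves $\aleph_{1}$-filtered colimits. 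Therefore $c$ is an equivalence.

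The only subtlety in this step is that the identification of $\hat F(c)$ with the comparison map for $F$ must be natural. This follows from $\hat F\circ \kk\simeq F$ together with the fact that $\hat F$ preserves the colimit. The same joint-conservativity argument would give countable sum preservation directly, if one did not want to cite \cref{jigowrefwerfgw} for it.
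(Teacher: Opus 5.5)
Your proof is correct and is essentially the paper's argument. As in the paper, you quote stability, semi-exactness and sum preservation from the preceding lemmas, and homotopy invariance and $\bbK$-stability come for free from the localization; preservation of $\aleph_{1}$-filtered colimits then follows from the colimit-preserving functors $\hat F$ (\cref{kohetrrtgertg}) together with joint conservativity (\cref{zkopjtzjrtzhrtzhr}). The only blemish is your closing aside: deriving countable sum preservation this way instead of citing \cref{jigowrefwerfgw} would be circular, since both the existence of coproducts in $\KK$ and the proof in \cref{kohetrrtgertg} that $\hat F$ preserves sums already rely on \cref{jigowrefwerfgw}.
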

\begin{proof}
We discuss the case of $KK$-theory.  The case of $E$-theory is analogous. 
The category $\KK$ is stable (\cref{koopherthertgertg}) and cocomplete (\cref{jigowrefwerfgw}). 
We already know that $\kk$ is homotopy invariant and $\bbK$-stable. 
By \cref{koephethgeteh}, it is exact.  By \cref{jigowrefwerfgw} it preserves all sums, so in particular countable ones.
It remains to show that $\kk$ preserves $\aleph_{1}$-filtered colimits.
Let $(A_{i})_{i\in I}$ be an  $\aleph_{1}$-filtered  diagram in $\nCalg$ and consider the comparison map
\begin{equation}\label{regwerfwerf}\colim_{i\in I} \kk(A_{i})\to \kk(\colim_{i\in I} A_{i})\ .
\end{equation}
Let $F$ be any $KK$-homological functor. We apply $\hat F$ and obtain  
$$\hat F(\colim_{i\in I} \kk(A_{i}))\simeq \colim_{i\in I} F(\kk(A_{i}))\simeq   \colim_{i\in I}  F(A_{i})\simeq F(\colim_{i\in I} A_{i})\simeq \hat F(\kk(\colim_{i\in I} A_{i}))\ ,$$ where we use \cref{kohetrrtgertg} for the first equivalence 
and the fact that $F$ preserves $\aleph_{1}$-filtered colimits for the third equivalence. 
By \cref{zkopjtzjrtzhrtzhr}, we conclude that the comparison map \eqref{regwerfwerf} itself is an equivalence.
 \end{proof}

{\begin{kor} \label{iogjoegwerfwerfwerf}.
We have equivalences 
 \begin{align*}
 \ee^{*}&:\Fun^{\colim}(\EE,\cC)\stackrel{\simeq}{\to} \Fun^{E-\homol}(\nCalg,\cC)\ ,\\
  \kk^{*}&:\Fun^{\colim}(\KK,\cC)\stackrel{\simeq}{\to} \Fun^{KK-\homol}(\nCalg,\cC)\end{align*}
  for any cocmplete stable $\infty$-category $\cC$.
\end{kor}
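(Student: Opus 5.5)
We argue for $KK$-theory; the $E$-theory case is identical. We must show that restriction along $\kk$ gives an equivalence $\Fun^{\colim}(\KK,\cC)\to \Fun^{KK-\homol}(\nCalg,\cC)$. First, the functor is well-defined: by \cref{erherzhgrt9} the functor $\kk$ is $KK$-homological. If $G:\KK\to\cC$ preserves colimits, then $G$ is exact because source and target are stable (\cref{koopherthertgertg}). Hence $G\circ \kk$ is again homotopy invariant, $\bbK$-stable and semi-exact, and it preserves countable sums and $\aleph_{1}$-filtered colimits. So $\kk^{*}$ lands in $KK$-homological functors.

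Next we use \cref{zjpoztjzhrtzh}. It identifies $\Fun(\KK,\cC)$ with $\Fun^{W_{KK}}(\nCalg,\cC)$ via $\kk^{*}$, so $\kk^{*}$ is fully faithful on all of $\Fun(\KK,\cC)$. In particular it is fully faithful on the full subcategory $\Fun^{\colim}(\KK,\cC)$.

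For essential surjectivity, let $F$ be $KK$-homological. By \cref{hrerhrgterhet}, $F$ inverts $KK$-theory equivalences, so it factors as $F\simeq \hat F\circ\kk$. By \cref{kohetrrtgertg}, $\hat F$ preserves colimits. Hence $F$ lies in the essential image of $\Fun^{\colim}(\KK,\cC)$.

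Combining these, $\kk^{*}$ restricts to a fully faithful, essentially surjective functor between the indicated full subcategories, which gives the claimed equivalence. The only step needing some care is the first one: we need that colimit-preserving functors between stable categories preserve the bifibre sequences witnessing semi-exactness. This is standard, since in the stable setting cofibre sequences are fibre sequences.
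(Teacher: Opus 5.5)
Your proof is correct and follows essentially the same route as the paper. Full faithfulness comes from $\kk$ being a localization, well-definedness from \cref{erherzhgrt9}, and essential surjectivity from the factorization $F\simeq \hat F\circ \kk$ together with \cref{kohetrrtgertg}. The paper first passes through \cref{koephethgeteh} to obtain a left-exact preimage; that step is already contained in the proof of \cref{kohetrrtgertg}, so citing the lemma directly is fine.
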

\begin{proof}
We discuss the $KK$-theory case. The $E$-theory case is analogous.
It follows from \cref{erherzhgrt9} that the restriction along $\kk$ takes values in the subcategory as asserted. 
Since   $\kk$ is a localization, the restriction functor $\kk^{*}$  is fully faithful. It remains to show that it is  essentially surjective.
To this end, we start from  \cref{koephethgeteh}. Since we assume that $\cC$ is stable, as noted before \cref{koephethgeteh} the condition of  $h^{KK}$-left exactness reduces to  semi-exactness,  which are part of the definition of an  $KK$-homological functor.  Since these functors invert all  $KK$-theory equivalences, using  \cref{koephethgeteh} they are restrictions along  $\kk$  of left-exact functors.  By \cref{kohetrrtgertg} the  preimages actually preserve all colimits.
\end{proof}}

\begin{kor}\label{jerhrthgergert}\mbox{}
\begin{enumerate}
\item The functor $\ee$ inverts precisely the $E$-theory equivalences.
\item The functor $\kk$ inverts precisely the $KK$-theory equivalences.
\end{enumerate}
\end{kor}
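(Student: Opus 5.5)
We treat the $KK$-case; the $E$-case is word for word the same. The claim is that a morphism $f$ in $\nCalg$ is a $KK$-theory equivalence if and only if $\kk(f)$ is an equivalence in $\KK$. There are two implications.

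The first implication is formal. By \cref{hrteoptrhertg2}, $\kk$ is the Dwyer-Kan localization at the $KK$-theory equivalences. Hence $\kk$ inverts every $KK$-theory equivalence.

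For the converse, assume that $\kk(f)$ is an equivalence. By \cref{hrerhrgterhet} we must show that every $KK$-homological functor $F:\nCalg\to\cC$ inverts $f$, where $\cC$ is a cocomplete stable $\infty$-category. We use the factorization obtained from \cref{zjpoztjzhrtzh}: there is a functor $\hat F:\KK\to\cC$ with $F\simeq \hat F\circ\kk$, and by \cref{kohetrrtgertg} it even preserves colimits, though we do not need this. Then
$$F(f)\simeq \hat F(\kk(f)),$$
and $\hat F$, like any functor, sends the equivalence $\kk(f)$ to an equivalence. So $F(f)$ is an equivalence for every $KK$-homological $F$, which means that $f$ is a $KK$-theory equivalence.

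A variant of the converse is to use \cref{erherzhgrt9}: $\kk$ is itself $KK$-homological, so if $f$ is inverted by all $KK$-homological functors, it is in particular inverted by $\kk$. This phrasing makes clear that the class of $KK$-theory equivalences is exactly the class detected by the single universal $KK$-homological functor $\kk$.

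We see no real obstacle. The only point needing care is that \cref{hrerhrgterhet} quantifies over $KK$-homological functors into arbitrary cocomplete stable targets, possibly in a larger universe. The factorization through $\KK$ given by \cref{zjpoztjzhrtzh} holds for any target $\infty$-category, so this causes no difficulty.
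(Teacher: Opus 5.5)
Your proof is correct and essentially the paper's: your converse is exactly its argument (every $KK$-homological $F$ factors as $\hat F\circ\kk$, so $F(f)\simeq\hat F(\kk(f))$ is an equivalence), and for the forward direction you use the defining property of the localization where the paper cites \cref{erherzhgrt9}. Your ``variant'' paragraph is mislabeled: it is the paper's proof of the forward direction (a $KK$-theory equivalence is inverted by $\kk$), not a variant of the converse.
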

\begin{proof}
We discuss the case of $KK$-theory.  The case of $E$-theory is analogous. 
Since $\kk$ is $KK$-homological by \cref{erherzhgrt9} it inverts the $KK$-theory equivalences. 
Conversely, assume that a morphism $f$ in $\nCalg$ is inverted by $\kk$.
Since every $KK$-homological functor factorizes over $\kk$, the morphism $f$ is inverted by every $KK$-homological functor  {and is therefore   a $KK$-theory} equivalence.
\end{proof}

 \begin{kor}\label{okpherthertgertg}\mbox{}
 \begin{enumerate}
 \item The functors $\ee:\nCalg\to \EE$ and $\hat \ee:\nCalg\to \hat \EE$ are canonically equivalent.
 \item The functors $\kk:\nCalg\to \KK$ and $\hat \kk:\nCalg\to \hat \KK$ are canonically equivalent.
 \end{enumerate}
 \end{kor}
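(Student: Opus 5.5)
The plan is to compare universal properties. We treat the $KK$-case; the $E$-case is verbatim the same. By \cref{hrtegertghthrerth}, $\hat \kk$ is $KK$-homological and has target the cocomplete stable category $\hat\KK$. By \cref{erherzhgrt9} and \cref{iogjoegwerfwerfwerf}, there is therefore an essentially unique colimit-preserving functor $\Phi:\KK\to \hat\KK$ with $\Phi\circ \kk\simeq \hat\kk$. Conversely, $\kk$ is $KK$-homological by \cref{erherzhgrt9}, and $\KK$ is cocomplete and stable by \cref{koopherthertgertg} and \cref{jigowrefwerfgw}. The equivalence \eqref{jgwegrefwf} then gives an essentially unique colimit-preserving functor $\Psi:\hat\KK\to \KK$ with $\Psi\circ\hat\kk\simeq \kk$.

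Next I will show that $\Psi\circ\Phi\simeq \id_{\KK}$ and $\Phi\circ\Psi\simeq\id_{\hat\KK}$. First, $\Psi\circ\Phi$ is colimit preserving and satisfies $\Psi\circ\Phi\circ\kk\simeq \Psi\circ\hat\kk\simeq\kk$. The identity of $\KK$ has the same two properties. Since $\kk^{*}$ in \cref{iogjoegwerfwerfwerf} is an equivalence, in particular fully faithful, we get $\Psi\circ\Phi\simeq \id_{\KK}$, compatibly with the given identifications under $\kk$. Symmetrically, $\Phi\circ\Psi$ and $\id_{\hat\KK}$ are colimit-preserving functors whose restrictions along $\hat\kk$ are both equivalent to $\hat\kk$. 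Full faithfulness of $\hat\kk^{*}$ in \eqref{jgwegrefwf} then gives $\Phi\circ\Psi\simeq\id_{\hat\KK}$.

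So $\Phi$ is an equivalence of categories under $\nCalg$, i.e.\ $\kk\simeq\hat\kk$, and the equivalence is canonical because it is determined up to a contractible space of choices by the universal properties.

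There is no real obstacle once the universal properties are in place. The only point needing care is that both sides have the \emph{same} universal property: both corepresent $KK$-homological functors into cocomplete stable targets via colimit-preserving functors. This rests on \cref{iogjoegwerfwerfwerf} for $\kk$ and on \eqref{jgwegrefwf} for $\hat\kk$. The latter uses the automatic countable sum theorem \cref{kopherthertgertg}, and this is where the non-formal input enters.
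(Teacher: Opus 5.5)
Your proposal is correct and is the same argument as the paper's. The paper's proof simply notes that $\kk$ and $\hat\kk$ satisfy the same universal property, by comparing \cref{iogjoegwerfwerfwerf} with \cref{hrtegertghthrerth}. You have made the standard Yoneda-type argument explicit: you construct $\Phi$ and $\Psi$, check that $\KK$ and $\hat\KK$ are admissible cocomplete stable targets, and use full faithfulness of $\kk^{*}$ and $\hat\kk^{*}$ to identify both composites with the identity.
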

\begin{proof}
{The functors have same universal property: Compare \cref{iogjoegwerfwerfwerf} with \cref{hrtegertghthrerth}. }
%
%
%
\end{proof}

\begin{rem}\label{jtzjtzhtrzhrjztjr}
	As a consequence of \cref{okpherthertgertg}, {the functors $\ee$ and $\kk$} also satisfy the universal properties described in \cref{zjoptzjrthzth} {(in place of $\hat \ee$ or $\hat \kk$, respectively)}. This is the most important non-formal ingredient for the proof of \cref{okphjzhzjrtjzthr} {below}.
	
	 In comparison, in the proof  in \cite{RSW} of the fact that $U_{\mathrm{loc}}: \Cat^{\perf} \to \mathrm{Mot}_{\mathrm{loc}}$ preserves countable products, the most crucial ingredient is that
	 this functor
	has the additional universal property of being the initial $\aleph_1$-finitary ({in contrast to being finitary which is the defining property}) and bifiber sequence preserving functor with a codomain that is stable and admits $\aleph_1$-filtered colimits \cite[Theorem 1.8]{RSW}. The latter result relies on the computation of mapping spectra in localizing motives due to Efimov \cite[Theorem 4.3]{Efimov_2025}.
	\hB
\end{rem}

\begin{rem}\label{jtzrhzrthrh}{
In this remark we explain the precise  relation between the functors from  \cref{okhperthtrege} with stable $\infty$-category valued $KK$- or $E$-theory functors from previous literature.}

{The $KK$-theory functor in \cite{LN} is defined on separable $C^{*}$-algebras and coincides with the restriction of the $KK$-theory  functor from  \cref{okhperthtrege} to separable $C^{*}$-algebras.} 

{The $KK$-theory functor in  \cite{KKG}  and the $KK$- and $E$-theory functors in  \cite{bunke:2023aa}
are defined on all $C^{*}$-algebras, but do not preserve countable sums. 
They admit natural transformations to the functors from   \cref{okhperthtrege}  which are in fact
left Bousfield localizations forcing the preservation of countable sums.}

{The paper \cite{budu} mainly deals with $E$-theory for separable algebras, but it mentions the extension
to all $C^{*}$-algebras that coincides with the functor from   \cref{okhperthtrege}. }

{Upon restriction to separable $C^{*}$-algebras, the $KK$-theory functor constructed in  \cite{Datta:2025aa} also coincides 
with the $KK$-theory functor from   \cref{okhperthtrege}.} \hB
\end{rem}

  \section{Products in $E$-theory and $KK$-theory}\label{kopnerthrtghrgertg}
  
   We start with demonstrating by an example that the  $E${-} and $KK$-theory functors do not preserve all products.
   We then show that a stabilized countable product functor descends to $E${-} or $KK$-theory and represents the cartesian product there.
  
%
 
  \begin{ex}[$\ee$ and $\kk$ do not preserve all products]\label{gwjiogergwerfwerff}
  
  We give the argument for $E$-theory. The case of $KK$-theory is analogous.
  We employ the fact  that the topological $K$-theory functor is represented in $E$-theory by the tensor unit $\ee(\C)$. More concretely, 
   for every $A$ in $\nCalg$, we have an equivalence \begin{equation}\label{werfewrgwrg}K(A)\simeq \EE(\C,A)\ ,
\end{equation}
  where $K(A)$ denotes the topological $K$-theory spectrum of $A$, and we use the abbreviation  $$\EE(-,-):=\map_{\EE}(\ee(-),\ee(-))\ .$$
  A detailed argument for this fact is given  \cite[Sec. 9]{bunke:2023aa}.
 If the functor $\ee$ would preserve products, then the $K$-theory functor
 $K:\nCalg\to \Sp$  also  would preserve products. In fact, for any family $(A_{i})_{i\in I}$ in $\prod_{I}\nCalg$, we would have the marked equivalence in 
 $$K(\prod_{i\in I}A_{i})\stackrel{\eqref{werfewrgwrg}}{\simeq} \EE(\C,\prod_{i\in I} A_{i}) \stackrel{!}{\simeq}  \map_{\EE}(\ee(\C),\prod_{i\in I} \ee(A_{i}))\simeq  \prod_{i\in I} \EE(\C,A_{i})\stackrel{\eqref{werfewrgwrg}}{\simeq} \prod_{i\in I} K(A_{i})\ .$$
 However,  it is well-known that $K$ does not preserve products. Consider, for example, 
   the constant family $(\C)_{\nat}$ in $\prod_{\nat}\nCalg$.
 We then have $\pi_{0}( \prod_{\nat} K(\C))\cong \prod_{\nat}\Z$.  In contrast, 
 $\pi_{0}(K(\prod_{\nat} \C))$ is   the proper subgroup of this product consisting of the bounded sequences. \hB
  \end{ex}
  
{On the other hand, we have:
 \begin{theorem}[\cite{Willett_2012}]  \label{giwoerpgwerferfw}For every set $I$ we have a commutative square
 $$\xymatrix{\prod_{I}\nCalg\ar[rr]^{\prod_{I}K}\ar[d]^-{\prod_{I}(-\otimes \bbK) } &&\prod_{I}\Sp \ar[d]^{\prod_{I}} \\  \nCalg\ar[rr]^{K} &&\Sp }\ . $$
 \end{theorem}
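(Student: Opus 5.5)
The plan is to define the comparison map $K(\prod_{i\in I}(A_{i}\otimes \bbK))\to \prod_{i\in I}K(A_{i}\otimes\bbK)\simeq \prod_{i\in I}K(A_{i})$ induced by the projections $\prod_{j}(A_{j}\otimes\bbK)\to A_{i}\otimes \bbK$. This map is natural in the family, so the square commutes once we know the map is an equivalence. Since both sides are $KU$-module spectra and homotopy groups of a product of spectra are products of homotopy groups, by Bott periodicity it suffices to show that the induced maps on $K_{0}$ and $K_{1}$ are bijective. I will write $B_{i}:=A_{i}\otimes\bbK$ and $B:=\prod_{i}B_{i}$.

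The basic idea is to use stability to make all the data for each $B_{i}$ uniformly bounded, so that it assembles to data in $B$. First I fix isomorphisms $M_{n}(B_{i})\cong B_{i}$, induced from $M_{n}(\bbK)\cong \bbK$ and chosen independently of $i$. For these isomorphisms the inclusion $B_{i}\to M_{n}(B_{i})$ in the corner is homotopic, uniformly in $i$, to the identity under the identification. With them, every class in $K_{0}(B_{i})$ can be represented as $[p_{i}]-[q_{i}]$ with $p_{i},q_{i}$ projections in $M_{2}(B_{i}^{+})$ whose scalar parts are a fixed standard projection. Every class in $K_{1}(B_{i})$ can be represented by a unitary in $B_{i}^{+}$ itself. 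Projections and unitaries have norm one, so the families $(p_{i})$, $(q_{i})$ and $(u_{i})$ define elements of $M_{2}(B^{+})$ and $B^{+}$, because the scalar parts are constant. This gives surjectivity in both degrees.

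For injectivity, suppose a family of projections $(p_{i})$ with $(q_{i})$ maps to zero, so that for each $i$ the projections are stably homotopic. After absorbing the stabilization, which may need a size $n_{i}$ depending on $i$, into $B_{i}$ via the uniform isomorphisms above, we may assume $p_{i}=v_{i}q_{i}v_{i}^{*}$ for some unitary $v_{i}$ in the identity component. The problem is that the paths from $v_{i}$ to $1$ need not be uniformly continuous, so they need not assemble into a path in $B^{+}$. Here I would apply the Whitehead trick. After doubling we may replace $v_{i}$ by $v_{i}\oplus v_{i}^{*}$, which still conjugates $p_{i}\oplus 0$ to $q_{i}\oplus 0$. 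The unitary $v_{i}\oplus v_{i}^{*}$ is connected to $1$ by the explicit rotation path $\begin{pmatrix} v_{i}&0\\0&1\end{pmatrix}R_{t}\begin{pmatrix} v_{i}^{*}&0\\0&1\end{pmatrix}R_{t}^{*}$. This path has Lipschitz constant independent of $i$, so the family of paths defines a norm-continuous path in $M_{2}(B^{+})$. It connects the product projections, and so shows the class vanishes in $K_{0}(B)$. The same argument applied to unitaries $(u_{i})$ with $[u_{i}]=0$, again using the rotation trick for $u_{i}\oplus u_{i}^{*}$, gives injectivity on $K_{1}$.

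The main obstacle is this injectivity step. Specifically, I must ensure that all the stabilizations, whose sizes $n_{i}$ depend on $i$, and all the homotopies are absorbed with bounds uniform in $i$. This is exactly where $\bbK$-stability enters, and the counterexample in \cref{gwjiogergwerfwerff} shows it is essential. I would first check carefully that the isomorphisms $M_{n}(\bbK)\cong \bbK$ can be chosen compatibly for all $n$. I would also check that stable homotopy of projections in a stable algebra becomes unitary equivalence inside $M_{2}$ without further enlargement. Once these are in place, the rotation trick supplies the required equicontinuity.
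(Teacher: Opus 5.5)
The paper does not prove this statement itself; it only cites Willett--Yu and Bunke--Engel. Your overall route is the standard direct one: reduce to $K_0$ and $K_1$, use stability to get representatives of bounded norm with constant scalar part, and make all homotopies uniformly Lipschitz in $i$. Your surjectivity argument is fine.

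\textbf{Genuine gap: injectivity on $K_1$.} The rotation trick connects $u\oplus u^*$ to $1$ for \emph{every} unitary $u$. Applying it to $(u_i\oplus u_i^*)_i$ therefore only shows $[u]+[u^*]=0$ in $K_1(\prod_iB_i)$, which is always true. In the $K_0$ case the trick is legitimate, because $v_i\oplus v_i^*$ only acts by conjugation on $q_i\oplus 0$, so its second block is invisible. On $K_1$ the unitary is itself the class. What you really need is a null-homotopy of $u_i$ (possibly after adding a fixed number of $1$'s) whose length is bounded independently of $i$. The given null-homotopies may have lengths tending to infinity, so this needs an extra idea. One option:
\begin{itemize}
\item Factor $u=z_N\cdots z_1$ with all $z_k$ of scalar part $1$ and close to $1$.
\item Set $v_k=z_k\cdots z_1$, $V=\mathrm{diag}(v_1,\dots,v_N)$, and let $P$ be a suitably oriented cyclic permutation matrix.
\item Then $VPV^*P^*=\mathrm{diag}(z_1u^*,z_2,\dots,z_N)$ is joined to $1$ by $VP_tV^*P_t^*$, a path of length at most $2\pi$.
\item Since $\mathrm{diag}(u,1,\dots,1)=(VPV^*P^*)^*\,\mathrm{diag}(z_1,\dots,z_N)$, you get a null-homotopy of length $\le 2\pi+O(\max_k\|z_k-1\|)$, independent of $N$.
\end{itemize}
The $i$-dependent matrix size must then still be absorbed, as explained below. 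You also cannot shortcut this by deducing $K_1$ from $K_0$ via suspension, since $C_0(\mathbb{R})\otimes\prod_iB_i\neq\prod_i(C_0(\mathbb{R})\otimes B_i)$.

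\textbf{The absorption step rests on a claim that is false as stated.} The composite $B_i\to M_n(B_i)\cong B_i$ is $\mathrm{Ad}_T$ for the isometric multiplier $T=1\otimes V$, where $VV^*\neq 1$. It is not homotopic to the identity uniformly in $i$. Already for $A_i=\C$ and $I$ infinite, along any homotopy $\mathrm{id}\simeq\mathrm{Ad}_V$ on $\bbK$ the range projection (the strict limit of the images of an approximate unit) must jump from $1$ to a projection $\neq 1$. That forces a jump of size $\ge 1$ in the supremum over the unit ball, so no such family assembles to a homotopy on $\prod_i\bbK$. Choosing the isomorphisms $M_n(\bbK)\cong\bbK$ compatibly in $n$ does not help.

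The correct mechanism works only on $K$-theory:
\begin{itemize}
\item $T=(1\otimes V_{n_i})_i$ is an isometry in $M(\prod_iB_i)=\prod_iM(B_i)$.
\item $\mathrm{Ad}_T$ induces the identity on $K_*(\prod_iB_i)$. After the corner embedding into $M_2$ it becomes conjugation by the unitary $\begin{pmatrix}T&1-TT^*\\0&T^*\end{pmatrix}$, which is joined to $1$ by a norm-continuous path of unitary multipliers.
\item Hence you may replace $(p_i)_i$ by $(\mathrm{Ad}_T(p_i))_i$, and likewise for unitaries, without changing the class. Only after that can you work in a fixed matrix size, even though the stabilizations $n_i$ depend on $i$.
\end{itemize}

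Finally, in the $K_0$ argument you must normalize the scalar parts of the $v_i$ to $1$ before assembling the rotation paths. This is possible because $s(v_i)$ commutes with the fixed scalar projection. Without it the paths do not lie in $M_4((\prod_iB_i)^+)$.
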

 See also \cite[Prop. 8.108]{buen} for a complete proof of this result.}
This indicates
that the  reason for the failure of the functors $\ee$ or $\kk$ to preserve products may be the fact  that ${\bbK}$-stabilization  is not compatible with products. In order to circumvent this problem,
 for every set $I$, we define the stable product functor \begin{equation}\label{gewrgwfrewf}\prod^{s}_{I}:\prod_{I}\nCalg\to \nCalg\ , \quad (A_{i})_{i\in I} \mapsto \prod_{i\in I} (A_{i}\otimes \bbK)\ .
\end{equation}


\begin{prop}\label{hertgergetrbert}  
If the set $I$ is  countable, 
 then the  stable product  functor \eqref{gewrgwfrewf}
 sends families of $E$-theory  ($KK$-theory) equivalences to $E$-theory ($KK$-theory) equivalences.
  \end{prop}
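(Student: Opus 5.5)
The plan is to reduce first to a constant family, and then to show that the stable product of a constant family is a $KK$-homological functor of its single input (the case of $E$-theory is identical). We argue for $KK$-theory and write $I=\nat$. Throughout we use \cref{jerhrthgergert}: a morphism is a $KK$-theory equivalence if and only if $\kk$ inverts it.

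\textbf{Step 1: reduction to a constant family.} Let $(f_{n}:A_{n}\to B_{n})_{n\in\nat}$ be a family of $KK$-theory equivalences and put $f:=\bigoplus_{\nat}f_{n}:A\to B$, where $A:=\bigoplus_{\nat}A_{n}$ and $B:=\bigoplus_{\nat}B_{n}$. By \cref{jigowrefwerfgw}, $\kk$ sends sums to coproducts, so $\kk(f)$ is a coproduct of equivalences and $f$ is a $KK$-theory equivalence. For every $n$ the inclusion $A_{n}\to A$ and the projection $A\to A_{n}$ compose to the identity, and similarly for $B$. Taking products, $\prod^{s}_{\nat}(f_{n})$ becomes a retract, in the arrow category of $\nCalg$, of $\prod_{\nat}(f\otimes\bbK)$. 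A retract of a morphism inverted by $\kk$ is inverted by $\kk$. It therefore suffices to show that
$$\Phi:\nCalg\to\KK\ ,\qquad A\mapsto \kk\Big(\prod_{\nat}(A\otimes\bbK)\Big)$$
inverts $KK$-theory equivalences. For this it is enough to show that $\Phi$ is $KK$-homological, since a $KK$-theory equivalence is by definition inverted by all such functors.

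\textbf{Step 2: the easy properties of $\Phi$.}
\begin{itemize}
\item Reducedness and countable sums: countable sums are taken to sums because $\prod_{\nat}(\bigoplus_{k}A_{k}\otimes\bbK)$ differs from $\bigoplus_{k}\prod_{\nat}(A_{k}\otimes\bbK)$ only by a comparison that I expect to become an equivalence after applying $\kk$. Finite sums are clear, and for countable sums I would argue via $\bbK$-stability as in the proof of \cref{jigowrefwerfgw}.
\item Semi-exactness: products preserve exact sequences, and cpc splittings can be taken coordinatewise with uniform norm bound $1$. Hence $\prod_{\nat}(-\otimes\bbK)$ preserves semi-split exact sequences, and $\kk$ sends these to fibre sequences.
\item $\bbK$-stability: $\prod_{\nat}(A\otimes\bbK\otimes\bbK)$ is isomorphic to $\prod_{\nat}(A\otimes\bbK)$ via a fixed isomorphism $\bbK\otimes\bbK\cong\bbK$ applied in every coordinate. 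The corner inclusion agrees with this isomorphism up to a unitary path in the multiplier algebra, which can be chosen uniformly in all coordinates.
\item $\aleph_{1}$-filtered colimits: $\Phi$ is not obviously $s$-finitary. I would avoid this by replacing $\Phi$ with its left Kan extension from $\nCalg_{\sepa}$. Alternatively, I would use the universal property \cref{zjoptzjrthzth}, which $\kk$ enjoys by \cref{jtzjtzhtrzhrjztjr}, so that it suffices to check that $\Phi$ inverts $KK_{\sepa}$-equivalences between separable algebras and then pass to $\aleph_{1}$-filtered colimits of the separable subalgebras.
\end{itemize}

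\textbf{Step 3: homotopy invariance (the main obstacle).} The natural map $\prod_{\nat}(A\otimes\bbK)\otimes C([0,1])\to\prod_{\nat}(A\otimes C([0,1])\otimes\bbK)$ is not an isomorphism, because a bounded family of paths need not be equicontinuous. Consequently a naive reparametrisation $g\mapsto(u\mapsto g(u\,\cdot))$ does not give a continuous path into the product. I would not try to prove homotopy invariance of $\Phi$ directly. Instead I would show that $\Phi$ sends the evaluation map $\ev_{0}:A\otimes C([0,1])\to A$ to an equivalence, using the exact sequence
$$0\to A\otimes C_{0}((0,1])\to A\otimes C([0,1])\to A\to 0\ .$$
This sequence is semi-split, so by the semi-exactness established in Step 2 the claim reduces to showing $\kk\big(\prod_{\nat}(C_{0}((0,1])\otimes A\otimes\bbK)\big)\simeq 0$. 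Here a uniform contraction exists after all: the standard contraction of the cone, $g\mapsto(s\mapsto g(st))$, is applied inside $C_{0}((0,1])$, and the resulting algebra $\prod_{\nat}(C_{0}((0,1])\otimes D_{n})$ is itself a cone-type algebra over the product. If this uniformity argument fails, my fallback is to represent the inverse of a separable $KK$-equivalence by Kasparov cycles and to take the product of the cycles coordinatewise. This product is again a Kasparov cycle for the product algebras, because all estimates are uniform after $\bbK$-stabilisation, in the spirit of \cref{giwoerpgwerferfw}.
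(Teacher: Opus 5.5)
Your Step 1 is exactly the paper's retract argument (diagram \eqref{grtgertgertgertg}), and it is fine.

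\textbf{The main gap is in Step 2.} The functor $\Phi$ does \emph{not} preserve countable sums. Hence it is not $KK$-homological, and the plan ``show that $\Phi$ is $KK$-homological'' cannot succeed. Take $A_{k}=\C$ for all $k\in\nat$. The comparison map is then $\kk$ of $\bigoplus_{k}\prod_{\nat}\bbK\to\prod_{n}\bigoplus_{k}\bbK$. Let $Q=(e_{n})_{n}$, where $e_{n}$ is a rank-one projection in the $n$-th summand. The image of $[Q]$ in $\prod_{n}K_{0}(\bigoplus_{k}\bbK)\cong\prod_{n}\bigoplus_{k}\Z$ is the ``identity matrix'' $(\delta_{kn})_{k,n}$. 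By contrast, every class coming from $K_{0}(\bigoplus_{k}\prod_{\nat}\bbK)\cong\bigoplus_{k}K_{0}(\prod_{\nat}\bbK)$ is supported in finitely many $k$. So the comparison map is not even surjective on $\pi_{0}\KK(\C,-)\cong K_{0}$. This is no accident: by \cref{okphjzhzjrtjzthr} one has $\Phi\simeq\prod_{\nat}\kk(-)$, and countable products do not commute with countable sums.

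The idea you are missing is that sum-preservation is not needed at all. By \cref{zjoptzjrthzth}, which holds with $\kk$ in place of $\hat\kk$ by \cref{jtzjtzhtrzhrjztjr}, every \emph{restricted} $KK$-homological functor factors through $\kk$. Here restricted means homotopy invariant, $\bbK$-stable, semi-exact and preserving $\aleph_{1}$-filtered colimits. Such a functor therefore inverts $KK$-theory equivalences. This is the genuinely non-formal input, resting on the automatic countable sums of \cref{kopherthertgertg}. The paper applies it to $F\circ\prod_{\nat}(-\otimes\bbK)$ for every $KK$-homological $F$. You could equally apply it to $\Phi$ and finish with \cref{jerhrthgergert}.

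Also, $\aleph_{1}$-filtered colimits are not an issue. In the locally $\aleph_{1}$-presentable category $\nCalg$ they commute with countable products, so no Kan-extension replacement is needed (and such a replacement would change the functor).

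\textbf{Step 3 also fails.} Your concern there is a genuine subtlety; the paper handles it only by remarking that $\prod_{\nat}(-\otimes\bbK)$ preserves homotopy equivalences when the same homotopy is used in every factor. But your resolution does not work. The ``uniform contraction'' $g\mapsto(s\mapsto g(st))$ of $\prod_{\nat}(C_{0}((0,1])\otimes D_{n})$ is exactly the reparametrisation you had just rejected. For $g_{n}(s)=\min(1,ns)$ one has $\|g_{n}(\tfrac{1}{n}\,\cdot\,)\|=1$ for every $n$, while the path starts at $0$. So it is not continuous at $t=0$ in the product norm.

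Moreover, this argument never uses the $\bbK$-factor, and without it the conclusion is false. Identify $C_{0}((0,1))\cong C_{0}(S^{1}\setminus\{1\})$. Then $u=1+(z^{n}-1)_{n}$ is a unitary over $\prod_{\nat}C_{0}((0,1))$ with unbounded winding numbers $(n)_{n}$. Consider the boundary map for the product of the exact sequences $0\to C_{0}((0,1))\to C_{0}((0,1])\to\C\to0$. It starts from $K_{0}(\prod_{\nat}\C)\cong\ell^{\infty}(\nat,\Z)$, so it only produces bounded winding sequences. Hence $[u]$ has non-zero image in $K_{1}(\prod_{\nat}C_{0}((0,1]))$, and $\prod_{\nat}C_{0}((0,1])$ is not contractible.

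Any correct treatment must therefore use the stabilisation in an essential way. Your Kasparov-cycle fallback is not an argument as it stands.
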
 
\begin{proof}

 \begin{lem} \label{kopherthertgergertg} If $I$ is a countable set, then the functor 
$$ \prod_{i\in I}(-\otimes {\bbK}):\nCalg\to \nCalg$$
 preserves   $E$-theory  and $KK$-theory   equivalences. \end{lem}\begin{proof} We discuss the case of $E$-theory.  The case of $KK$-theory is analogous. 
 The functor $\prod_{i\in I}(-\otimes {\bbK})$ preserves homotopy equivalences. Here it is important that we take the same homotopy in every factor.  
 If $f$ is a left upper corner inclusion, then 
 $f\otimes \id_{\bbK}$ is a  homotopy equivalence.
 It follows that the functor  $\prod_{i\in I}(-\otimes{\bbK}) $ sends left upper corner inclusions to homotopy equivalences.
 The functor furthermore preserves exact sequences, since $-\otimes \bbK$ preserves exact sequences, and since  a product of exact sequences is again an exact sequence. 
 Finally, since $I$ is countable and $-\otimes \bbK$ preserves filtered colimits, the functor in question preserves
 $\aleph_{1}$-filtered colimits. 

 If $F$ is an $E$-homological functor, then the composition
$$F\circ \prod_{i\in I}(-\otimes {\bbK}):\nCalg\to \EE$$ is a restricted $E$-homological functor. 
By \cref{zjoptzjrthzth} {(see also  the first part of \cref{jtzjtzhtrzhrjztjr})}, it factorizes over $\ee$ and therefore inverts $E$-theory equivalences. 
Using \cref{zkopjtzjrtzhrtzhr}, we conclude that $\prod_{i\in I}(-\otimes {\bbK})$ preserves $E$-theory equivalences. 

In the case of $KK$-theory{,} we use that a product of semi-split exact sequences is again semi-split exact. 
\end{proof}

We now complete the proof of the proposition.  Let $(f_{i}:A_{i}\to B_{i})_{i\in I}$ be a countable family of $E$-theory equivalences in $\nCalg$.
%
 Then we consider the commutative diagram \begin{equation}\label{grtgertgertgertg}\xymatrix{\prod_{j\in I}  (A_{j}\otimes \bbK)\ar[rrr]^{\prod_{j}f_{j}\otimes \id_{\bbK}}\ar[d]^{(1)}&&&\prod_{j\in I}  (B_{j}\otimes \bbK)\ar[d]^{(2)}\\
\prod_{ j\in I} \bigoplus_{i\in I} (A_{i}\otimes \bbK)\ar[rrr]^{\prod_{j}\bigoplus_{i}f_{i}\otimes \id_{\bbK}}\ar[d]^{(3)}&&&  \prod_{j\in I} \bigoplus_{i\in I} (B_{i}\otimes \bbK) \ar[d]^{(4)} \\\prod_{k\in I}  (A_{k}\otimes \bbK)\ar[rrr]^{\prod_{k}f_{k}\otimes \id_{\bbK}}&&& \prod_{k\in I}  (B_{k}\otimes \bbK)}\ .
\end{equation} The   map (1) is the product of  the family of inclusions 
$$((A_{j}\otimes \bbK)\to \bigoplus_{i\in I} (A_{i}\otimes \bbK))_{j\in I}\ .$$ The map (2) is defined analogously.
The  map (3) is induced by the family of  maps
$$( \prod_{j\in J}\bigoplus_{i\in I} (A_{i} \otimes \bbK)\to A_{k}\otimes \bbK)_{k\in I}\ ,$$  
whose member with index $k$ is the projection onto the component with $(j,i)=(k,k)$. Again, the map $(4)$ is defined analogously.
The vertical compositions in \eqref{grtgertgertgertg} are  identities. 

Since a sum of $E$-theory equivalences is again an $E$-theory equivalence and we can interchange $-\otimes \bbK$ and the sum, it follows from \cref{kopherthertgergertg} that the middle horizontal map in \eqref{grtgertgertgertg} is an equivalence.
Since a retract of an $E$-theory equivalence is again an $E$-theory equivalence,
we conclude that $\prod^{s}_{i\in I} f_{i}$ is an  $E$-theory equivalence. \end{proof}

\begin{theorem}\label{okphjzhzjrtjzthr}
We assume that $I$ is  a countable set.
\begin{enumerate}
\item The stable product functor \eqref{gewrgwfrewf} descends to $E$-theory, and the descended functor represents the categorical product in $\EE$. 
\item The stable product functor \eqref{gewrgwfrewf} descends to $KK$-theory, and the descended functor represents the categorical product in $\KK$.
\end{enumerate}
\end{theorem}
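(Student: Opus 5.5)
We treat the $KK$-case; the $E$-case is verbatim the same. Since the product over $I$ is built componentwise from the localization, the functor $\prod_I \kk:\prod_I\nCalg\to\prod_I\KK$ is again a Dwyer--Kan localization at families of $KK$-theory equivalences. This uses the category of fibrant objects structure from the proof of \cref{koephethgeteh} together with \cite[7.7.1]{Cisinski:2017}, exactly as in the proof of \cref{jigowrefwerfgw}. By \cref{hertgergetrbert}, the composite $\kk\circ\prod^{s}_I$ inverts such families, so we obtain a descended functor
\[
P:\prod_I\KK\to\KK\ ,\qquad P\circ \textstyle\prod_I\kk\simeq \kk\circ \prod^s_I\ .
\]
It remains to show that $P$ is right adjoint to the diagonal $\diag:\KK\to\prod_I\KK$.

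For the counit, the projections $\prod_{i}(A_i\otimes\bbK)\to A_k\otimes\bbK$ are natural in the family. Composing with $\kk$ and the inverse of the $\bbK$-stability equivalence $\kk(A_k)\simeq\kk(A_k\otimes\bbK)$, they give a natural transformation $\epsilon:\diag\circ P\to\id$. For the unit, take $A\to\prod_{I}(A\otimes\bbK)$, the diagonal of the corner inclusions $a\mapsto (a\otimes p)_{i}$. This is natural in $A$ and descends to $\eta:\id\to P\circ\diag$. Both transformations are defined on $C^*$-algebra level and descend because $\kk$ and $\prod_I\kk$ are localizations, so $\Fun$ out of them is fully faithful by \cref{zjpoztjzhrtzh}. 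To verify the triangle identities, it suffices to check them after precomposing with the localizations. There they hold up to homotopy of $C^*$-algebra maps and corner inclusions, using the standard fact that $\id\otimes p$ and a rotation inside $\bbK\otimes\bbK$ are homotopic.

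This is the main obstacle: one composite, $P\to P\diag P\to P$, is given on algebras by
\[
\prod_i(A_i\otimes\bbK)\to\prod_j\Bigl(\prod_i(A_i\otimes\bbK)\otimes\bbK\Bigr)\to\prod_j (A_j\otimes\bbK)\ ,
\]
and it is a map of the form $a\otimes k\mapsto a\otimes(\text{corner})$ in each factor. We must show it is inverted, i.e.\ homotopic to the identity after stabilization. The issue is that the homotopy has to be chosen uniformly in $j$, which is fine since the same rotation homotopy in $\bbK\otimes\bbK$ is used in every factor, as in \cref{kopherthertgergertg}. Alternatively, and more robustly, we verify the adjunction via mapping spaces. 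We show that for every $B$ the map
\[
\map_{\KK}(\kk(B),P(\kk(A_i))_i)\to\prod_i\map_{\KK}(\kk(B),\kk(A_i))
\]
induced by $\epsilon$ is an equivalence. Every object is of the form $\kk(B)$, so this suffices.

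For this mapping-space criterion, we use that $\KK$ also satisfies \cref{zjoptzjrthzth} by \cref{jtzjtzhtrzhrjztjr}. The key case is $B$ separable. Since $\prod^s_I$ on $\kk$ can be rewritten by the retract diagram \eqref{grtgertgertgertg}, we reduce to the classical statement that $\kk_{\sepa}$ admits countable products represented by $\prod_i(A_i\otimes\bbK)$. For general $B$, we then pass to the colimit over $B'\subseteq_s B$, using that $\kk$ preserves $\aleph_1$-filtered colimits (\cref{erherzhgrt9}). If the separable-case input is not directly available, we fall back on the triangle-identity route above, which needs only homotopies and \cref{hertgergetrbert}.
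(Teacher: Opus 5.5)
Your triangle-identity argument is essentially the paper's proof: the same descent via \cite[7.7.1]{Cisinski:2017} and \cref{hertgergetrbert}, and the same unit and counit. The one difference is that the paper only shows the composite $\prod^{s}_{I}\to\prod^{s}_{I}\circ\underline{(-)}\circ\prod^{s}_{I}\to\prod^{s}_{I}$ is an equivalence, by \cref{hertgergetrbert}, and then concludes formally, using the other triangle identity which holds on the nose, that it is the identity; this spares your explicit uniform flip homotopy in $\bbK\otimes\bbK$. You should drop the mapping-space alternative, since $\prod_{i}(A_{i}\otimes\bbK)$ is not separable and there is no classical result about products in $\KK_{\sepa}$ for it to reduce to.
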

\begin{proof}
We consider the case of $E$-theory. The case of $KK$-theory is analogous.
We first consider the square
\begin{equation}\label{hrtgertgtebherth}\xymatrix{\prod_{I} \nCalg \ar[d]^{\prod_{I}\ee}\ar[r]^{\prod_{I}^{s}}&  \nCalg\ar[d]^{\ee}\\ \prod_{I}\EE \ar@{-->}[r]^{\prod_{I}^{s}}&\EE
}\ .
\end{equation} 
Since the Dwyer-Kan localization $\ee$ is controlled by the structure of a category of fibrant objects (see the proof of \cref{koephethgeteh}),  by \cite[7.7.1]{Cisinski:2017}
 the left vertical map   is the Dwyer-Kan localization at the families of $E$-theory equivalences.
Since the right-down composition inverts families of $E$-theory equivalences by \cref{hertgergetrbert}, we obtain the dashed factorization.

It remains to show that it represents the categorical product. 
{By definition of the categorical product, this is equivalent to showing the existence of an adjunction}
\begin{equation}\label{greogkpwerfwerfw}\underline{(-)}:\EE\rightleftarrows \prod_{I} \EE:  \prod^{s}_{I}\ .
\end{equation} 
We start with defining the   unit and the counit.  For the   unit, we 
  consider the natural transformation
 $$ \id\to \prod_{I}^{s}\circ (\underline{-}  ):\nCalg \to \nCalg\ ,$$ that on
 $A$ in $\nCalg $   is given by the map
 $$ A\xrightarrow{\diag}\prod_{I} \underline{A} \xrightarrow{\prod (-\otimes p)}\prod_{I} \underline{A\otimes \bbK}\cong \prod_{I}^{s}\underline{A}\ ,$$
 where $p$ is a rank-one projection in $\bbK$.  By  \cref{hertgergetrbert},
 it descends to a transformation
   $$\alpha: \id\to      \prod^{s}_{I}\circ (\underline{- }):\EE \to \EE \ .$$

 For the counit, we consider the natural transformation
 $$   \underline{\prod_{I}-\otimes \bbK } \to \prod_{I} (-\otimes \bbK) :\prod_{I} \nCalg \to \prod_{I}\nCalg \ ,$$ that
 on a family $(A_{i})_{i}$ in $\prod_{I} \nCalg $ it is given by
 $$\ (p_{i})_{i}: \underline{\prod_{i\in I}A_{i}\otimes \bbK }\to ( A_{i}\otimes \bbK)_{i}\ .$$
 By \cref{hertgergetrbert}, it  descends to 
 $$ \omega:\underline{\prod^{s}_{i\in I} -  }\to   \id :\prod_{I}\EE \to \prod_{I} \EE\ .$$
 We now check the triangle identities:
 We consider the composition
 $$\prod_{I}^{s}\xrightarrow{\alpha\circ \prod_{I}^{s}}  \prod_{I}^{s}\circ (\underline{-}) \circ   \prod_{I}^{s}\xrightarrow{  \prod_{I}^{s}\circ \omega}  \prod_{I}^{s}:  \prod_{I}\EE\to  \EE\ .$$
 This  map is induced by the map
 $$ \prod_{i\in I} (A_{i}\otimes \bbK)\xrightarrow{\diag\otimes p} \prod_{j\in I} (  \underline{\prod_{i\in I} (A_{i}\otimes \bbK))\otimes \bbK}
 \xrightarrow{\prod_{j\in I}p_{j}}   \prod_{j\in I} (A_{j}\otimes \bbK\otimes \bbK)$$
 on the level of $C^{*}$-algebras, 
 This is the stable product of the family of  left upper corner inclusions  $(-\otimes p:A_{i}\to A_{i}\otimes \bbK)_{i\in I}$. Since the stable product sends families of  $E$-theory equivalences to equivalences by \cref{hertgergetrbert}, it is an equivalence.

 We now calculate the composition
 $$(\underline{-})\xrightarrow{(\underline{-})\circ \alpha} (\underline{-})\circ    \prod_{I}^{s}\circ (\underline{-})\xrightarrow{\omega\circ (\underline{-})} (\underline{-})  :\EE\to \prod_{I}\EE\ .$$
  This map is induced by the map 
 $$\underline{A}\xrightarrow{\underline{\diag\otimes p}} \underline{\prod_{I}A\otimes \bbK} \xrightarrow{(p_{i})_{i}}  \underline{A\otimes \bbK}$$ on the level of $C^{*}$-algebras.
 This map is componentwise the  left upper corner inclusion and  sent by $\ee$ to the constant family of $\id_{\EE}$.
 
 Since one of the triangle identities is satisfied and the other one is an equivalence, the latter is actually also the identity.
 This {completes}  the construction of the adjunction \eqref{greogkpwerfwerfw}, {and therefore  the proof of \cref{okphjzhzjrtjzthr}.}
%
 \end{proof}

{The following is the $E$- and $KK$-theoretic analog of \cref{giwoerpgwerferfw}:}
\begin{kor} \label{okphrethertgertge1}If $I$ is countable, then we have  commutative squares
 $$\xymatrix{\prod_{I}\nCalg\ar[rr]^{\prod_{I}\ee}\ar[d]^-{\prod_{I}^{s}} &&\prod_{I}\EE \ar[d]^{\prod_{I}} \\  \nCalg\ar[rr]^{\ee} &&\EE }\ , \quad \xymatrix{\prod_{I}\nCalg\ar[rr]^{\prod_{I}\kk}\ar[d]^-{\prod_{I}^{s}} &&\prod_{I}\KK \ar[d]^{\prod_{I}} \\  \nCalg\ar[rr]^{\kk} &&\KK }\ $$
 \end{kor}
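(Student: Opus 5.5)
The plan is to read the corollary off from \cref{okphjzhzjrtjzthr}, proving it for $\EE$; the $\KK$ case is word for word the same.

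First I will recall the output of the first part of that proof. The square \eqref{hrtgertgtebherth} commutes: there is a functor $\prod^{s}_{I}:\prod_{I}\EE\to\EE$ together with an equivalence
$$\ee\circ\prod^{s}_{I}\simeq \prod^{s}_{I}\circ\prod_{I}\ee$$
of functors $\prod_{I}\nCalg\to\EE$. This comes from the universal property of the Dwyer--Kan localization $\prod_{I}\ee$, which is available by \cite[7.7.1]{Cisinski:2017}, together with \cref{hertgergetrbert}.

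Second, I will use the adjunction \eqref{greogkpwerfwerfw}. The descended functor $\prod^{s}_{I}$ is right adjoint to the diagonal $\underline{(-)}:\EE\to\prod_{I}\EE$. The categorical product functor $\prod_{I}:\prod_{I}\EE\to\EE$ is, by definition, also a right adjoint of the diagonal. Right adjoints are unique up to a canonical equivalence, so $\prod^{s}_{I}\simeq\prod_{I}$ as functors $\prod_{I}\EE\to\EE$. On the left-hand side, this equivalence is realized by the family of maps $\omega$ composed through the projections.

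Third, I substitute this equivalence into the square from the first step. This gives the commutative square in the statement:
$$\ee\circ\prod^{s}_{I}\simeq\prod_{I}\circ\prod_{I}\ee.$$
Concretely, the comparison map
$$\ee\Bigl(\prod_{i\in I}(A_{i}\otimes\bbK)\Bigr)\to\prod_{i\in I}\ee(A_{i}\otimes\bbK)\simeq\prod_{i\in I}\ee(A_{i})$$
is induced by the projections and is an equivalence.

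The only point needing care is that this identification is canonical. Specifically, the equivalence $\prod^{s}_{I}\simeq\prod_{I}$ must be the one induced by the counit $\omega$, which is what makes the square commute naturally and not just objectwise. This is exactly the content of the uniqueness of adjoints applied to \eqref{greogkpwerfwerfw}, so I do not expect any real obstacle.
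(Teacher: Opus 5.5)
Your proposal is correct and follows the paper's route: the corollary is read off from \cref{okphjzhzjrtjzthr}, namely from the commutative square \eqref{hrtgertgtebherth} and the adjunction \eqref{greogkpwerfwerfw}, which identifies the descended stable product with the categorical product by uniqueness of right adjoints to the diagonal. You also correctly note that the resulting identification is the one induced by the counit $\omega$, that is, by the projections combined with $\bbK$-stability.
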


 \section{Products and the bootstrap class}\label{zlpherhtrhrgert}
 
 In this section{,} we introduce the bootstrap classes in $\EE$ and $\KK$ and discuss the question whether they are closed under taking infinite products. 
 In \cref{okphetrtgertg}{,} we have seen that the maximal tensor product $\otimes_{\max}$ on $\nCalg$
 induces symmetric monoidal structures on $\EE$ and $\KK$ such that there are 
 symmetric monoidal refinements of the functors
 $$\ee:\nCalg\to \EE\ , \quad \kk:\nCalg\to \KK\ .$$
As a consequence of \cref{kohetrrtgertg}, the symmetric monoidal structure $\otimes_{\max}$ on $\EE$ and $\KK$ is cocontinuous in each argument. The tensor units are given by 
$\ee(\C)$ and $\kk(\C)$, respectively.  Using, \eqref{werfewrgwrg} {(or $K(A)\simeq \KK(\C,A)$ in the case of $\
KK$-theory)},  the classical fact that the $K$-theory functor for
$C^{*}$-algebras preserves coproducts, and that the functors $\ee$ and $\kk$ preserve coproducts by \cref{jigowrefwerfgw}, one sees 
 that $\ee(\C)$ and $\kk(\C)$ are compact objects in the respective categories. We have the commutative ring spectrum
$$KU:=K(\C)\simeq  {\EE(\C,\C)\simeq \KK(\C,\C)} .$$
The existence of the symmetric monoidal right Bousfield localizations \eqref{gwerfwerrewgwergerwgwe} is now a formal consequence of these facts. \begin{ddd} The bootstrap classes in $\EE$ and $\KK$ are defined as the 
 essential images of the respective left-adjoints $b^{E}$ and $b^{KK}$.\end{ddd} They are copies of  the category ${\Mod_{KU}(\Sp)}$ in $\EE$ and $\KK$. 
 
 The functors $K^{E}$ and $K^{KK}$ are lax symmetric monoidal. \begin{ddd} The (max-)Künneth classes in $\EE$ and $\KK$  are defined as the full subcategories on the following sets of objects:
 $$\cK_{\max}^{E}:=\{A\in \EE\mid \forall B\in \EE :  K^{E}(A)\otimes_{KU}K^{E}(B)\stackrel{\simeq}{\to} K^{E}(A\otimes_{\max}B)\}\ , $$
 $$\cK_{\max}^{KK}:=\{A\in \KK\mid \forall B\in \KK :  K^{KK}(A)\otimes_{KU}K^{KK}(B)\stackrel{\simeq}{\to}  K^{KK}(A\otimes_{\max}B)\}\ . $$\end{ddd}
 Since the tensor product is cocontinuous {in each variable,} the Künneth classes are localizing subcategories. Furthermore,
since the tensor unit belongs to the Künneth classes,
the bootstrap classes are contained in the Künneth classes. 

%
%
 
\begin{theorem}\label{okhpertertger}
We have $\prod_{\nat }\ee(\C)\not\in \cK_{\max}^{E}$ and $\prod_{\nat }\kk(\C)\not\in \cK_{\max}^{KK}$.
\end{theorem}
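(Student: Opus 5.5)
The plan is to reduce the statement to a computation in topological $K$-theory, by choosing a suitable test object $B$ in the definition of $\cK_{\max}^{E}$. I give the argument for $E$-theory; the $KK$ case is identical.

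First, identify the object. By \cref{okphrethertgertge1}, applied to the constant family $(\C)_{\nat}$, we have $\prod_{\nat}\ee(\C)\simeq \ee(\prod_{\nat}\bbK)$. Write $P:=\prod_{\nat}\bbK$. By \eqref{werfewrgwrg} and \cref{giwoerpgwerferfw},
$$K^{E}(\ee(P))\simeq K(P)\simeq \prod_{\nat}KU\ .$$
Hence $\pi_{*}K^{E}(\ee(P))\cong \prod_{\nat}\Z$ in even degrees and $0$ in odd degrees. The group $\prod_{\nat}\Z$ is torsion free, hence flat over $\Z$. Therefore the Künneth spectral sequence for $\otimes_{KU}$ degenerates, and for any $B$ we get
$$\pi_{0}\bigl(K^{E}(\ee(P))\otimes_{KU}K^{E}(B)\bigr)\cong \prod_{\nat}\Z\otimes_{\Z}\pi_{0}K^{E}(B)\oplus(\text{odd part})\ .$$

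Second, choose $B$. A bootstrap object cannot serve as a witness: $\ee(P)\otimes_{\max}-$ is cocontinuous and $\ee(\C)$ is compact, so every bootstrap $B$ satisfies the Künneth formula against $\ee(P)$. I therefore take $B=\ee(P)$ itself; this is where the maximal tensor product enters. The left-hand side then has $\pi_{0}\cong\prod_{\nat}\Z\otimes_{\Z}\prod_{\nat}\Z$. To detect the right-hand side $K_{0}(P\otimes_{\max}P)$, I use the $*$-homomorphism
$$\Psi:P\otimes_{\max}P\to\prod_{(i,j)\in\nat\times\nat}\bbK\otimes\bbK\ ,$$
whose $(i,j)$-component is $\pr_{i}\otimes\pr_{j}$. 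It exists by the universal property of $\otimes_{\max}$, since the two factors map into commuting subalgebras. By \cref{giwoerpgwerferfw}, the target has $K_{0}\cong\Z^{\nat\times\nat}$. The composite
$$\prod_{\nat}\Z\otimes\prod_{\nat}\Z\to K_{0}(P\otimes_{\max}P)\xrightarrow{\Psi_{*}}\Z^{\nat\times\nat}$$
sends $a\otimes b$ to the matrix $(a_{i}b_{j})_{i,j}$. Its image therefore consists exactly of the finite-rank matrices.

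It thus suffices to produce a class $x\in K_{0}(P\otimes_{\max}P)$, possibly after further stabilization, such that $\Psi_{*}(x)$ has infinite rank, for instance the identity matrix $(\delta_{ij})$. Such an $x$ shows that the Künneth map is not surjective on $\pi_{0}$, which proves the theorem. This step is the main obstacle. The naive candidate $\sum_{n}e_{n}\otimes f_{n}$, with $e_{n},f_{n}$ rank-one projections in the $n$-th coordinate, does not converge in norm. I would instead replace $P$ by the $E$-equivalent algebra $\prod_{n}(M_{n}\otimes\bbK)$. In each coordinate I take the maximally entangled projection $q_{n}\in M_{n}\otimes M_{n}$, and try to show that the sequence $(q_{n})_{n}$ defines an element of $P\otimes_{\max}P$. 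That element would map under $\Psi$ to a projection with components of rank $\delta_{ij}$. The point to verify is that $\otimes_{\max}$ permits this, in contrast to $\otimes_{\min}$, which is the distinction mentioned in the acknowledgement. If this fails, I would fall back on showing non-injectivity via a Tor-type argument: take $B$ to be $\ee$ of a product of UHF or Cuntz-type algebras, so that $K^{E}(B)$ has torsion phenomena that do not commute with products.
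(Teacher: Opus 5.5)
\textbf{Verdict: the proposal has a genuine gap at the key step.}

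Your reduction coincides with the paper's, and everything up to the final step is correct. You identify $\prod_{\nat}\ee(\C)\simeq\ee(\prod_{\nat}\bbK)$ and take $B$ to be the same object. Flatness of $\prod_{\nat}\Z$ in the Künneth sequence gives $\prod_{\nat}\Z\otimes\prod_{\nat}\Z$ on the source. You then observe that after applying $(p_m\otimes p_n)_*$ (your $\Psi_*$), its image consists of finite-rank integer matrices.

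The gap is exactly the step you flag, and it is the whole non-formal content of the theorem (the paper's \cref{jzlptzrjtzhztrhr}). You never produce a class in $K_0(\prod_{\nat}\bbK\otimes_{\max}\prod_{\nat}\bbK)$ with infinite-rank image. ``Showing that $(q_n)_n$ defines an element of $P\otimes_{\max}P$'' is not yet a strategy. The map $\Psi$ is far from surjective, and every element of the tensor product is a norm limit of finite sums $\sum_{l\le L}a_l\otimes b_l$. Prescribing diagonal blocks, with zero off-diagonal blocks, does not supply such approximants. For rank-one \emph{product} projections $e_n\otimes f_n$ on the diagonal, no such element exists at all. Compressing an approximant to the vectors $\xi_m\otimes\eta_n$ would give, for every $N$, an $N\times N$ matrix of rank at most $L$ that is entrywise $\epsilon$-close to the identity. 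This is impossible, since such perturbed identity matrices have rank tending to infinity with $N$ (Alon). So whether the entangled choice works cannot be settled softly.

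The missing idea is a uniform spectral gap, which Ozawa's property (T) construction supplies.
\begin{itemize}
\item Take a property (T) group $G$ with pairwise non-isomorphic finite-dimensional irreducible unitary representations $(V_n,\rho_n)$, and set $\pi=\prod_n\rho_n$.
\item Extend $g\mapsto\bar\pi(g)\otimes\pi(g)$ to $\sigma\colon C^*_{\max}(G)\to\prod_m\bbK(\bar V_m)\otimes_{\max}\prod_n\bbK(V_n)$.
\item For a finite symmetric generating set $S$, property (T) makes $0$ isolated in the spectrum of $\frac{1}{|S|}\sum_{g\in S}g-e$. Hence the Kazhdan projection lies in $C^*_{\max}(G)$.
\item Its image $\tilde Q$ genuinely lies in the tensor product. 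It is obtained by continuous functional calculus from the finite sum $\frac{1}{|S|}\sum_{g\in S}\bar\pi(g)\otimes\pi(g)$, whose spectral gap is uniform over all blocks.
\item The $(m,n)$-block of $\tilde Q$ is the projection onto the $G$-invariants of $\bar V_m\otimes V_n\cong\Hom_{\C}(V_m,V_n)$. By Schur's lemma this is $0$ for $m\neq n$. For $m=n$ it is the rank-one projection onto the vector corresponding to $\id_{V_m}$, which is precisely your maximally entangled projection.
\end{itemize}

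Your diagnosis that $\otimes_{\max}$, as opposed to $\otimes_{\min}$, is what permits this is backwards. Each $p_m\otimes p_n$ factors through $\prod_{\nat}\bbK\otimes_{\min}\prod_{\nat}\bbK$, so such a class for $\otimes_{\max}$ pushes forward to one for $\otimes_{\min}$ with the same image. In the paper, $\otimes_{\max}$ enters because it is the tensor product that descends to $\EE$ and $\KK$ (\cref{okphetrtgertg}) and that defines the Künneth classes. Your fallback via torsion in $K$-theory of products of UHF or Cuntz algebras is too unspecific to close the gap.
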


Before proving this result we derive the desired consequences.

\begin{kor}\label{hopkethretgertg}
\mbox{}
\begin{enumerate}
\item\label{hopkethretgertg1} The $E$- and $KK$-bootstrap classes are not closed under countable  products.
\item The functors $b^{E}$ and $b^{KK}$ do not preserve countable products.
\end{enumerate}
\end{kor}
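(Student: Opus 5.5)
We derive both parts from \cref{okhpertertger}, taking it as given. The key inputs are the inclusions of the bootstrap classes into the Künneth classes, noted just before \cref{okhpertertger}, and the fact that $b^{E}$, $b^{KK}$ are fully faithful left adjoints. We argue for $E$-theory; the $KK$-case is verbatim the same.

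For part \ref{hopkethretgertg1}, note first that $\ee(\C)$ lies in the bootstrap class, since it is $b^{E}(KU)$. Suppose the bootstrap class were closed under countable products. Then $\prod_{\nat}\ee(\C)$ would lie in the bootstrap class. The bootstrap class is contained in $\cK^{E}_{\max}$, because $\cK^{E}_{\max}$ is localizing and contains the tensor unit. Hence $\prod_{\nat}\ee(\C)$ would lie in $\cK^{E}_{\max}$, contradicting \cref{okhpertertger}. Here the product is the categorical product in $\EE$, which exists for countable index sets by \cref{okphjzhzjrtjzthr}.

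For the second part, consider the product $\prod_{\nat}KU$ in $\Mod_{KU}(\Sp)$. Suppose $b^{E}$ preserved countable products. Then we would have
$$b^{E}\Big(\prod_{\nat}KU\Big)\simeq \prod_{\nat}b^{E}(KU)\simeq\prod_{\nat}\ee(\C)\ .$$
The left-hand side lies in the essential image of $b^{E}$, that is, in the bootstrap class. So $\prod_{\nat}\ee(\C)$ would lie in the bootstrap class and hence in $\cK^{E}_{\max}$, again contradicting \cref{okhpertertger}. Equivalently, this part follows from part \ref{hopkethretgertg1}. Since $b^{E}$ is fully faithful, its essential image is closed under products exactly if $b^{E}$ sends products of objects of $\Mod_{KU}(\Sp)$ to products in $\EE$, and we have just shown that this fails for $\prod_{\nat}KU$.

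No step here is a real obstacle; all the substance sits in \cref{okhpertertger}. The only point needing care is that $b^{E}(KU)\simeq\ee(\C)$. This holds because $b^{E}$ is the symmetric monoidal left adjoint sending the unit $KU$ to the tensor unit $\ee(\C)$.
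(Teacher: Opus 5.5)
Your proposal is correct and matches the paper's own proof. Part 1 uses the inclusion of the bootstrap class into the K\"unneth class together with \cref{okhpertertger}, and part 2 uses $b^{E}(KU)\simeq\ee(\C)$, so that product preservation would place $\prod_{\nat}\ee(\C)\simeq b^{E}(\prod_{\nat}KU)$ in the bootstrap class.
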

\begin{proof} We argue for the $KK$-case. The $E$-case is analogous.
The first assertion follows since the $KK$-bootstrap class is contained in  $\cK_{\max}^{KK}$ and $\kk(\C)$ belongs to the $KK$-bootstrap class. 
For the second assertion we use that $b^{KK}(KU)\simeq \kk(\C)$. If $b^{KK}$ would preserve infinite products, then
$\prod_{\nat }\kk(\C)\simeq b^{KK}(\prod_{\nat}KU)$ would belong to the bootstrap class.
\end{proof}

\begin{proof}[Proof of  \cref{okhpertertger}]  We provide the argument for the $KK$-case. The $E$-case is analogous.
 We show  that the comparison map
 \begin{equation}\label{fweqrfewrfwergr}K^{KK}(\prod_{\nat} \kk(\C)) \otimes_{KU} K^{KK}(\prod_{\nat} \kk(\C)) \to K^{KK}(\prod_{\nat} \kk(\C)\otimes_{\max}\kk(\C))
\end{equation} is not an equivalence.
 To this end we construct an element in $\pi_{0}K^{KK}(\prod_{\nat} \kk(\C)\otimes_{\max}\kk(\C))$ which is not in the image.
 
 Since $K^{KK}$ is a right-adjoint, it preserves products. Using $K^{KK}(\kk(\C))\simeq KU$ the domain of the comparison map  \eqref{fweqrfewrfwergr}  is equivalent to
 $$\prod_{\nat} KU\otimes_{KU}\prod_{\nat}KU\ .$$
 We now use the Künneth formula for $KU$-modules.  For $M,N$ in $\Mod_{KU}(\Sp)$
 {this is}  an exact sequence of abelian groups
 {$$0\to  \pi_{0}M\otimes \pi_{0}N \oplus \pi_{1}M\otimes \pi_{1}N \to \pi_{0}(M\otimes_{KU}N) \to  \pi_{0}M*\pi_{1}N\oplus \pi_{1}M*\pi_{0}N \to 0\ ,$$}
 where {$- * -$ denotes the abelian groups $\Tor(-,-)$}. Using further that
 $$\pi_{*}\prod_{\nat}KU\cong \left\{\begin{array}{cc} \prod_{\nat} \Z&*=0\\0 &*=1  \end{array} \right.\ ,
 $$
 we obtain for the domain of the map in \eqref{fweqrfewrfwergr}: \begin{equation}\label{gwerferfrgw}\pi_{0}(K^{KK}(\prod_{\nat} \kk(\C)) \otimes_{KU} K^{KK}(\prod_{\nat} \kk(\C)))\cong \prod_{\nat}\Z\otimes  \prod_{\nat}\Z\ .
\end{equation}
 We now analyse the target. Using \cref{okphjzhzjrtjzthr}{,} we have
 $$\prod_{\nat} \kk(\C)\simeq \kk(\prod_{\nat} \bbK)\ .$$ 
 Since $\kk$ is symmetric monoidal, we then obtain
 $$\prod_{\nat} \kk(\C)\otimes_{\max} \prod_{\nat} \kk(\C)\simeq 
\kk( \prod_{\nat}  \bbK)\otimes_{\max} \kk(\prod_{\nat} \bbK)\simeq \kk(\prod_{\nat} \bbK\otimes_{\max} \prod_{\nat} \bbK)\ .$$
 The $KU$-module in the target  of the comparison map  \eqref{fweqrfewrfwergr}  is thus given by
$$ K(\prod_{\nat} \bbK\otimes_{\max} \prod_{\nat} \bbK)\ .$$
For $n$ in $\nat$ let $p_{n}:\prod_{\nat}\cdots\to \cdots$ denote the projection to the $n$th-factor.
For $(m,n)$ in $\nat\times \nat$ we have a
  commutative  diagram
\begin{equation}\label{zjoptztzhrtzhr}\xymatrix{\ar@/_5cm/[ddd]^{\eqref{gwerferfrgw}}_{\cong}\pi_{0}(K^{KK}(\prod_{\nat} \kk(\C)) \otimes_{KU} K^{KK}(\prod_{\nat} \kk(\C)))\ar[r]^-{\pi_{0}(\eqref{fweqrfewrfwergr})}\ar[dd]_{\cong}^{\pi_{0}(KK^{K}(p_{m})\otimes_{KU}K^{KK}(p_{n}))}&\ar[d]^{!}_{\cong } \pi_{0}(K^{KK}(\prod_{\nat} \kk(\C)\otimes_{\max}\kk(\C)))\\&\pi_{0}K(\prod_{\nat}\bbK\otimes_{\max}\prod_{\nat}\bbK)\ar@/^-2cm/@{-->}[dd]\ar[d]^{\pi_{0}K(p_{m}\otimes_{\max} p_{n})}\\ \pi_{0}( KK^{KK}(\kk(C))\otimes_{KU} K^{KK}(\kk(\C)))\ar[dr]^{\cong} &\ar[d]^{\cong} \pi_{0} K(\bbK\otimes_{\max} \bbK )\ar[d] \\\prod_{\nat}\Z\otimes \prod_{\nat}\Z\ar[r]_{p_{m}\otimes p_{n}}&\Z}\ .\end{equation}
The marked isomorphism was justified above.
The collection of projection maps (indicated by the dashed arrow above) for all $(m,n)$ in $\nat\times \nat$ induces a homomorphism
\begin{equation}\label{gewrfwerfwergwerg}\pi_{0}K(\prod_{\nat} \bbK \otimes_{\max}\prod_{\nat} \bbK )\to \prod_{\nat\times \nat} \Z\ , \quad q\mapsto (q_{m,n})_{(m,n)\in \nat\times \nat}\ .
\end{equation} 
\begin{prop}\label{jzlptzrjtzhztrhr}
There exists a class   $q$ in $\pi_{0}K(\prod_{\nat} \bbK\otimes_{\max} \prod_{\nat} \bbK)$
whose image  under \eqref{gewrfwerfwergwerg}   satisfies 
  $$q_{m,n}=\left\{\begin{array}{cc} 1 & m=n\\ 0  &  m\not=n \end{array} \right.\ .
 $$
\end{prop}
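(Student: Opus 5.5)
We construct $q$ as the class of an explicit projection in $\prod_{\nat}\bbK\otimes_{\max}\prod_{\nat}\bbK$. Fix a rank-one projection $e$ in $\bbK$. For every $n$ in $\nat$, let $P_{n}$ in $\prod_{\nat}\bbK$ be the sequence with $e$ in position $n$ and $0$ elsewhere. The plan is to find a projection $Q$ in the tensor product whose image under $p_{m}\otimes_{\max}p_{n}$ is $e\otimes e$ if $m=n$ and $0$ otherwise. Then $q:=[Q]$ does the job, because $[e\otimes e]$ generates $\pi_{0}K(\bbK\otimes_{\max}\bbK)\cong\Z$ compatibly with the identification in \eqref{zjoptztzhrtzhr}.

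The natural candidate is the \emph{diagonal sum} $Q:=\sum_{n\in\nat}P_{n}\otimes P_{n}$. The summands are mutually orthogonal projections, since $P_{n}P_{m}=0$ for $n\neq m$. The difficulty is that this sum does not converge in norm, so we must realize it as an element of the algebraic or maximal tensor product in some other way. Here is the key trick, and the reason $\otimes_{\max}$ is used. The algebra $\prod_{\nat}\bbK$ contains the commutative subalgebra $\ell^{\infty}(\nat)\cong C(\beta\nat)$, given by sequences $(\lambda_{n}e)_{n}$. The inclusion
$$j:\ell^{\infty}(\nat)\otimes_{\max}\ell^{\infty}(\nat)\to\prod_{\nat}\bbK\otimes_{\max}\prod_{\nat}\bbK$$
is a well-defined $*$-homomorphism; for $\otimes_{\max}$ this holds for any pair of homomorphisms. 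The source is $C(\beta\nat\times\beta\nat)$, since commutative algebras are nuclear. So it suffices to produce a projection $R$ in $C(\beta\nat\times\beta\nat)$ whose value at $(m,n)\in\nat\times\nat$ is $\delta_{m,n}$, and then set $Q:=j(R)$.

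For the existence of $R$: the diagonal $\Delta=\{(n,n)\}\subseteq\nat\times\nat$ defines a function $\chi_{\Delta}$ on the dense open subset $\nat\times\nat$. We need it to extend continuously to $\beta\nat\times\beta\nat$. This is the main obstacle, and in fact $\chi_{\Delta}$ does \emph{not} extend: the closure of $\Delta$ and the closure of its complement meet, for example at pairs $(\omega,\omega')$ of distinct ultrafilters. So the naive plan fails, and I would modify it as follows. Instead of $\ell^{\infty}$ on both sides, use the nonunital structure of $\bbK$ in the first factor and a unitary-type construction. Choose an isometry-free decomposition in which the first factor contributes the element $P=(e)_{n}$ and the second factor contributes a family. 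One option is to apply the maximal-tensor homomorphism $\prod_{\nat}\bbK\otimes_{\max}\prod_{\nat}\bbK\leftarrow$ from a single algebra $D$, via two commuting homomorphisms $\phi,\psi:\prod_{\nat}\bbK\to M(D)$. Taking $D=\prod_{\nat}(\bbK\otimes\bbK)$ with $\phi,\psi$ acting diagonally in each factor gives a homomorphism $\prod\bbK\otimes_{\max}\prod\bbK\to\prod(\bbK\otimes\bbK)$. However, this map goes the wrong way, so I would use it dually: it furnishes the functional (the detection map) rather than the class.

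My committed approach is therefore this. Realize $q$ as the image under $K$ of a map of $C^{*}$-algebras $\prod_{\nat}\bbK\to\prod_{\nat}\bbK\otimes_{\max}\prod_{\nat}\bbK$ (or an asymptotic or KK-morphism) that sends the class $(1)_{n}\in\prod\Z$ of $(e)_{n}$ to a class whose $(m,n)$-coordinate is $\delta_{m,n}$. Concretely, build it from the unit $\alpha$ and the coproduct structure of \cref{okphjzhzjrtjzthr}: the diagonal morphism $\kk(\C)\to\kk(\C)\otimes\kk(\C)$, taken componentwise, assembles by the product property of \cref{okphjzhzjrtjzthr} into a morphism $\prod_{\nat}\kk(\C)\to\prod_{\nat}\kk(\C)\otimes_{\max}\prod_{\nat}\kk(\C)$, with components $\mathrm{pr}_{n}\otimes\mathrm{pr}_{n}$ formed using the product property in each variable. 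Precomposing with the class $(1)_{n}$ of $\pi_{0}K^{KK}(\prod\kk(\C))$ gives $q$, and computing the $(m,n)$-coordinates through \eqref{zjoptztzhrtzhr} reduces to checking $\mathrm{pr}_{m}\circ\mathrm{pr}_{n}$-type compatibilities, which yields $\delta_{m,n}$. The hard step is constructing this morphism, since $\otimes_{\max}$ does not commute with products; I would attempt it by mapping into $\prod_{\nat}\kk(\C)\otimes_{\max}\prod_{\nat}\kk(\C)$ from $\prod_{\nat}\kk(\C\otimes\C)$ via the lax structure map in the opposite direction. If that fails, I would fall back to constructing an explicit projection in the algebraic tensor product using almost-orthogonal decompositions.
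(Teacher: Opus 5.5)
There is a genuine gap: none of your routes produces the class, and the one you commit to cannot work. The product property of \cref{okphjzhzjrtjzthr} describes morphisms \emph{into} $\prod_{\nat}\kk(\C)$, but $\prod_{\nat}\kk(\C)\otimes_{\max}\prod_{\nat}\kk(\C)$ is not a product. The components $\mathrm{pr}_{m}\otimes\mathrm{pr}_{n}$ would only assemble into a morphism if $-\otimes_{\max}\prod_{\nat}\kk(\C)$ commuted with countable products, and nothing in the paper provides that. The lax structure map only goes $\prod_{m}X_{m}\otimes_{\max}\prod_{n}Y_{n}\to\prod_{m,n}(X_{m}\otimes_{\max}Y_{n})$, and there is no map in the opposite direction. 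Lifting the ``identity matrix'' along this comparison map is exactly the content of the proposition.

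In fact no argument using only countable products, the tensor unit and the symmetric monoidal structure can succeed, because it would apply verbatim in $\Mod_{KU}(\Sp)$ with $KU$ in place of $\kk(\C)$. There the K\"unneth sequence gives $\pi_{0}(\prod_{\nat}KU\otimes_{KU}\prod_{\nat}KU)\cong\prod_{\nat}\Z\otimes\prod_{\nat}\Z$. Its image in $\prod_{\nat\times\nat}\Z$ consists of finite-rank matrices, so the desired class does not exist there. This is precisely the phenomenon the proposition is used to detect in the proof of \cref{okhpertertger}. Your fallback via the algebraic tensor product is not a construction either. For finite-dimensional blocks it is hopeless: a projection $\sum_{k\le r}a_{k}\otimes b_{k}$ has rank matrix $\bigl(\sum_{k}\mathrm{tr}(a_{k,m})\,\mathrm{tr}(b_{k,n})\bigr)_{m,n}$, which has rank at most $r$.

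What is missing is an analytic source for a projection in the maximal tensor product whose blocks are diagonal. Your commutative attempt through $C(\beta\nat\times\beta\nat)$ rightly fails, but your justification is slightly off. The closures of the diagonal and of its complement meet at the points $(\omega,\omega)$ with $\omega$ a free ultrafilter. Pairs of distinct ultrafilters $\omega\neq\omega'$ are separated from the diagonal by $\bar A\times\overline{\nat\setminus A}$ with $A\in\omega$. This failure suggests that you need a noncommutative source with a spectral gap.

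The paper follows Ozawa. Take a finitely generated property (T) group $G$ with pairwise non-isomorphic finite-dimensional irreducible unitary representations $(V_{i},\rho_{i})_{i\in\nat}$, and put $\pi:=\prod_{i}\rho_{i}$. The universal property of $C^{*}_{\max}(G)$ gives $\sigma\colon C^{*}_{\max}(G)\to\prod_{m}\bbK(\bar V_{m})\otimes_{\max}\prod_{n}\bbK(V_{n})$, $g\mapsto\bar\pi(g)\otimes\pi(g)$. For a finite symmetric generating set $S$, property (T) makes $0$ an isolated point of the spectrum of $B_{S}=\frac{1}{|S|}\sum_{g\in S}g-e$. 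Hence the Kazhdan projection $P$ lies in $C^{*}_{\max}(G)$. The $(m,n)$-block of $\sigma(P)$ is the projection onto the $G$-invariant vectors in $\bar V_{m}\otimes V_{n}\cong\mathrm{Hom}_{\C}(V_{m},V_{n})$. By Schur's lemma this is $0$ for $m\neq n$ and of rank one for $m=n$. Pushing $\sigma(P)$ into $\prod_{\nat}\bbK\otimes_{\max}\prod_{\nat}\bbK$ along embeddings $\bar V_{m},V_{m}\hookrightarrow L^{2}(\nat)$ gives $q$.

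Your instinct to map a universal algebra into the maximal tensor product was right. The source just has to be something like $C^{*}_{\max}(G)$ rather than $\ell^{\infty}(\nat)\otimes\ell^{\infty}(\nat)$.
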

We first complete the proof of \cref{okhpertertger} assuming this proposition.

 The $K$-theory class $q$ is not in the image of the comparison map \eqref{fweqrfewrfwergr}.
 In fact, by the commutativity of \eqref{zjoptztzhrtzhr},  the image of  \eqref{fweqrfewrfwergr} is sent by the composition of the marked map in  \eqref{zjoptztzhrtzhr} with \eqref{gewrfwerfwergwerg}  to finite-rank integral $\nat\times \nat$-matrices, while
 $q$  is sent to  the identity matrix which does not have finite rank.
\end{proof}

\begin{proof}[Proof of \cref{jzlptzrjtzhztrhr}]

 The following construction is essentially due to \cite{Ozawa_2003}.
 We can choose a finitely generated group  $G$   with property $T$ and a family
 of pairwise non-isomorphic finite-dimensional unitary irreducible representations $(V_{i},\rho_{i})_{i\in \nat}$.
 We then consider the Hilbert space   $ \bigoplus_{i\in \nat } V_{i}$ with the unitary representation that sends
 $g$ to   $\pi(g):= \prod_{i\in \nat} \rho_{i}(g)$ in $\prod_{i\in \nat } \bbK(V_{i})$.
  By $(\bar V,\bar \pi(g))$ we denote the dual representation. 
  For every group element $g$ in $G${,} we form the unitary
   $ \sigma(g):= \bar \pi(g)\otimes_{\max}  \pi(g)$  in 
 $ \prod_{m\in \nat} \bbK(\bar V_{m})\otimes_{\max}  \prod_{n\in \nat } \bbK(V_{n})$. 
By the universal property of the maximal group $C^{*}$-algebra{,}  this  homomorphism   extends to a homomorphism of $C^{*}$-algebras
 $$\sigma:C^{*}_{\max}(G)\to   \prod_{m\in \nat} \bbK(\bar V_{m})\otimes_{\max}  \prod_{n\in \nat } \bbK(V_{n})\ .$$
 For every non-empty finite symmetric subset $S$ of $G${,} we consider the selfadjoint operator
 $$B_{S}:=\frac{1}{|S|} \sum_{g\in S} g -  e$$ in $ C_{\max}^{*}(G)$.
 Taking $S$  a finite symmetric generating set and using the fact  that $G$ has property $T$, the operator $B_{S}$ has zero as an isolated eigenvalue with eigenprojection $P$ (called the Kazhdan projection)  in $C^{*}_{\max}(G)$.
   Then $\tilde Q:=\sigma(P)$ is a projection in $    \prod_{m\in \nat} \bbK(\bar V_{m})\otimes_{\max}  \prod_{n\in \nat } \bbK(V_{n})$.
 The projection $(p_{m}\otimes_{\max} p_{n})( \tilde Q)$ is the eigenprojection of the selfadjoint operator 
 $$(p_{m}\otimes_{\max} p_{n})(\sigma(B_{S}))=\frac{1}{|S|} \sum_{g\in S} \bar\rho_{m}(g)\otimes \rho_{n}(g) -  \id_{\bar V_{m}\otimes V_{n}}$$ on $\bar V_{m}\otimes V_{n}$ to the eigenvalue $0$. The latter  is the projection onto the $G$-invariant vectors.
 By Schur's Lemma, the subspace of these invariants vanishes for $m\not=n$, and is given by 
    the multiples of the  identity $\id_{V_{m}}$ under the identification
 $\bar V_{m}\otimes V_{n}\cong \Hom_{\C}(V_{m},V_{n})$. 
 Consequently, $(p_{m}\otimes_{\max} p_{n})( \tilde Q)$ vanishes for $m\not=n$ and is a one-dimensional projection if $m=n$.

%
%
%
%
%
%

We consider the Hilbert space $H:=\bigoplus_{\nat }L^{2}(\nat)$. Then
$\prod_{\nat}\bbK$ can be identified with the subalgebra of $B(H)$ of operators which
are diagonal for the sum decomposition and for which every diagonal block is compact.
For every $m$ in $\nat${,} we choose embeddings $\bar V_{m} \to L^{2}(\nat)$ and $V_{m}\to L^{2}(\nat)$. We {obtain} induced
embeddings $\bbK(\bar V_{m})\to \bbK$ and $\bbK(V_{m})\to \bbK$. Forming their product over $\nat$ and the maximal tensor product
we {obtain} a homomorphism
$$ \prod_{m\in \nat} \bbK(\bar V_{m})\otimes_{\max}  \prod_{n\in \nat } \bbK(V_{n})\to \prod_{\nat} \bbK \otimes_{\max} \prod_{\nat} \bbK\ .$$ We define the projection
 $Q$ as the image of $\tilde Q$  under this map.
 Its $K$-theory class $q$ then has the desired properties. 
 \end{proof}

\section{$K$-theory does not preserve compact maps}\label{koprtzehrthrteeh}
In this section, we explain a surprising consequence of the existence of the class described in  \cref{jzlptzrjtzhztrhr} about compact maps in $\EE$. 

{Recall that a map $f:X\to Y$ in a  presentable $\infty$-category $\cC$ is called compact if, for every filtered   diagram $Z:I\to \cC$, there exists the dashed arrow fitting into
\[
\xymatrix{
  \colim_{i \in I} \Map_{\mathcal{C}}(Y, Z_i) \ar[r] \ar[d] 
  & \Map_{\mathcal{C}}(Y, \colim_{i \in I} Z_i) \ar[d] \ar@{-->}[dl] \\
  \colim_{i \in I} \Map_{\mathcal{C}}(X, Z_i) \ar[r] 
  & \Map_{\mathcal{C}}(X, \colim_{i \in I} Z_i)
}\ .
\]}By \cite{budu} we know that the category $\EE$ is dualizable in $\Pr^{\mathrm{L}}_{\st}$. Equivalently,  every $\aleph_1$-compact object $X$ in $\EE$ admits an exhaustion
\[ 
	\colim \left( X_0 \to X_1 \to X_2 \to \dots \right) \simeq X
\]
such that {all} the transition maps are compact. Since $\EE_{\sepa}$ is idempotent-complete and $\EE \simeq \Ind_{\aleph_{1}}(\EE_{\sepa})$ by \cref{hrteoptrhertg1} and \cref{okpherthertgertg}, the $\aleph_1$-compact objects in $\EE$ are given by $\EE_{\sepa}$. 
{Since $\ee(\C)$ is a compact object, the functor $K^{E}$ preserves colimits and is therefore a morphism in $\Pr^{L}_{\st}$. The following shows that it is not a morphism of dualizable stable  presentable $\infty$-categories.} 
\begin{theorem}\label{opjwefgioherjgkljsdfg}
	The functor $K^E: \EE \to {\Mod_{KU}(\Sp)}$ does not preserve compact maps.
\end{theorem}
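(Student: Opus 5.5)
The plan is to argue by contradiction, using the class $q$ of \cref{jzlptzrjtzhztrhr} together with the description of compact maps in $\Mod_{KU}(\Sp)$. There, a map $M\to N$ is compact if and only if it factors through a perfect (compact) $KU$-module. So I will exhibit a compact map $f$ in $\EE$ such that $K^{E}(f)$ cannot factor through any perfect $KU$-module. The obstruction will again be the ``finite rank versus identity matrix'' phenomenon from the proof of \cref{okhpertertger}.

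\textbf{Step 1: replace $q$ by a separable source.} The projection $Q$ is the image of the Kazhdan projection $P\in C^{*}_{\max}(G)$ under the composite homomorphism
$$\tau:C^{*}_{\max}(G)\xrightarrow{\sigma} \prod_{m}\bbK(\bar V_{m})\otimes_{\max}\prod_{n}\bbK(V_{n})\to \prod_{\nat}\bbK\otimes_{\max}\prod_{\nat}\bbK\ .$$
Since $G$ is finitely generated, $X:=\ee(C^{*}_{\max}(G))$ lies in $\EE_{\sepa}$ and is $\aleph_{1}$-compact. By dualizability of $\EE$, I write $X\simeq\colim_{k}X_{k}$ with compact transition maps $X_{k}\to X_{k+1}$. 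The class $[P]\in\pi_{0}K^{E}(X)$ lifts to some $\pi_{0}K^{E}(X_{k})$, because $K^{E}$ preserves colimits ($\ee(\C)$ is compact). Pushing this lift along $X_{k}\to X_{k+1}\to X\xrightarrow{\ee(\tau)}\ee(\prod_{\nat}\bbK)^{\otimes 2}$ recovers $q$. By \cref{okphjzhzjrtjzthr}, the target is $\Pi\otimes_{\max}\Pi$ with $\Pi:=\prod_{\nat}\ee(\C)$.

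\textbf{Step 2: the hypothetical factorization.} Suppose $K^{E}$ preserved compact maps. Then $K^{E}(X_{k})\to K^{E}(X_{k+1})$ factors through a perfect $KU$-module $M$. Hence $q$ is the image of a class in $\pi_{0}M$ under a map $M\to K^{E}(\Pi\otimes_{\max}\Pi)$. By the adjunction \eqref{gwerfwerrewgwergerwgwe}, this map corresponds to a map $b^{E}(M)\to\Pi\otimes_{\max}\Pi$. Here $b^{E}(M)$ is finitely built from $\ee(\C)$, so it is dualizable in $\EE$ with dual $b^{E}(M^{\vee})$, and it lies in the bootstrap class. Therefore
$$\pi_{0}\map_{\EE}(b^{E}(M),\Pi\otimes\Pi)\cong\pi_{0}K^{E}(b^{E}(M^{\vee})\otimes\Pi\otimes\Pi)\ .$$
Since $b^{E}(M^{\vee})$ lies in the Künneth class, and $K^{E}$ is compatible with tensoring by objects of the bootstrap class, this group is a finite extension built from copies of $\pi_{*}K^{E}(\Pi\otimes\Pi)$ indexed by a finite basis of $M$.

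\textbf{Step 3: finite rank.} I now want to show that any class hit through a perfect module has a finite-rank matrix under \eqref{gewrfwerfwergwerg}, which contradicts $q_{m,n}=\delta_{m,n}$. This is the main obstacle. Step 2 only reduces to classes of $K^{E}(\Pi\otimes\Pi)$ itself, and these need not have finite rank, as $q$ shows. So the finiteness has to come from the compactness of $X_{k}\to X_{k+1}$ in $\EE$, not merely from that of $M$.

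My first attempt is to exploit compactness in $\EE$ directly. Write $\Pi\otimes\Pi$ as a filtered colimit in one of the two variables, for instance via the truncations $\Pi\to\bigoplus_{n\le N}\ee(\C)$, or via a suitable ind-presentation of $\prod_{\nat}\bbK$ by separable subalgebras. Then use the dashed lift in the definition of a compact map to show that $X_{k}\to\Pi\otimes\Pi$ factors through a stage on which the $(m,n)$-matrix visibly has finite rank.

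If that fails, the fallback is to use Efimov's characterization: $K^{E}$ preserves compact maps if and only if its right adjoint preserves colimits, i.e.\ $K^{E}$ is strongly continuous. Strongly continuous functors between dualizable categories commute with the ``continuous'' countable products. In $\EE$ these are computed by $\prod^{s}$ by \cref{okphjzhzjrtjzthr}. With this, the Künneth comparison \eqref{fweqrfewrfwergr} for $\Pi$ would be forced to be an equivalence, contradicting \cref{okhpertertger}.
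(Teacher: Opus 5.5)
Your proposal has a genuine gap. It never exhibits a compact morphism of $\EE$ whose image under $K^{E}$ fails to be compact, and as structured it cannot reach a contradiction.

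In Step~2, the only consequence you draw from the hypothesis is that $q$ is hit from a perfect $KU$-module. This is vacuous, as you notice yourself: $q$ is already a map $\ee(\C)\simeq b^{E}(KU)\to\Pi\otimes_{\max}\Pi$, and $KU$ is perfect.

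Your first attempt at Step~3 uses only the compactness of $X_{k}\to X_{k+1}$ in $\EE$. That holds unconditionally, and the attempt never uses the hypothesis on $K^{E}$. So if it succeeded, it would prove that $q$ has a finite-rank matrix, contradicting \cref{jzlptzrjtzhztrhr}. Moreover, the truncations $\Pi\to\bigoplus_{n\le N}\ee(\C)$ form an inverse system, not a colimit presentation of $\Pi$.

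The fallback does not close the gap either. Grant that preserving compact maps amounts to strong continuity. Still, \cref{okphjzhzjrtjzthr} identifies the \emph{categorical} product in $\EE$, which $K^{E}$ preserves anyway as a right adjoint. Strongly continuous functors need not commute with countable products, even between compactly generated categories. For example, take $-\otimes_{R}\Z:\Mod_{R}\to\Mod_{\Z}$ with $R=\Z[x_{1},x_{2},\dots]$: the class of $(x_{n})_{n}$ in $\pi_{0}((\prod_{\nat}R)\otimes_{R}\Z)$ is non-zero but maps to zero in $\prod_{\nat}\Z$. Finally, nothing in your sketch links such a commutation to the Künneth map \eqref{fweqrfewrfwergr}.

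The missing idea is to use $q$ to \emph{prove compactness} of a suitable map in $\EE$, rather than treating $q$ as a class to be obstructed. Take the canonical map $i:\bigoplus_{\nat}\ee(\C)\to\Pi$, whose matrix is $(\delta_{m,n})$. Since $\hom_{\EE}(\bigoplus_{\nat}\ee(\C),\ee(\C))\simeq\Pi$, consider the trace map $\map_{\EE}(\ee(\C),A^{\vee}\otimes_{\max}B)\to\map_{\EE}(A,B)$ for $A=\bigoplus_{\nat}\ee(\C)$ and $B=\Pi$. Its source is $K^{E}(\Pi\otimes_{\max}\Pi)$, its target is $\prod_{\nat\times\nat}KU$, and on $\pi_{0}$ it is the map \eqref{gewrfwerfwergwerg}. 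Hence $q$ maps to $i$, so $i$ is trace-class. It is therefore compact in the dualizable category $\EE$ with compact unit, by \cite[Lemma 3.9]{Ramzi_2026}.

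On the other side, $K^{E}(i)$ is the canonical map $\bigoplus_{\nat}KU\to\prod_{\nat}KU$, whose image on $\pi_{0}$ is $\bigoplus_{\nat}\Z$. Compact maps of $KU$-modules factor through perfect modules, whose homotopy groups are finitely generated. So $K^{E}(i)$ is not compact.

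Your two correct ingredients, the factorization of compact maps through perfect modules and the ``identity versus finite rank'' phenomenon, apply to this map rather than to an exhaustion of $\ee(C^{*}_{\max}(G))$. This is the paper's proof.
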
\begin{proof} 
{We will   use of the following properties of compact maps:}
\begin{enumerate}
	\item \label{bfbwoeoaghhhjqh} Let $\bA$ be a presentably symmetric monoidal category, {that} is dualizable in $\Pr^{\mathrm{L}}_{\st}$ and such that the tensor unit $1$ is compact. Let $A,B$ be objects in $\bA$ and let $ A^\vee = \hom_{\bA}(A,1) $ be the {internal} dual of $A$ in $\bA$. We consider the composite
	\[ 
		\mathrm{tr}: \map_{\bA}(1, A^\vee \otimes B) \xrightarrow{A \otimes -} \map_{\bA}(A, A \otimes A^\vee \otimes B) \xrightarrow{(\ev \otimes \id)_\ast} \map_{\bA}(A, B) \ ,
	\]
	where $A \otimes A^\vee \to 1$ is the evaluation. The image of the above composite (on the level of $\pi_0$) are the so-called trace-class maps. In the given situation, all trace-class maps are compact \cite[Lemma 3.9]{Ramzi_2026}.
	\item\label{vioapbnipaqghjw} Let $\bA$ a compactly generated category and let $f: A \to B$ be a compact map in $\bA$. Then there exists a compact object $T$ and factorization
	\[
	\xymatrix{
		A \ar[rr]^-{f} \ar[dr] & & B \\
		& T \ar[ur] & & \ .
	}
	\]
\end{enumerate}
 
	We first show that for a compact map of $KU$-modules $f: X \to Y$, the image of the induced map $\pi_0 f: \pi_0 X \to \pi_0 Y$ is finitely generated. By \ref{vioapbnipaqghjw}. it suffices to show that for all compact $KU$-modules $T$ the abelian groups $\pi_\ast T $ are finitely generated. This follows immediately from the observation that the groups $\pi_\ast KU$ are finitely generated and that the class of $KU$-modules $S$ such that $\pi_\ast S$ is finitely generated is closed under retracts, cofibers, and loops.
	
	Next, we show that the canonical map 
	$$i: \bigoplus_\nat \ee(\C) \to \prod_\nat \ee(\C)  $$
	is compact. In more detail, $i$ is the map corresponding under the identification
	\[ 
		\pi_0 \map_\EE(\bigoplus_\nat \ee(\C), \prod_\nat \ee(\C))\simeq \pi_0 \prod_{\nat \times \nat}  KU \simeq \prod_{\nat \times \nat} \Z
	\]
	to the class $(q_{m,n})_{(m,n) \in \nat \times \nat}$ satisfying
	$$
		q_{m,n}=\left\{\begin{array}{cc} 1 & m=n\\ 0  &  m\not=n \end{array} \right.\ .
	$$
	Specializing the map $\mathrm{tr}$ from {\ref{bfbwoeoaghhhjqh}.} to $\bA = \EE$, $A = \bigoplus_\nat \ee(\C)$, and $B = \prod_\nat \ee(\C)$, we obtain the map
	\begin{equation}\label{equ: webfghwqaghsg}
		\begin{split}
				K^E(\prod_\nat \ee(\C) \otimes_{\max} \prod_\nat \ee(\C))  = & \map_{\EE}({\ee(\C)}, \prod_\nat \ee(\C) \otimes_{\max} \prod_\nat \ee(\C)) \\
				\xrightarrow{\mathrm{tr}} & 	\map_\EE(\bigoplus_\nat \ee(\C), \prod_\nat \ee(\C)) \\
				\simeq & \prod_{\nat \times \nat}  KU \ .
		\end{split}
	\end{equation}
		On $\pi_0${,} the above composite agrees with the map in \eqref{gewrfwerfwergwerg}. Consequently, \cref{jzlptzrjtzhztrhr} implies that there exists a preimage $q$ of $(q_{m,n})_{(m,n) \in \nat \times \nat}$ under the composite in \eqref{equ: webfghwqaghsg}. This implies that $i$ is trace-class  {and therefore compact.}
		
		Lastly, we show that $K^E(i)$ is not compact. Indeed, $K^E(i)$ is given by the canonical map
		\[ 
			j: \bigoplus_\nat KU \to \prod_\nat KU
		\]
		because $K^E$ preserves coproducts and products. Since the image of $\pi_0(j)$ is not finitely generated, $j$ is clearly not compact. Therefore, $K^E(i) \simeq j$ is not compact.
\end{proof}

\begin{rem} 
	In the above proof, we have established that the canonical morphism
	\[
	i\colon \bigoplus_{\nat} \ee(\C) \longrightarrow \prod_{\nat} \ee(\C)
	\]
	in $\EE$ is compact. This behavior is unexpected because $i$ is a pure monomorphism in the sense of \cite[Appendix E]{Efimov:2024aa}, meaning that the  connecting map
	\[
	\Fib(i) \longrightarrow \bigoplus_{\nat} \ee(\C)
	\]
	is a phantom morphism. {
	Knowing that a compact split monomorphism has a compact domain,
	 one might naturally expect  that any compact pure monomorphism $f\colon A \to B$ in a dualizable category $\bA$ still has a compact domain $A$. However, since   $\bigoplus_{\nat} \ee(\C)$ is not compact, the morphism $i$ provides a direct counterexample to this expectation.} \hB
\end{rem}

    \bibliographystyle{alpha}
\bibliography{forschung2021}

\begin{thebibliography}{RSW25}

\bibitem[BD24]{budu}
U.~Bunke and B.~Duenzinger.
\newblock ${E}$-theory is compactly assembled.
\newblock
  \href{https://arxiv.org/pdf/2402.18228.pdf}{https://arxiv.org/pdf/2402.18228.pdf},
  2024.

\bibitem[BE20]{buen}
U.~Bunke and A.~Engel.
\newblock {\em Homotopy theory with bornological coarse spaces}, volume 2269 of
  {\em Lecture Notes in Math.}
\newblock Springer, 2020.
\newblock \href{https://arxiv.org/abs/1607.03657}{arXiv:1607.03657}.

\bibitem[BEL]{KKG}
U.~Bunke, A.~Engel, and M.~Land.
\newblock A stable $\infty$-category for equivariant $\mathrm{K\!K}$-theory.
\newblock \href{https://arxiv.org/pdf/2102.13372.pdf}{arxiv:2102.13372}.

\bibitem[Bun24]{bunke:2023aa}
U.~Bunke.
\newblock {{\(KK\)}}- and {{\(E\)}}-theory via homotopy theory.
\newblock {\em Orbita Math.}, 1(2):103--210, 2024.

\bibitem[CH90]{zbMATH04182148}
A.~Connes and N.~Higson.
\newblock Deformations, asymptotic morphisms and bivariant {{\(K\)}}-theory.
\newblock {\em C. R. Acad. Sci., Paris, S{\'e}r. I}, 311(2):101--106, 1990.

\bibitem[Cis19]{Cisinski:2017}
D.-Ch. Cisinski.
\newblock {\em Higher categories and homotopical algebra}, volume 180 of {\em
  Cambridge studies in advanced mathematics}.
\newblock Cambridge University Press, 2019.
\newblock \url{http://www.mathematik.uni-regensburg.de/cisinski/CatLR.pdf}.

\bibitem[Cun87]{MR899916}
Joachim Cuntz.
\newblock A new look at {$KK$}-theory.
\newblock {\em $K$-Theory}, 1(1):31--51, 1987.

\bibitem[DJ25]{Datta:2025aa}
A.~Datta and M.~Joachim.
\newblock Equivariant $kk$-theory and model categories.
\newblock
  \href{https://arxiv.org/pdf/2506.16238.pdf}{https://arxiv.org/pdf/2506.16238.pdf},
  06 2025.

\bibitem[Efi24]{Efimov:2024aa}
A.~I. Efimov.
\newblock K-theory and localizing invariants of large categories.
\newblock
  \href{https://arxiv.org/pdf/2405.12169.pdf}{https://arxiv.org/pdf/2405.12169.pdf},
  05 2024.

\bibitem[Efi25]{Efimov_2025}
Alexander~I. Efimov.
\newblock Rigidity of the category of localizing motives, 2025.

\bibitem[GHT00]{Guentner_2000}
E.~Guentner, N.~Higson, and J.~Trout.
\newblock Equivariant ${E}$-theory for ${C}^{*}$-algebras.
\newblock {\em Memoirs of the American Mathematical Society}, 148(703):0--0,
  2000.

\bibitem[Hig87]{higsondiss}
N.~Higson.
\newblock {A characterization of {KK}-theory}.
\newblock {\em Pacific J.\ Math.}, 126:253--276, 1987.

\bibitem[Hig90a]{MR1068250}
N.~Higson.
\newblock Categories of fractions and excision in {$KK$}-theory.
\newblock {\em J. Pure Appl. Algebra}, 65(2):119--138, 1990.

\bibitem[Hig90b]{higson}
N.~Higson.
\newblock Categories of fractions and excision in {$KK$}-theory.
\newblock {\em J. Pure Appl. Algebra}, 65(2):119--138, 1990.

\bibitem[Kas88]{kasparovinvent}
G.~G. Kasparov.
\newblock {Equivariant $K\!K$-theory and the Novikov conjecture}.
\newblock {\em Invent.\ Math.}, 91(1):147--201, 1988.

\bibitem[LN18]{LN}
M.~Land and T.~Nikolaus.
\newblock {On the relation between K- and L-theory of $C^*$-algebras}.
\newblock {\em Math. Ann.}, 371:517--563, 2018.

\bibitem[Lur26]{kerodon}
Jacob Lurie.
\newblock Kerodon.
\newblock \url{https://kerodon.net}, 2026.

\bibitem[MN06]{MR2193334}
R.~Meyer and R.~Nest.
\newblock The {B}aum--{C}onnes conjecture via localisation of categories.
\newblock {\em Topology}, 45(2):209--259, 2006.

\bibitem[Oza03]{Ozawa_2003}
N.~Ozawa.
\newblock An application of expanders to $b(\ell^{2})\otimes b(\ell^{2})$.
\newblock {\em Journal of Functional Analysis}, 198(2):499--510, March 2003.

\bibitem[Ram26]{Ramzi_2026}
M.~Ramzi.
\newblock Locally rigid $\infty$-categories.
\newblock
  \href{https://arxiv.org/abs/2410.21524}{https://arxiv.org/abs/2410.21524},
  2026.

\bibitem[RSW25]{RSW}
M.~Ramzi, V.~Sosnilo, and C.~Winges.
\newblock Every motive is the motive of a stable $\infty$-category.
\newblock
  \href{https://arxiv.org/abs/2503.11338}{https://arxiv.org/abs/2503.11338},
  2025.

\bibitem[Uuy13]{Uuye:2010aa}
O.~Uuye.
\newblock {Homotopy algebra for $C^{*}$-algebras}.
\newblock {\em J.\ Noncommut.\ Geom.}, 7(4):981--1006, 2013.

\bibitem[WY12]{Willett_2012}
R.~Willett and G.~Yu.
\newblock Higher index theory for certain expanders and {G}romov monster
  groups, i.
\newblock {\em Advances in Mathematics}, 229(3):1380--1416, February 2012.

\end{thebibliography}

\end{document}